
\documentclass{ims9x6}
\usepackage{amsmath,mathtools}
\makeindex

\makeatletter

\makeatother

\begin{document}

\setcounter{page}{13}  

\chapter{PATRICIA BRIDGES}

\markboth{Steve Evans \& Anton Wakolbinger}{PATRICIA bridges} 

%

\author{Steven N. Evans}
\address{Department of Statistics\\
         University  of California\\ 
         Berkeley, CA 94720-3860, U.S.A. \\
         evans@stat.berkeley.edu}

\author{Anton Wakolbinger}
\address{Institut f\"ur Mathematik \\
         Goethe-Universit\"at \\
         60054 Frankfurt am Main, Germany\\
         wakolbinger@math.uni-frankfurt.de}
         



\begin{abstract} A {\em radix sort tree} arises when storing distinct infinite binary words in the leaves of a binary tree such that for any two words their common prefixes  coincide with the common prefixes of the  corresponding two leaves. If one deletes the out-degree $1$ vertices in the radix sort tree and ``closes up the gaps'', then the resulting {\em PATRICIA tree} maintains all the information that is necessary for sorting
the infinite words into lexicographic order. 
 
We investigate the  PATRICIA chains
-- the tree-valued Markov chains that arise when successively building the PATRICIA trees for the collection of infinite binary words 
$Z_1,\ldots, Z_n$, $n=1,2,\ldots$, 
where the source words $Z_1, Z_2,\ldots$ are independent and have a common diffuse distribution on $\{0,1\}^\infty$. 
It turns out that the PATRICIA chains share a common collection of backward transition probabilities and that these are the same
as those of a chain introduced by R\'emy for successively generating uniform random binary trees with larger and larger numbers of leaves.
This means that the infinite bridges of any PATRICIA chain (that is, the chains obtained by conditioning a PATRICIA chain on its remote future)
coincide with the infinite bridges of the R\'emy chain.
The infinite bridges of the R\'emy chain are characterized concretely in Evans, Gr\"ubel, and Wakolbinger 2017
and we recall that characterization here while adding some details and clarifications.\end{abstract}

\section{Introduction}
\label{S:intro}
We first fix some notation. Denote by $\{0,1\}^\star:=\bigsqcup_{k=0}^\infty\{0,1\}^k$ the set of finite tuples
or {\em words} drawn from the alphabet $\{0,1\}$ (with the empty word $\emptyset$
allowed) -- the symbol $\bigsqcup$ emphasizes that this is a disjoint union.  
Write an $\ell$-tuple $v=(v_1, \ldots, v_\ell) \in \{0,1\}^\star$ more simply as
$v_1 \ldots v_\ell$ and set $|v| = \ell$. 

Define a partial order on $\{0,1\}^\star \cup \{0,1\}^\infty$ by declaring that
$u < v$ if and only if $u$ is a strict prefix of $v$; that is, $u < v$ if and only if
$u = u_1 \ldots u_k$ for some $k \ge 0$, $v$ is of the form $v_1 \ldots v_\ell \in \{0,1\}^\star$
where $k < \ell$ or $v_1 v_2 \ldots \in \{0,1\}^\infty$, and $v_1 \ldots v_k = u_1 \ldots u_k$.
The empty word is the unique minimal element for this partial order.

A {\em  rooted binary tree} (or {\em binary tree} for short)
is a non-empty  subset $\mathbf{t}$ of 
$\{0,1\}^\star$ with the property that if $v \in \mathbf{t}$
and $u \in \{0,1\}^\star$ is such that $u < v$, then $u \in \mathbf{t}$.
The vertex $\emptyset$ (that is, the empty word)
belongs to any such tree $\mathbf{t}$ and
is the {\em root} of $\mathbf{t}$. For a finite rooted binary tree $\mathbf{t}$ the {\em leaves} of $\mathbf{t}$ are the maximal elements of $\mathbf{t}$
for the partial order $<$.
A finite rooted binary tree is uniquely determined by its
leaves: it is the smallest rooted binary tree that contains the set of leaves and it
consists of the leaves and the elements $u \in \{0,1\}^\star$ such that 
$u < v$ for some leaf $v$. 

Suppose now that $z_1, \ldots, z_n \in \{0,1\}^\infty$ are distinct infinite binary words.  For each $i \in [n]$ we may construct a finite binary word $y_i$ that is an initial segment of $z_i$ such that
$y_1, \ldots, y_n$ are the distinct leaves of a binary tree and $y_1, \ldots, y_n$ are the minimal length words with this property.  The resulting binary tree is called the {\em radix sort tree} defined by the infinite words
$z_1, \ldots, z_n$: a depth first search of this tree visits the leaves in an order that coincides with the lexicographic order of the corresponding infinite words.
The radix sort tree stores more information than is necessary for the purpose of sorting the infinite binary words into lexicographic order. 
More precisely, if one deletes the out-degree $1$ vertices in the radix sort tree and ``closes up the gaps'', then a depth first search of the resulting
{\em PATRICIA tree} still visits the leaves in an order that coincides with the lexicographic order of the corresponding infinite words.  
PATRICIA is
an acronym for ``Practical Algorithm To Retrieve Information Coded In Alphanumeric''.
PATRICIA trees were invented independently in \cite{Mor68} \cite {Gwe68}. 
Note that a PATRICIA tree is a 
{\em full binary tree}: each non-leaf vertex of the tree has two children; that is, if the finite binary word $v = v_1 \ldots v_m$ is a vertex of the tree that is not a leaf,
then both of the words $v_1 \ldots v_m 0$ and $v_1 \ldots v_m 1$ are also vertices of the tree.

Suppose now that $Z_1, Z_2, \ldots$ is an infinite i.i.d. sequence of random elements of $\{0,1\}^\infty$ with common diffuse distribution $\nu$.  Let $\prescript{\nu}{}{\bar R}_n$, $n \in \mathbb{N}$, be
the PATRICIA tree constructed from $Z_1, \ldots, Z_n$.  We show in Proposition~\ref{P:Markov_property} that $(\prescript{\nu}{}{\bar R}_n)_{n \in \mathbb{N}}$ is a Markov chain which we call a PATRICIA chain. 
Features
of PATRICIA trees for random inputs were first studied in \cite{MR0445948}
and this topic has since been the subject of quite a large literature (see, for example,
\cite{Leckey_thesis}, \cite{MR3741547}, \cite{MR3429094}, \cite{MR3517923}, \cite{MR3248350}, \cite{MR3247579}, \cite{MR3192399},  \cite{MR2131826},   \cite{MR1932678}, \cite{MR1910516}, \cite{MR1871557}, \cite{MR1215228}, \cite{MR1192779},  \cite{MR1151362}, \cite{MR1083648}, \cite{MR1022654}).

Our aim in this paper is to characterize the
infinite bridges of the PATRICIA chains; that is, for each $\nu$ we wish to characterize the family of Markov chains that have the same backward transition probabilities as 
$(\prescript{\nu}{}{\bar R}_n)_{n \in \mathbb{N}}$.  
As we observe in Proposition~\ref{P:Markov_property}, the backward transition probabilities of $(\prescript{\nu}{}{\bar R}_n)_{n \in \mathbb{N}}$ are the same for all $\nu$ and this common
backward transition mechanism may be described as follows: pick a leaf uniformly at random, delete it and its sibling, and close up the gap in the tree if there is one.
It follows that all the PATRICIA chains have the same infinite bridges.  

Before we proceed, we need to recall the R\'emy chain of \cite{MR803997} (see, also, \cite{MR1331596}, \cite{MR2042386}, \cite{MR1714707}).
The state space of this chain is also the set of full binary trees.
Writing $\aleph$ for the full binary tree with two leaves, the R\'emy chain evolves forwards in time as follows:
\begin{itemize}
\item
Pick a
vertex $v$ uniformly at random.
\item
Cut off the subtree rooted at $v$ and set it aside.
\item
Attach a copy of the tree $\aleph$ to the end of the edge
that previously led to $v$.
\item
Re-attach the subtree that was rooted at $v$ in  uniformly at random
to one of the two leaves in the copy of $\aleph$.
\end{itemize}
The R\'emy chain started with the trivial tree consisting of a single vertex has the important feature that after $n$ steps it
is uniformly distributed over the set of full binary trees with $n$ leaves.
The backward transition probabilities of the R\'emy chain were identified in \cite{MR3601650}
and they coincide with those of the PATRICIA chains.  It follows that
the common family of infinite bridges for the PATRICIA chains is the same as that of the R\'emy chain and the latter was determined in \cite{MR3601650}.

An outline of the remainder of the paper is as follows.

In Section~\ref{S:trees} we introduce some notation and recall more formally the construction of a radix sort tree and
a PATRICIA tree from a set of infinite binary words.  In
Section~\ref{S:chains} we begin considering the stochastic processes built by applying the radix sort tree and PATRICIA
tree constructions to sequences of independent identically distributed infinite binary words with common diffuse 
distribution $\nu$.  We observe that these processes are Markov, show that all the radix sort chains have the same
family of backward transition probabilities, and demonstrate 
that the same is true of the PATRICIA chains. We also investigate the
issue that different diffuse probabilities $\nu$ can give rise to the same PATRICIA chain.

Section~\ref{S:bridges} initiates the study of infinite bridges {\em per se}.  
Since both in the case of the radix sort chains and of the PATRICIA chains the backward transition probabilities are the same for all $\nu$, we concentrate on the case where
$\nu$ is fair coin-tossing measure.  We also recall that the distribution of any infinite bridge for
either the radix sort chain or the PATRICIA chain will be a mixture of extremal infinite bridge distributions
and that an infinite bridge is extremal if and only if it has an almost surely trivial tail $\sigma$-field, so it suffices 
for the determination of the distributions of general infinite bridges to characterize the extremal ones.
We recall from \cite{MR3734107} that the extremal infinite bridges for the radix sort chain are nothing other than
the radix sort chains with general diffuse input distribution $\nu$.  By the observation that all the PATRICIA chains have
the same backward transition probabilities, it follows that a PATRICIA chain with general diffuse input distribution $\nu$ is
an infinite bridge for our ``reference'' PATRICIA chain with fair coin-tossing input distribution.  Because of the
characterization of the extremal infinite bridges for the radix sort chain that we have just recalled and
the fact that if $\Phi$ is the ``delete the out-degree $1$ vertices'' map that turns a 
binary tree into a full binary tree and $(R_n^\infty)_{n \in \mathbb{N}}$ is an infinite bridge
for the radix sort chain, then $(\Phi(R_n^\infty))_{n \in \mathbb{N}}$ is an infinite bridge
for the PATRICIA chain (see Proposition~\ref{P:bridges_to_bridges}), one might expect that such infinite PATRICIA bridges 
exhaust the collection of
extremal infinite PATRICIA bridges.  We present an example to show that this is not the case.  Rather, to describe
the totality of the extremal infinite PATRICIA bridges we need to recall from \cite{MR3601650} the description of
the extremal infinite bridges for the R\'emy chain because, as we have already noted, these two families coincide.

This retelling takes up the remainder of the paper with the exception of
the brief interpolation Section~\ref{S:heights} where we present some examples
of infinite PATRICIA bridges and give an indication of how differently such
chains can behave by examining the asymptotic growth of the successive trees
they produce.

A crucial element of the development in \cite{MR3601650} was the introduction of the notion of a {\em didendritic system}
with a given label set.
This class of objects is a generalization of the class of finite leaf-labeled full binary trees that includes objects with 
infinitely many ``leaves''.
Section~\ref{S:DDS} contains a review of some facts about didendritic systems.  In particular, we show that 
the class of didendritic systems with finite label sets is in a natural bijective correspondence 
with the class of finite leaf-labeled full binary trees.  
This fact was claimed in \cite{MR3601650} with a slightly different definition of didendritic system
but we show by an example that under the axioms in \cite{MR3601650} this assertion is false.  
We stress that this
gap does not illegitimate the further development in \cite{MR3601650} 
which does not depend on the details of the definition
of a didendritic system but only on the two facts that a finite didendritic system is effectively a finite leaf-labeled
binary tree and that the class of didentritic systems is closed under projective limits in a natural way -- both of which
are true for our new definition.  We also 
develop in an explicit manner an alternative description of the class of finite didendritic systems that was
somewhat implicit in \cite{MR3601650}.  This alternative description, which is obtained in Proposition~\ref{P:left_right}, is crucial for the later exposition.

We recall in Section~\ref{S:bridges_DDS} that there is a bijective correspondence between infinite PATRICIA (equivalently,
R\'emy) bridges and random didentric systems with label set $\mathbb{N}$ that are exchangeable in a natural sense.  Moreover, 
the extremal infinite PATRICIA bridges correspond to the exchangeable random didendritic systems with label set $\mathbb{N}$ that
are ergodic, where ergodicity is equivalent to the property that the random didendritic systems induced on disjoint subsets
of $\mathbb{N}$ are independent.  In order to characterize the class consisting of all PATRICIA bridges it therefore suffices
to determine the class of ergodic exchangeable random didendritic systems.  We recall the concrete representation
 of the latter from \cite{MR3601650} in Section~\ref{S:DDS_from_real_tree}.  The ingredients of this representation are
a rooted complete separable $\mathbb{R}$-tree, a diffuse probability measure on the $\mathbb{R}$-tree, and a possibly random mechanism 
for giving a left-versus-right ordering to the tree built by sampling countably many points from the $\mathbb{R}$-tree according to the given probability measure.

Finally, we note that an alternative concrete representation of extremal infinite R\'emy bridges has recently been given in \cite{Ger18} as part of a program that extends the study of Markov chains with  R\'emy-like transition probabilities to classes of discrete structures other than binary trees; for example, \cite{Ger18}
also considers the infinite bridges investigated in \cite{MR3851828} that
are similar to R\'emy or PATRICIA infinite bridges but vertices of the successive trees may have
more than two offspring and there is no left-to-right ordering of
offspring.

\section{Radix sort trees and PATRICIA trees}
\label{S:trees}
For a finite rooted binary tree $\mathbf{t}$, we write  $\mathbf{L}(\mathbf{t})$ for the set of leaves of $\mathbf{t}$.
For $y_1, \ldots, y_m \in \{0,1\}^\star$, write
\[
\mathbf T(y_1, \ldots, y_m) := \bigcup_{j=1}^m \{u \in \{0,1\}^\star : u \le y_j\}
\]
for the smallest finite rooted binary tree
containing $y_1, \ldots, y_m \in \{0,1\}^\star$; the leaves of this tree form a subset
of $\{y_1, \ldots, y_m\}$ and this subset is proper if and only if $y_i < y_j$ for some
pair $1 \le i \ne j \le m$.

A collection $z_1, \ldots, z_n$ of distinct elements of $\{0,1\}^\infty$ determines
a finite rooted binary tree in the following manner. For $1 \le i \le n$ let
$y_i$ be the minimal length prefix of $z_i$ that differs from all prefixes of
$z_j$, $j \ne i$.  The words $y_1, \ldots, y_n$ are distinct and, moreover, they
are incomparable for the partial order $<$.  
The {\em radix sort tree determined by the input} $z_1, \ldots, z_n$ is the
finite rooted binary tree $\mathbf R(z_1, \ldots, z_n)$ with leaves $y_1, \ldots, y_n$; that is,
\[
\mathbf R(z_1, \ldots, z_n) := \mathbf T(y_1, \ldots, y_n).
\]

\begin{remark}
\label{R:permutation_invariance}
Observe that  
\[
\mathbf R(z_1, \ldots, z_n) = \mathbf R(z_{\sigma(1)}, \ldots, z_{\sigma(n)})
\]
for any permutation $\sigma$ of $[n]$.
\end{remark}

\begin{notation}
Denote by $\mathbb{S}$ the class of binary trees that can arise as radix sort trees.
A binary tree $\mathbf{s}$ belongs to $\mathbb{S}$ if and only if $\mathbf{s} = \{\emptyset\}$ or $\mathbf{s}$ has at least two leaves
and for any leaf $u_1 \ldots u_p \in \mathbf{s}$ the word $u_1 \ldots u_{p-1} \bar u_p$ is also
a vertex of $\mathbf{s}$, where we set $\bar 0 := 1$ and $\bar 1 := 0$.  
Write $\mathbb{S}_n$, $n \in \mathbb{N}$, for the elements of $\mathbb{S}$ with $n$ leaves
(in particular, $\mathbb{S}_1$ contains only the trivial tree
with the single vertex $\{\emptyset\}$).
\end{notation}

One of the reasons for constructing the radix sort tree for a set of inputs
$z_1, \ldots, z_n$ is that if $y_1, \ldots, y_n$ are the leaves of the tree
(where the indexing is such that $y_i < z_i$ for $1 \le i \le n$), then
the lexicographic ordering of $z_1, \ldots, z_n$ is the same as that of $y_1, \ldots, y_n$;
that is, 
$\sigma$ is the unique permutation of $[n]$ such that $z_{\sigma(i)}$, $i \in [n]$,
are increasing in the lexicographic order 
if and only if
$\sigma$ is the unique permutation of $[n]$ such that $y_{\sigma(i)}$, $i \in [n]$,
are increasing in the lexicographic order.  We now describe another procedure for associating
a finite set of inputs with the leaves of a finite binary tree that shares this feature but
typically uses a finite binary tree with fewer vertices. 

\begin{notation}
Denote by $\bar {\mathbb{S}}$ the set of finite binary tree for which all vertices have out-degree $2$ or $0$.
That is, $\bar {\mathbb{S}}$ consists of finite binary trees $\mathbf{t}$ such that if $v \in \mathbf{t}$, then either
$v \in \mathbf{L}(\mathbf{t})$ or both $v0 \in \mathbf{t}$ and $v1 \in \mathbf{t}$.  We call the elements of $\bar {\mathbb{S}}$
{\em full finite binary trees}.
Write $\bar {\mathbb{S}}_n$, $n \in \mathbb{N}$, for the elements of $\bar {\mathbb{S}}$ with $n$ leaves
(in particular, $\bar {\mathbb{S}}_1$ contains only the trivial tree
with the single vertex $\{\emptyset\}$).
Note that $\#\mathbb{S}_n = C_{n-1}$, where $C_m := \frac{1}{m+1} \binom{2m}{m}$ is
the $m^{\mathrm{th}}$ {\em Catalan number} (see, for example, Bijective Exercise 7 of \cite{MR3467982}).
\end{notation}

\begin{definition}
The {\em PATRICIA contraction} is a map $\Phi: \mathbb{S} \to \bar {\mathbb{S}}$.  
It maps $\mathbb{S}_n$ onto $\bar {\mathbb{S}}_n$, $n \in \mathbb{N}$.  It is defined
recursively as follows.  For $n=1$, put $\Phi(\{\emptyset\}) := \emptyset$.
Now consider $\mathbf{s} \in \mathbb{S}_n$ for $n\ge 2$.  There is a unique
maximal $m$ and $u_1 \ldots u_m \in \{0,1\}^m$ such that 
$u_1 \ldots u_m < y$ for every leaf $y \in \mathbf{L}(\mathbf{s})$.  We have a decomposition
\[
\mathbf{s} = \{ \{\emptyset\}, u_1, u_1 u_2, \ldots, u_1 u_2 \ldots u_m\}
\sqcup u_1 \ldots u_m 0 \mathbf{s}^{(0)}
\sqcup u_1 \ldots u_m 1 \mathbf{s}^{(1)}
\]
for two binary trees $\mathbf{s}^{(0)}, \mathbf{s}^{(1)} \in \mathbb{S}$ that both have fewer leaves than $\mathbf{s}$.
Put
\[
\Phi(\mathbf{s}) := \{\emptyset\} \sqcup 0 \Phi(\mathbf{s}^{(0)}) \sqcup 1 \Phi(\mathbf{s}^{(1)}).
\]
\end{definition}
\begin{figure}
\centering
\includegraphics[height=3cm]{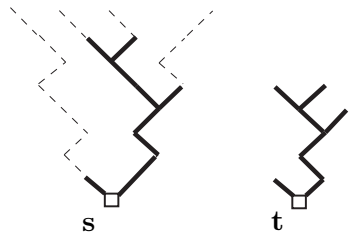}
\caption
{Initial segments of 4 elements $z_1,z_2,z_3,z_4 \in \{0,1\}^\infty $, the radix sort tree $\mathbf s = \mathbf R(z_1,z_2,z_3,z_4)$ determined by $z_1,z_2,z_3,z_4$  and its PATRICIA contraction $\mathbf t = \mathbf \Phi(\mathbf s)$.}
\label{radixandPAT}
\end{figure}
\begin{remark}
The PATRICIA contraction of $\mathbf{s} \in \mathbb{S}$ is the unique  $\mathbf{t} \in \bar {\mathbb{S}}$
with the following properties.
\begin{itemize}
\item
Writing $\mathbf{S}(\mathbf{s})$ (respectively, $\mathbf{S}(\mathbf{t})$) for the vertices of $\mathbf{s}$ (respectively, $\mathbf{t}$) with
out-degree $2$,
there is a bijective correspondence between $\mathbf{S}(\mathbf{s})$ and $\mathbf{S}(\mathbf{t})$ and 
and a bijective correspondence between
$\mathbf{L}(\mathbf{s})$ and $\mathbf{L}(\mathbf{t})$.
\item
Suppose that $w,x \in \mathbf{s}$ and $y,z \in \mathbf{t}$ are such that $w$ corresponds to $y$
and $x$ corresponds to $z$.  Then $w = u_1 \ldots u_p$ and 
$x = u_1 \ldots u_p u_{p+1} \ldots u_q$ for some $u_1, \ldots, u_q \in \{0,1\}$
with $u_{p+1} =0$ (resp. $u_{p+1}=1$) if and only if $y = v_1 \ldots v_m$ and 
$z = v_1 \ldots v_m v_{m+1} \ldots v_n$ for some $v_1, \ldots, v_n \in \{0,1\}$
with $v_{m+1} =0$ (resp. $v_{m+1}=1$).
\end{itemize}
\end{remark}

\section{Radix sort chains and PATRICIA chains}
\label{S:chains}
\begin{definition}
Given a diffuse probability measure $\nu$ on $\{0,1\}^\infty$, let $(Z_n)_{n \in \mathbb{N}}$
be a sequence of independent identically distributed $\{0,1\}^\infty$-valued random
variables with common distribution $\nu$.  The corresponding {\em radix sort chain}
$(\prescript{\nu}{}{R}_n)_{n \in \mathbb{N}}$ is defined by 
$\prescript{\nu}{}{R}_n := \mathbf R(Z_1, \ldots, Z_n)$, $n \in \mathbb{N}$,
and the corresponding {\em PATRICIA chain} $(\prescript{\nu}{}{\bar R}_n)_{n \in \mathbb{N}}$
is defined by $\prescript{\nu}{}{\bar R}_n = \Phi(\prescript{\nu}{}{R}_n)$, $n \in \mathbb{N}$.
Setting $\gamma := \pi^\infty$, where
$\pi(\{0\}) = \pi(\{1\}) = \frac{1}{2}$, we will write
$R_n := \prescript{\gamma}{}{R}_n$
and
$\bar R_n := \prescript{\gamma}{}{\bar R}_n$.
\end{definition}

The processes $(\prescript{\nu}{}{R}_n)_{n \in \mathbb{N}}$ and $(\prescript{\nu}{}{\bar R}_n)_{n \in \mathbb{N}}$
are indeed Markovian.  This observation does not seem to have appeared explicitly in the literature except
in \cite{MR3734107} for the former process.  We will state this formally along with a description of
the respective backward transition probabilities, but we first need some notation.  

\begin{notation}
Consider $\mathbf{t} \in \mathbb{S}_{n+1}$ and let $v = v_1 \ldots v_m$ be a leaf of $\mathbf{t}$.  

Suppose first that $v_1 \ldots v_{m-1} \bar v_m$ 
is not a leaf of $\mathbf{t}$. Let $\kappa(\mathbf{t},v) \in \mathbb{S}_n$ be the tree $\mathbf{t} \setminus \{v\}$.
That is, $\kappa(\mathbf{t}, v)$ is the tree with the same leaf set as  $\mathbf{t}$ except that $v$ has been removed.

On the other hand, suppose that $v_1 \ldots v_{m-1} \bar v_m$ is also a leaf of $\mathbf{t}$.
There is a largest $\ell < m$ such that
$v_1 \ldots v_\ell$ and $v_1 \ldots v_{\ell-1} \bar v_\ell$ are both  vertices of $\mathbf{t}$.
In this case let $\kappa(\mathbf{t},v) \in \mathbb{S}_n$ be the tree 
$\mathbf{t} \setminus (\{v_1 \ldots v_p : \ell+1 \le p \le m\} \cup \{v_1 \ldots v_{m-1} \bar v_m\})$
That is, $\kappa(\mathbf{t}, v)$ is the tree with the same leaf set as  $\mathbf{t}$ except that the leaf $v$ 
and its sibling leaf $v_1 \ldots v_{m-1} \bar v_m$ have both been removed and replaced by 
the single leaf $v_1 \ldots v_\ell$.
\end{notation}

\begin{remark}
If $\mathbf{t} = \mathbf R(z_1, \ldots, z_{n+1})$ for distinct $z_1, \ldots, z_{n+1}$ and $y_{n+1}$ is the
leaf of $\mathbf{t}$ corresponding to the input $z_{n+1}$, then
$\kappa(\mathbf{t}, y_{n+1}) = \mathbf R(z_1, \ldots, z_n)$.
\end{remark}
\begin{notation}
Consider $\bar {\mathbf{t}} \in \bar {\mathbb{S}}_{n+1}$ and let $v = v_1 \ldots v_m$ be a leaf of $\bar {\mathbf{t}}$. 
Let \mbox{$\bar \kappa(\bar {\mathbf{t}},v) \in \bar {\mathbb{S}}_n$} be the tree
$(\bar {\mathbf{t}} \setminus \{w \in \bar {\mathbf{t}} : v_1 \ldots v_{m-1} < w\}) \cup \{w_1 \ldots w_{m-1} w_{m+1} \ldots w_p : w \in \bar {\mathbf{t}}, \, v_1 \ldots v_{m-1} \bar v_m \le w_1 \ldots w_p\}$.
\end{notation}
\begin{remark}
If $\bar {\mathbf{t}} = \Phi \circ \mathbf R(z_1, \ldots, z_{n+1})$ for distinct $z_1, \ldots, z_{n+1}$ and $y_{n+1}$ is the
leaf of $\bar {\mathbf{t}}$ corresponding to the input $z_{n+1}$, then
$\bar \kappa(\bar {\mathbf{t}}, y_{n+1}) = \Phi \circ \mathbf R(z_1, \ldots, z_n)$.
\end{remark}
\begin{remark}\label{R:Remy}
Note that $\bar \kappa(\bar {\mathbf{t}}, v)$ is the tree obtained from $\bar {\mathbf{t}}$ by deleting $v$ and its sibling and closing up the resulting
gap if there is one (there will be a gap if and only if the sibling of $v$ is not a leaf). This operation is the same as one that appears in the backward transition of the R\'emy chain, and indeed, as part (ii) of the next proposition shows, the common backward transition probabilities of the PATRICIA chains are the same as that of the R\'emy chain described at the beginning of 
Section~4 of \cite{MR3601650}.
\end{remark}

\begin{proposition}
\label{P:Markov_property}
\begin{itemize}
\item[(i)]
The process $(\prescript{\nu}{}{R}_n)_{n \in \mathbb{N}}$ is Markov.  
For $\mathbf{s} \in \mathbb{S}_n$ such that $\mathbb{P}\{\prescript{\nu}{}{R}_n = \mathbf{s}\} > 0$ and
$\mathbf{t} \in \mathbb{S}_{n+1}$ such that $\mathbb{P}\{\prescript{\nu}{}{R}_{n+1} = \mathbf{t}\} > 0$
the associated backward transition probability is
\[
\mathbb{P}\{\prescript{\nu}{}{R}_n = \mathbf{s} \, | \, \prescript{\nu}{}{R}_{n+1} = \mathbf{t}\}
=
\begin{cases}
\frac{1}{n+1},& \text{if $\mathbf{s} = \kappa(\mathbf{t},v)$ for some $v \in \mathbf{L}(\mathbf{t})$,} \\
0,& \text{otherwise.} \\
\end{cases}
\]
\item[(ii)]
The process $(\prescript{\nu}{}{\bar R}_n)_{n \in \mathbb{N}}$ is Markov.  
For $\bar {\mathbf{s}} \in \bar {\mathbb{S}}_n$ such that $\mathbb{P}\{\prescript{\nu}{}{\bar R}_n = \bar {\mathbf{s}}\} > 0$ and
$\bar {\mathbf{t}} \in \bar {\mathbb{S}}_{n+1}$ such that $\mathbb{P}\{\prescript{\nu}{}{\bar R}_{n+1} = \bar {\mathbf{t}}\} > 0$
the associated backward transition probability is
\[
\mathbb{P}\{\prescript{\nu}{}{\bar R}_n = \bar {\mathbf{s}} \, | \, \prescript{\nu}{}{\bar R}_{n+1} = \bar {\mathbf{t}}\}
=
\begin{cases}
\frac{1}{n+1},& \text{if } \bar {\mathbf{s}} = \bar {\kappa}(\bar {\mathbf{t}},v) \text{ for some } v \in \mathbf{L}(\bar {\mathbf{t}}), \\
0,& \text{otherwise.} \\
\end{cases}
\]
\end{itemize}
\end{proposition}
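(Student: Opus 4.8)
The plan is to prove parts~(i) and~(ii) together: they have identical structure, with $\prescript{\nu}{}{R}$ and $\kappa$ in~(i) playing the roles of $\prescript{\nu}{}{\bar R}$ and $\bar\kappa$ in~(ii), and the only ingredients are Remark~\ref{R:permutation_invariance} (together with its evident analogue $\Phi\circ\mathbf R(z_1,\ldots,z_n)=\Phi\circ\mathbf R(z_{\sigma(1)},\ldots,z_{\sigma(n)})$), the two consistency remarks asserting $\kappa(\mathbf R(z_1,\ldots,z_{n+1}),y_{n+1})=\mathbf R(z_1,\ldots,z_n)$ and $\bar\kappa(\Phi\circ\mathbf R(z_1,\ldots,z_{n+1}),y_{n+1})=\Phi\circ\mathbf R(z_1,\ldots,z_n)$ (with $y_{n+1}$ the leaf corresponding to the input $z_{n+1}$), and the exchangeability of $(Z_n)$. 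I would establish the Markov property through the \emph{backward} transitions rather than the forward ones: the forward conditional law of $\prescript{\nu}{}{R}_{n+1}$ given $\prescript{\nu}{}{R}_n$ is awkward to write down for a general diffuse $\nu$ (it involves how far beyond the current leaves the inputs have been resolved), whereas the backward direction needs only exchangeability. Everything rests on a single claim: \emph{conditionally on the entire future $(\prescript{\nu}{}{R}_k)_{k\ge n+1}$, the leaf $y_{n+1}$ of $\prescript{\nu}{}{R}_{n+1}$ corresponding to $Z_{n+1}$ is uniformly distributed over $\mathbf L(\prescript{\nu}{}{R}_{n+1})$}, and likewise for the PATRICIA chain.

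To prove the claim I pass to leaf-labeled trees. Let $\ell_k\colon[k]\to\mathbf L(\prescript{\nu}{}{R}_k)$ send $i$ to the leaf corresponding to $Z_i$; it is a bijection because, as recalled in Section~\ref{S:trees}, the leaves of $\mathbf R(z_1,\ldots,z_k)$ are exactly the $k$ distinct words $y_1,\ldots,y_k$. Fix $n$ and a permutation $\sigma$ of $[n+1]$ and replace $(Z_1,\ldots,Z_{n+1})$ by $(Z_{\sigma(1)},\ldots,Z_{\sigma(n+1)})$, leaving $Z_{n+2},Z_{n+3},\ldots$ in place. This preserves the law of the whole sequence (it is i.i.d.); it leaves the \emph{unlabeled} tree $\prescript{\nu}{}{R}_k$ unchanged for every $k\ge n+1$ by Remark~\ref{R:permutation_invariance}; and it replaces $y_{n+1}=\ell_{n+1}(n+1)$ by $\ell_{n+1}(\sigma(n+1))$. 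Hence $\bigl(\ell_{n+1}(\sigma(n+1)),(\prescript{\nu}{}{R}_k)_{k\ge n+1}\bigr)\overset{d}{=}\bigl(\ell_{n+1}(n+1),(\prescript{\nu}{}{R}_k)_{k\ge n+1}\bigr)$. Averaging this identity over all $\sigma\in S_{n+1}$ and using that $\ell_{n+1}$ is a bijection — so $\ell_{n+1}(\sigma(n+1))$ is uniform on $\mathbf L(\prescript{\nu}{}{R}_{n+1})$ when $\sigma$ is uniform — yields $\mathbb E\bigl[f(y_{n+1})\mid(\prescript{\nu}{}{R}_k)_{k\ge n+1}\bigr]=\tfrac1{n+1}\sum_{u\in\mathbf L(\prescript{\nu}{}{R}_{n+1})}f(u)$ for every bounded $f$, which is the claim. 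The identical computation with $\Phi\circ\mathbf R$ everywhere proves the PATRICIA version.

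By the consistency remark, $\prescript{\nu}{}{R}_n=\kappa(\prescript{\nu}{}{R}_{n+1},y_{n+1})$, so the claim shows that the conditional law of $\prescript{\nu}{}{R}_n$ given $(\prescript{\nu}{}{R}_k)_{k\ge n+1}$ is the image of the uniform law on $\mathbf L(\prescript{\nu}{}{R}_{n+1})$ under $v\mapsto\kappa(\prescript{\nu}{}{R}_{n+1},v)$; in particular it depends only on $\prescript{\nu}{}{R}_{n+1}$, and not on $\nu$. Since this holds for every $n$, the Markov property follows by peeling off $\prescript{\nu}{}{R}_{n-1},\prescript{\nu}{}{R}_{n-2},\ldots$ one at a time: each $\mathrm{Law}(\prescript{\nu}{}{R}_{m-1}\mid\prescript{\nu}{}{R}_m,\prescript{\nu}{}{R}_{m+1},\ldots)$ depends only on $\prescript{\nu}{}{R}_m$, so by the chain rule for conditional distributions $\mathrm{Law}(\prescript{\nu}{}{R}_1,\ldots,\prescript{\nu}{}{R}_{m-1}\mid\prescript{\nu}{}{R}_m,\prescript{\nu}{}{R}_{m+1},\ldots)$ depends only on $\prescript{\nu}{}{R}_m$, which is exactly conditional independence of the past and the future given the present. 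Reading the transition kernel off the pushforward gives $\mathbb P\{\prescript{\nu}{}{R}_n=\mathbf s\mid\prescript{\nu}{}{R}_{n+1}=\mathbf t\}=\tfrac1{n+1}\#\{v\in\mathbf L(\mathbf t):\kappa(\mathbf t,v)=\mathbf s\}$, which vanishes unless $\mathbf s=\kappa(\mathbf t,v)$ for some $v\in\mathbf L(\mathbf t)$. Part~(ii) follows verbatim with $\bar\kappa$ in place of $\kappa$, and Remark~\ref{R:Remy} then identifies the common backward mechanism of the PATRICIA chains with that of the R\'emy chain.

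The one genuinely delicate step is the labeling bookkeeping in the second paragraph: one must check carefully that permuting the first $n+1$ inputs fixes every later unlabeled tree while moving the label $n+1$ onto the leaf previously labeled $\sigma(n+1)$, and that it is precisely the bijectivity of $\ell_{n+1}$ that upgrades the averaged distributional identity into uniformity over the leaf set; everything else — the consistency remarks, the ``backward $\Rightarrow$ Markov'' bootstrap, the $\nu$-independence — is then routine. One point of precision worth flagging: the coefficient $\tfrac1{n+1}$ as displayed is exact only when the count $\#\{v:\kappa(\mathbf t,v)=\mathbf s\}$ equals $1$; this count is $2$ precisely when $\mathbf s$ is obtained by deleting a leaf of $\mathbf t$ whose sibling is also a leaf (since $\kappa$ then produces the same tree from that leaf and its sibling), so the backward kernel is really the pushforward of the uniform law on $\mathbf L(\mathbf t)$ by $v\mapsto\kappa(\mathbf t,v)$ — which is the form in which it is used in any case.
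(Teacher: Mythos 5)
Your proof is correct and fills in exactly the details behind the paper's one-line argument (``clear from Remark~\ref{R:permutation_invariance} and the assumption that $(Z_n)$ are independent''): the leaf of $\prescript{\nu}{}{R}_{n+1}$ occupied by $Z_{n+1}$ is, conditionally on the whole future of unlabeled trees, uniform over $\mathbf L(\prescript{\nu}{}{R}_{n+1})$, and pushing this forward through $\kappa$ (resp.\ $\bar\kappa$) gives the backward kernel and hence the time-reversed Markov property. Your closing caveat is well placed and in fact identifies a small imprecision in the Proposition as stated: since $\kappa(\mathbf t,v)=\kappa(\mathbf t,v')$ precisely when $v'$ is the sibling of $v$ and both are leaves (and likewise for $\bar\kappa$), the displayed probability should read $\#\{v\in\mathbf L(\mathbf t):\kappa(\mathbf t,v)=\mathbf s\}/(n+1)$ rather than the flat $1/(n+1)$ --- e.g.\ with $\mathbf t=\{\emptyset,0,1\}$ one has $\mathbb P\{R_1=\{\emptyset\}\mid R_2=\mathbf t\}=1\ne\tfrac12$ --- so the clean formulation is the one you give, that the backward kernel is the image of the uniform law on $\mathbf L(\mathbf t)$ under $v\mapsto\kappa(\mathbf t,v)$.
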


\begin{proof}
Suppose that $\mathbf{s} \in \mathbb{S}_n$ and $\mathbf{t} \in \mathbb{S}_{n+1}$ is such that $\mathbb{P}\{\prescript{\nu}{}{R}_{n+1} = \mathbf{t}\} > 0$.
It is clear from Remark~\ref{R:permutation_invariance} and the assumption that $(Z_n)_{n \in \mathbb{N}}$ are independent that 
$\mathbb{P}\{\prescript{\nu}{}{R}_n = \mathbf{s} \, | \, \prescript{\nu}{}{R}_{n+1} = \mathbf{t}, \prescript{\nu}{}{R}_{n+2}, \ldots\}$
is $\frac{1}{n+1}$ if $\mathbf{s} = \kappa(\mathbf{t},v)$ for some $v \in \mathbf{L}(\mathbf{t})$ and $0$ otherwise.  In particular, the
time-reversal of $(\prescript{\nu}{}{R}_n)_{n \in \mathbb{N}}$ is Markovian and hence the same is true of $(\prescript{\nu}{}{R}_n)_{n \in \mathbb{N}}$
itself.  This establishes (i).

The proof of (ii) is similar.
\end{proof}
\begin{notation}
For $y \in \{0,1\}^\star$, let 
$\tau(y) := \{z \in \{0,1\}^\infty : y < z\}$;
 that is, $\tau(y)$ is the set of infinite extensions of the finite
word $y$. 
\end{notation}
\begin{remark}
An alternative route to establishing the Markov property of $(\prescript{\nu}{}{R}_n)_{n \in \mathbb{N}}$
is to note that the random set $\{Z_1, \ldots, Z_n\}$ is conditionally independent of
$\prescript{\nu}{}{R}_1, \ldots, \prescript{\nu}{}{R}_{n-1}$ given $\prescript{\nu}{}{R}_n$.
The conditional distribution of the random set $\{Z_1, \ldots, Z_n\}$ given
$\{\prescript{\nu}{}{R}_n = \mathbf{s}\}$, where $\mathbf{L}(\mathbf{s}) = \{y_1, \ldots, y_n\}$, is that
of the random set $\{\hat Z_1, \ldots, \hat Z_n\}$, where $\hat Z_1, \ldots, \hat Z_n$ are
independent and $\hat Z_k$ is distributed according to $\nu(\cdot \cap \tau(y_k)) / \nu(\tau(y_k))$
(that is, according to $\nu$ conditioned on $\tau(y_k)$) for $1 \le k \le n$.  Thus,
\[
\mathbb{P}\{\prescript{\nu}{}{R}_{n+1} = \mathbf{t} \, | \, \prescript{\nu}{}{R}_1, \ldots, \prescript{\nu}{}{R}_{n-1}, \prescript{\nu}{}{R}_n = \mathbf{s}\}
= \mathbb{P}\{\mathbf R(\hat Z_1, \ldots, \hat Z_n, Z_{n+1}) = \mathbf{t}\},
\]
where $\hat Z_1, \ldots, \hat Z_n$ are constructed to be independent of $Z_{n+1}$.
This observation leads
readily to an explicit calculation of the (forward) transition probabilities of $(\prescript{\nu}{}{R}_n)_{n \in \mathbb{N}}$
-- see Section~2 of \cite{MR3734107}.  The backward transition probabilities may then be derived as in
Section~3 of \cite{MR3734107}, giving a complete proof of part (i) of Proposition~\ref{P:Markov_property}.
\end{remark}

\begin{remark}
\label{R:Dynkin}
Once part (i) of Proposition~\ref{P:Markov_property} is known, an alternative derivation of part (ii) is to use the
Markov property of the time-reversal of $(\prescript{\nu}{}{R}_n)_{n \in \mathbb{N}}$ and apply Dynkin's classical
criterion   for the composition of a function with a Markov process to be Markovian (see, for instance, p. 575 of \cite{MR624684}).  Specifically,
one checks that if $\bar {\mathbf{t}} \in \bar {\mathbb{S}}_{n+1}$ is such that $\mathbb{P}\{\prescript{\nu}{}{\bar R}_{n+1} = \bar {\mathbf{t}}\} > 0$,
 $\mathbf{t} \in \mathbb{S}_{n+1}$ is such that $\mathbb{P}\{\prescript{\nu}{}{R}_{n+1} = \mathbf{t}\} > 0$ and
$\bar {\mathbf{t}} = \Phi(\mathbf{t})$, and 
$\bar {\mathbf{s}} \in \mathbb{S}_n$, then 
$\mathbb{P}\{\Phi(\prescript{\nu}{}{R}_n) = \bar {\mathbf{s}} \, | \, \prescript{\nu}{}{R}_{n+1} = \mathbf{t}\}$
is $\frac{1}{n+1}$ for $\bar {\mathbf{s}}$ of the form $\bar \kappa(\bar {\mathbf{t}}, v)$ when $v \in \mathbf{L}(\bar {\mathbf{t}})$ and
$0$ otherwise.  The fact that this conditional probability is the same for all $\mathbf{t} \in \mathbb{S}_{n+1}$ such that 
$\mathbb{P}\{\prescript{\nu}{}{R}_{n+1} = \mathbf{t}\} > 0$ and
$\bar {\mathbf{t}} = \Phi(\mathbf{t})$ gives that the time-reversal of $(\prescript{\nu}{}{\bar R}_n)_{n \in \mathbb{N}}$
is Markov and hence the same is true of the process $(\prescript{\nu}{}{\bar R}_n)_{n \in \mathbb{N}}$ itself.
Moreover, this same observation leads readily to the claimed backward transition probabilities.
\end{remark}

\begin{remark}  \label{R:Dynkin_fail}
Interestingly, Dynkin's criterion does not hold for the forwards in time processes
$(\prescript{\nu}{}{R}_n)_{n \in \mathbb{N}}$ and the function $\Phi$, as the following
example with $\nu~=~\gamma$ and $R_n:= \prescript{\gamma}{}{R}_n$, $n \in \mathbb{N}$, shows.
 Define trees $\mathbf{s}', \mathbf{s}'', \bar {\mathbf{t}}$ by
 $\mathbf{L}(\mathbf{s}') = \{00, 01, 1\}$
 $\mathbf{L}(\mathbf{s}'') = \{000, 001, 1\}$,
 and
 $\mathbf{L}(\bar {\mathbf{t}}) = \{000, 001, 01, 1\}$ (see Figure~\ref{Dynkin}).
 Note that $\Phi(\mathbf{s}') = \Phi(\mathbf{s}'') = \mathbf{s}'$.
 If $R_3 = \mathbf{s}'$, then the only way that $\Phi(R_4)$ can be $\bar {\mathbf{t}}$ is if
 $Z_4$ is of the form $00\ldots$; thus,
 $\mathbb{P}\{\Phi(R_4) = \bar {\mathbf{t}} \, | \, R_3 = \mathbf{s}'\} = \frac{1}{4}$.
 Similarly, if $R_3 = \mathbf{s}''$, then the only way that $\Phi(R_4)$ can be $\bar {\mathbf{t}}$ is if
 $Z_4$ is of the form $000\ldots$ or $01\ldots$; thus
 $\mathbb{P}\{\Phi(R_4) = \bar {\mathbf{t}} \, | \, R_3 = \mathbf{s}''\} = \frac{1}{4} + \frac{1}{8}$.
Since these two conditional probabilities are different, 
Dynkin's criterion does not hold.
\end{remark}
\begin{figure}
\centering
\includegraphics[height=3cm]{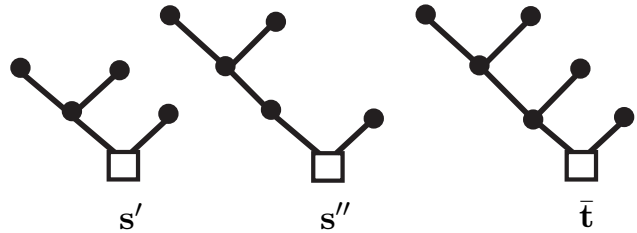}
\caption
{Illustration of Remark~\ref{R:Dynkin_fail}:   The trees $\mathbf s'$ and  $\mathbf s''$ have the same PATRICIA contraction. However, there are more possibilites (and a higher probability)  for a radix sort transition from $\mathbf s'$ than from $\mathbf s''$ to lead to a tree whose PATRICIA contraction is~$\bar {\mathbf t}$. 
}
\label{Dynkin}
\end{figure}

As observed in \cite{MR3734107}, different choices of $\nu$ result in different distributions of the radix sort chains $(\prescript{\nu}{}{R}_n)$. By way of contrast,
two different diffuse probability measures $\nu'$ and $\nu''$ on $\{0,1\}^\infty$ may result in one and the same distribution of the PATRICIA chains 
$(\prescript{\nu'}{}{\bar R}_n)$ and $(\prescript{\nu''}{}{\bar R}_n)$.
As a trivial  example, consider $\nu' = \gamma$ and $\nu'' = \delta_0 \otimes \gamma$ -- we may couple
the two radix sort chains together so that $\prescript{\nu''}{}{R}_n = 0 \prescript{\nu'}{}{R}_n$
for all $n \in \mathbb{N}$, in which case $\Phi(\prescript{\nu'}{}{\bar R}_n) = \Phi(\prescript{\nu''}{}{\bar R}_n)$
for all $n \in \mathbb{N}$.  

\begin{definition}
Declare that two probability measures $\nu'$ and $\nu''$ are {\em PATRICIA--equivalent}
when $(\prescript{\nu'}{}{\bar R}_n)_{n \in \mathbb{N}}$
and $(\prescript{\nu''}{}{\bar R}_n)_{n \in \mathbb{N}}$ have the same distribution. 
\end{definition}

For later use in Example~\ref{E:zig-zag_path} below we   show the following.

\begin{lemma}
\label{L:PATRICIA_fills_out}
For any diffuse probability measure $\nu$ on $\{0,1\}^\infty$ there exists a probability measure $\bar \nu$ on $\{0,1\}^\infty$ which is PATRICIA--equivalent  to $\nu$ and assigns strictly positive probability to $\tau(y)$ for every $y\in \{0,1\}^\star$.
\end{lemma}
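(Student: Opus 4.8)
The plan is to build $\bar\nu$ explicitly as the image of $\nu$ under a canonical homeomorphism that ``collapses the degree-$1$ stretches'' of the tree carrying $\nu$, and then to check that this relabeling is invisible to the PATRICIA contraction, so that the two PATRICIA chains in fact coincide under the resulting coupling.

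First I would set $S := \{y \in \{0,1\}^\star : \nu(\tau(y)) > 0\}$. Since $\tau(y) = \tau(y0) \sqcup \tau(y1)$, the set $S$ is closed under taking prefixes, contains $\emptyset$, and every $y \in S$ has at least one child in $S$; its set of infinite rays $\partial S$ equals $\mathrm{supp}(\nu)$, and $\nu(\partial S) = 1$. Diffuseness of $\nu$ forces $\partial S$ to be perfect: an isolated ray $z$ would admit a prefix $y < z$ with $\tau(y) \cap \partial S = \{z\}$, whence $\nu(\{z\}) = \nu(\tau(y)) > 0$. Equivalently, every vertex of $S$ has a descendant that is a \emph{branching vertex} (one with both children in $S$), and every ray in $\partial S$ passes through infinitely many branching vertices.

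Hence the map $\Psi : \partial S \to \{0,1\}^\infty$ given by $\Psi(z) := (\epsilon_1(z), \epsilon_2(z), \ldots)$ is well defined, where $b_1(z) < b_2(z) < \ldots$ enumerates the branching vertices of $S$ that are prefixes of $z$ and $\epsilon_k(z)$ is the bit of $z$ immediately following $b_k(z)$. Reconstructing $z$ from its turn sequence (run up a degree-$1$ stretch of $S$, turn according to the next prescribed bit, repeat) shows $\Psi$ is a bijection, indeed a homeomorphism of Cantor sets, so I may put $\bar\nu := \Psi_*\nu$, which is again a diffuse probability measure on $\{0,1\}^\infty$. For $y = \epsilon_1 \ldots \epsilon_m$ one has $\Psi^{-1}(\tau(y)) = \tau(b_m \epsilon_m) \cap \partial S$, where $b_m$ is the $m$-th branching vertex reached by running up $S$ and turning successively according to $\epsilon_1, \ldots, \epsilon_{m-1}$; since $b_m \epsilon_m \in S$ this yields $\bar\nu(\tau(y)) = \nu(\tau(b_m \epsilon_m)) > 0$, the second required property.

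It remains to establish PATRICIA-equivalence, for which I would prove that $\Phi(\mathbf R(z_1, \ldots, z_n)) = \Phi(\mathbf R(\Psi(z_1), \ldots, \Psi(z_n)))$ for every $n$ and all distinct $z_1, \ldots, z_n \in \partial S$. Applied to an i.i.d.\ $\nu$-sequence $(Z_i)$, which almost surely lies in $\partial S$ and is pairwise distinct, and since $(\Psi(Z_i))$ is i.i.d.\ $\bar\nu$, this gives $\prescript{\bar\nu}{}{\bar R}_n = \prescript{\nu}{}{\bar R}_n$ almost surely for every $n$, hence the two chains have the same law. The displayed identity is proved by induction on $n$ from the recursive definition of $\Phi$: for $n \ge 2$ the longest common prefix $w$ of $z_1, \ldots, z_n$ is necessarily a branching vertex of $S$, say $w = b_k$; the $z_i$ share their first $k-1$ turns and split at the $k$-th, so the longest common prefix of the $\Psi(z_i)$ has length $k-1$ and induces exactly the same two-block partition of $[n]$ as does the split of $\mathbf R(z_1, \ldots, z_n)$ at $w$, while the turn-encoding of each $z_i$ inside the subtree of $S$ rooted below $w0$ (resp.\ $w1$) is precisely the shifted word to which the definition of $\Phi$ passes, so the corresponding PATRICIA subtrees agree by the inductive hypothesis; the base case $n = 1$ is immediate since both radix sort trees are trivial. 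I expect the only delicate point to be this final bookkeeping — matching the ``delete the common prefix'' shift built into the definition of $\Phi$ on $\{0,1\}^\star$ with the ``restrict to the subtree below a branching vertex'' operation inside $S$, uniformly along the recursion.
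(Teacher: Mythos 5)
Your proof is correct and, at bottom, rests on the same coupling idea as the paper's, but the packaging of the construction is genuinely different and somewhat cleaner. The paper builds $\bar\nu$ by an iteration: it defines measures $\nu_1, \nu_2, \ldots$ by conditioning on the cylinders $\tau(y)$, $y\in\{0,1\}^n$, stripping off the deterministic tails one level at a time, and taking the unique projective limit, after which it observes (tersely) that the coupling $Z_i \mapsto \bar Z_i$ implicit in the construction deletes only out-degree-$1$ vertices from the radix sort tree and hence leaves the PATRICIA contraction unchanged. You instead define $\bar\nu$ in one shot as the pushforward $\Psi_*\nu$ under the ``turn-encoding'' homeomorphism $\Psi: \partial S \to \{0,1\}^\infty$ built from the branching vertices of the support tree $S := \{y : \nu(\tau(y)) > 0\}$, and then give a self-contained inductive proof that $\Phi(\mathbf R(z_1,\ldots,z_n)) = \Phi(\mathbf R(\Psi(z_1),\ldots,\Psi(z_n)))$. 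The two constructions produce the same $\bar\nu$ (your $\Psi$ is exactly the map that the paper's $(\nu_n)_n$ encode level by level), but your version makes the coupling explicit as a single deterministic map rather than a limit, and your inductive verification of the tree identity is more detailed than the paper's ``it is not difficult to see.'' One point worth spelling out when you write this up: the induction is really over pairs $(S, n)$, since after splitting at the first branching vertex you apply the hypothesis to the shifted tree $S' := \{v : w0v \in S\}$ (and its mirror) with the corresponding map $\Psi_{S'}$, not to the original $S$. You implicitly handle this (``the turn-encoding of each $z_i$ inside the subtree of $S$ rooted below $w0$''), but stating the inductive hypothesis uniformly over all support trees $S$ with dense branching would close that bookkeeping gap cleanly.
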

\begin{proof}
We can write $\nu$ as  
$\delta_{u_1} \otimes \cdots \otimes \delta_{u_k} \otimes \nu^{\emptyset}$ 
for some finite, possibly empty, maximal sequence $u_1 \ldots u_k$
and some probability measure $\nu^{\emptyset}$ on $\{0,1\}^\infty$. Set $\nu_1 := \nu^{\emptyset}$.  
Note by the assumed maximality of $u_1 \ldots u_k$ that $\nu_1(\tau(0)) > 0$ and $\nu_1(\tau(1)) > 0$.

Suppose for some $n\in \mathbb{N}$ that we have built diffuse probability measures
$\nu_1, \ldots, \nu_n$ on $\{0,1\}^\infty$ such that:
\begin{itemize}
\item
$\nu_1, \ldots, \nu_n$ are
PATRICIA--equivalent to $\nu$, 
\item 
$\nu_n(\tau(y)) > 0$
for every $y \in \{0,1\}^n$,
\item
for $1 \le m < n$ the probability measures $\nu_m$ and $\nu_n$ agree on sets of the form 
$\tau(y)$, $y \in \{0,1\}^m$.
\end{itemize} 
 
Conditioning the probability measure
$\nu_n$ on the set $\tau(y)$, $y = y_1 \ldots y_n \in \{0,1\}^n$,  gives a probability measure of
 the form 
$\delta_{y_1} \otimes \cdots \otimes \delta_{y_n} \otimes \delta_{u_1} \otimes \cdots \otimes \delta_{u_k} \otimes \nu^y$ 
for some finite, possibly empty, maximal sequence $u_1 \ldots u_k$ 
(we re-use notation from above and our notation doesn't record the dependence of this
sequence on $y$) and some probability measure $\nu^y$ on $\{0,1\}^\infty$.  Note that 
$\nu^y(\tau(y0)) > 0$ and $\nu^y(\tau(y1)) > 0$. Put  
$\nu_{n+1} := \sum_{y \in \{0,1\}^n} \nu_n(\tau(y)) \delta_{y_1} \otimes \cdots \otimes \delta_{y_n} \otimes \nu^y$.  
It is clear that the probability measures $\nu_{n+1}$ and $\nu$ are PATRICIA--equivalent and that for 
$1 \le m < n+1$ the probability measures $\nu_m$ and $\nu_{n+1}$ agree on sets of the form 
$\tau(y)$, $y \in \{0,1\}^m$.

There is thus a unique diffuse probability measure 
$\bar \nu$ such that the probability measures $\nu_m$ and $\bar \nu$ agree on sets of the form 
$\tau(y)$, $y \in \{0,1\}^m$, for all $m \in \mathbb{N}$.  

It is not difficult to see that $\nu$ and $\bar \nu$  are PATRICIA--equivalent:  
we can couple together two i.i.d. sequences of inputs
distributed according to $\nu$ and $\bar \nu$ so that for each $n \in \mathbb{N}$
the tree $\prescript{\bar \nu}{}{R}_n$ is obtained from the tree $\prescript{\nu}{}{R}_n$
by a deterministic operation that removes certain vertices with out-degree $1$,
and hence $\Phi(\prescript{\bar \nu}{}{R}_n) = \Phi(\prescript{\nu}{}{R}_n)$ for all $n \in \mathbb{N}$.
\end{proof}

\begin{remark}
We leave to the reader the proof of the fact (which we shall not use) that if $\nu'$ and $\nu''$ are 
PATRICIA--equivalent, then, in the notation of Lemma~\ref{L:PATRICIA_fills_out},
$\bar {\nu'} = \bar {\nu''}$.
\end{remark}

It follows from Lemma~\ref{L:PATRICIA_fills_out} that for every diffuse probability measure 
$\nu$ on $\{0,1\}^\infty$ almost surely the successive states of the 
PATRICIA chain  $(\prescript{\nu}{}{\bar R}_n)_{n \in \mathbb{N}}$
``fill out'' the space $\{0,1\}^\star$ -- see Corollary~\ref{Cor:PATRICIA_fills_out}.  
This observation provides a useful
sufficient condition for determining when an infinite PATRICIA bridge is
of the form $(\prescript{\nu}{}{\bar R}_n)_{n \in \mathbb{N}}$ for some diffuse
probability measure $\nu$ on $\{0,1\}^\infty$ -- see Example~\ref{E:zig-zag_path}.

\begin{corollary}
\label{Cor:PATRICIA_fills_out}
For any diffuse probability measure $\nu$ on $\{0,1\}^\infty$  
we have $\bigcup_{m\in \mathbb N} \bigcap_{n \ge m} \prescript{\nu}{}{\bar R}_n  = \{0,1\}^\star$
almost surely.
\end{corollary}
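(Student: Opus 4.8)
The plan is to reduce to the case where $\nu$ has full support and then to show, for each fixed finite word $y$, that $y$ is almost surely a vertex of $\prescript{\nu}{}{\bar R}_n$ for all but finitely many $n$. By Lemma~\ref{L:PATRICIA_fills_out} there is a diffuse probability measure $\bar\nu$ that is PATRICIA--equivalent to $\nu$ and has $\bar\nu(\tau(w)) > 0$ for every $w \in \{0,1\}^\star$. Since PATRICIA--equivalence means that $(\prescript{\nu}{}{\bar R}_n)_{n \in \mathbb N}$ and $(\prescript{\bar\nu}{}{\bar R}_n)_{n \in \mathbb N}$ have the same distribution, while $\bigcup_{m} \bigcap_{n \ge m} X_n$ depends measurably on a trajectory $(X_n)_{n \in \mathbb N}$, it suffices to prove the corollary with $\bar\nu$ in place of $\nu$; so from now on assume $\nu(\tau(w)) > 0$ for all $w \in \{0,1\}^\star$. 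Because $\{0,1\}^\star$ is countable and $\prescript{\nu}{}{\bar R}_n \subseteq \{0,1\}^\star$ always holds, it then suffices to fix $y = y_1 \ldots y_\ell \in \{0,1\}^\star$ and prove that almost surely $y \in \prescript{\nu}{}{\bar R}_n$ for all large $n$; intersecting these events over $y$ gives the claim.

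The main work is a combinatorial characterization of membership in a PATRICIA tree. For distinct $z_1, \ldots, z_n \in \{0,1\}^\infty$ put $F_0 := \{z_1, \ldots, z_n\}$ and, recursively for $i \ge 1$ whenever $F_{i-1}$ has been defined and $\#F_{i-1} \ge 2$, let $q_{i-1}$ be the longest common prefix of the elements of $F_{i-1}$ and set $F_i := \{z \in F_{i-1} : q_{i-1} y_i < z\}$; note that $F_i \neq \emptyset$ when $\#F_{i-1} \ge 2$, since then $F_{i-1}$ meets both $\tau(q_{i-1} 0)$ and $\tau(q_{i-1} 1)$. The assertion to be proved is that $y_1 \ldots y_\ell$ is a vertex of $\Phi(\mathbf R(z_1, \ldots, z_n))$ precisely when $F_0, F_1, \ldots, F_{\ell-1}$ are all defined and each of them has at least two elements. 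I expect this to be the main obstacle. It should follow by induction on $\ell$ by unwinding the recursive definition of $\Phi$: when $\#F_0 \ge 2$ and $q_0$ is the longest common prefix of the $z_j$, the subtrees appearing in the decomposition of $\mathbf R(z_1, \ldots, z_n)$ below $q_0 0$ and $q_0 1$ are themselves the radix sort trees $\mathbf R(G^{(0)})$ and $\mathbf R(G^{(1)})$, where $G^{(b)}$ is obtained by stripping the prefix $q_0 b$ from those $z_j$ that extend it, so that $\Phi(\mathbf R(z_1, \ldots, z_n)) = \{\emptyset\} \sqcup 0 \, \Phi(\mathbf R(G^{(0)})) \sqcup 1 \, \Phi(\mathbf R(G^{(1)}))$; hence $y_1 \ldots y_\ell$ is a vertex of $\Phi(\mathbf R(z_1, \ldots, z_n))$ iff $\ell = 0$, or $\ell \ge 1$ and $y_2 \ldots y_\ell$ is a vertex of $\Phi(\mathbf R(G^{(y_1)}))$. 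Since running the recursion for $G^{(y_1)}$ with the word $y_2 \ldots y_\ell$ reproduces the sets $F_1, F_2, \ldots$ up to prepending the fixed prefix $q_0 y_1$ (which changes no cardinalities), the inductive step goes through; the degenerate cases $\ell = 0$ (the empty word is the root of every PATRICIA tree) and $\#F_0 = 1$ (then $\Phi(\mathbf R(z_1)) = \{\emptyset\}$) furnish the base.

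Finally I would argue by Borel--Cantelli. Consider the $2\ell$ cylinder sets $\tau(w 0)$ and $\tau(w 1)$ as $w$ runs over the prefixes $\emptyset, y_1, y_1 y_2, \ldots, y_1 \ldots y_{\ell-1}$ of $y$. Each has strictly positive $\nu$-measure, so, $Z_1, Z_2, \ldots$ being i.i.d.\ with law $\nu$, almost surely each such cylinder contains $Z_j$ for infinitely many $j$; in particular there is an almost surely finite random $N$ so that for every $n \ge N$ all $2\ell$ of these cylinders already meet $\{Z_1, \ldots, Z_n\}$. For such $n$, forming $F_0^{(n)} := \{Z_1, \ldots, Z_n\}, F_1^{(n)}, F_2^{(n)}, \ldots$ as above, an immediate induction on $i$ gives $q_i^{(n)} = y_1 \ldots y_i$ and $F_i^{(n)} = \{Z_j : j \le n, \ y_1 \ldots y_i < Z_j\}$ for $0 \le i \le \ell - 1$, and since the two cylinders below $y_1 \ldots y_i$ each contribute some $Z_j$ we get $\#F_i^{(n)} \ge 2$. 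The membership criterion then yields $y \in \Phi(\mathbf R(Z_1, \ldots, Z_n)) = \prescript{\nu}{}{\bar R}_n$ for all $n \ge N$. One should note here that, unlike the radix sort chain (which is nondecreasing as a sequence of subsets of $\{0,1\}^\star$), the PATRICIA chain $(\prescript{\nu}{}{\bar R}_n)_{n \in \mathbb N}$ is \emph{not} monotone in $n$, so this stabilization argument genuinely uses the membership criterion rather than a bare union. Taking the intersection over the countably many $y \in \{0,1\}^\star$ of the almost sure events so obtained, together with the inclusion $\prescript{\nu}{}{\bar R}_n \subseteq \{0,1\}^\star$, gives $\bigcup_{m \in \mathbb N} \bigcap_{n \ge m} \prescript{\nu}{}{\bar R}_n = \{0,1\}^\star$ almost surely.
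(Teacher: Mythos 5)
Your proof is correct, and it follows the paper's overall strategy (reduce to a PATRICIA-equivalent $\bar\nu$ with full support via Lemma~\ref{L:PATRICIA_fills_out}, then use Borel--Cantelli on cylinder sets), but the middle step is handled quite differently. The paper observes that once all $2^k$ cylinders of depth $k$ contain some $Z_m$ with $m \le n$, the \emph{radix sort} tree $\prescript{\bar\nu}{}{R}_n$ contains $\{0,1\}^{\le k}$, and then relies on the (implicit, easily verified by induction on the recursive definition of $\Phi$) fact that if a radix sort tree contains all words up to depth $k$ then so does its PATRICIA contraction, because no vertex at depth $< k$ has out-degree $1$. You instead build a recursive membership criterion for PATRICIA trees via the sets $F_0, F_1, \ldots$ and apply it word by word. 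Both are valid; the paper's route is shorter because the radix sort chain is monotone in $n$ as a sequence of subsets of $\{0,1\}^\star$, so ``being complete to depth $k$'' propagates forward once established, whereas your direct approach must contend with the fact that (as you correctly note, and as can be seen by taking, e.g., inputs $100\ldots, 101\ldots, 11\ldots$ and then adding $0\ldots$, which evicts $00$ from the PATRICIA tree) the PATRICIA chain is \emph{not} monotone, forcing you to re-derive membership at each $n$ from the criterion. Your observation about non-monotonicity is a genuine and useful clarification that the paper leaves implicit; the price you pay for it is the extra combinatorial machinery, which the paper avoids by routing the argument through the monotone radix sort chain.
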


\begin{proof}
Let $\bar \nu$ be the diffuse probability measure constructed in Lemma~\ref{L:PATRICIA_fills_out}
that is PATRICIA-equivalent to $\nu$ and satisfies $\bar \nu(\tau(y)) > 0$ for all $y\in \{0,1\}^\star$. 
Almost surely, for any $k \in \mathbb{N}$ there exists an $N \in \mathbb{N}$ such that 
for all $n \ge N$ and for all $y \in \{0,1\}^k$ there
is an $m \in [n]$ with $Z_m \in \tau(y)$.  Therefore, almost surely,
$\prescript{\bar \nu}{}{R}_n \supseteq \{\emptyset\} \cup \bigsqcup_{\ell=1}^k \{0,1\}^\ell$ 
for $n \ge N$.  Since $(\prescript{\nu}{}{\bar R}_n)_{n \in \mathbb{N}}$ 
and $(\prescript{\bar \nu}{}{\bar R}_n)_{n \in \mathbb{N}}$ have the same distribution,
this establishes the result.
\end{proof}

\section{Infinite bridges}
\label{S:bridges}
An {\em infinite bridge} for $(\prescript{\nu}{}{R}_n)_{n \in \mathbb{N}}$ (resp. $(\prescript{\nu}{}{\bar R}_n)_{n \in \mathbb{N}}$)
is a Markov chain $(\prescript{\nu}{}{R}_n^\infty)_{n \in \mathbb{N}}$ (resp. $(\prescript{\nu}{}{\bar R}_n^\infty)_{n \in \mathbb{N}}$)
such that $\prescript{\nu}{}{R}_1^\infty = \emptyset$ (resp. $\prescript{\nu}{}{\bar R}_1^\infty = \emptyset$) 
and
\[
\mathbb{P}\{\prescript{\nu}{}{R}_n^\infty = \mathbf{s} \, | \, \prescript{\nu}{}{R}_{n+1}^\infty = \mathbf{t}\}
=
 \mathbb{P}\{\prescript{\nu}{}{R}_n = \mathbf{s} \, | \, \prescript{\nu}{}{R}_{n+1} = \mathbf{t}\}
\]
for $\mathbf{s} \in \mathbb{S}_n$, $\mathbf{t} \in \mathbb{S}_{n+1}$, $n \in \mathbb{N}$
(resp. 
\[
\mathbb{P}\{\prescript{\nu}{}{\bar R}_n^\infty = \bar {\mathbf{s}} \, | \, \prescript{\nu}{}{\bar R}_{n+1}^\infty = \bar {\mathbf{t}}\}
= 
\mathbb{P}\{\prescript{\nu}{}{\bar R}_n = \bar {\mathbf{s}} \, | \, \prescript{\nu}{}{\bar R}_{n+1} = \bar {\mathbf{t}}\}
\]
for $\bar {\mathbf{s}} \in \bar S_n$, $\bar {\mathbf{t}} \in \bar {\mathbb{S}}_{n+1}$, $n \in \mathbb{N}$).  The reason we use this terminology
is the following.  A finite bridge  for $(\prescript{\nu}{}{R}_n)_{n \in \mathbb{N}}$ with end-point $\mathbf{t} \in \mathbb{S}_m$ for
some $m \in \mathbb{N}$ is the Markov process $(\prescript{\nu}{}{R}_n^\mathbf{t})_{n \in [m]}$
obtained by conditioning $(\prescript{\nu}{}{R}_n)_{n \in [m]}$ on the event $\{\prescript{\nu}{}{R}_m = \mathbf{t}\}$.
A finite bridge has the same backward transition probabilities as $(\prescript{\nu}{}{R}_n)_{n \in \mathbb{N}}$, and, writing $M(\mathbf{t})$ for
the number of leaves of the binary tree $\mathbf{t}$, one
way to produce an infinite bridge is to find a sequence of trees $(\mathbf{t}_k)_{k \in \mathbb{N}}$ such that $M(\mathbf{t}_k) \to \infty$
and the finite-dimensional distributions of   $(\prescript{\nu}{}{R}_n^{\mathbf{t}_k})_{n \in [M(\mathbf{t}_k)]}$ converge
as $k \to \infty$: the limiting finite-dimensional distributions will be consistent and hence define
a stochastic process $(\prescript{\nu}{}{R}_n^\infty)_{n \in \mathbb{N}}$ that is an infinite bridge for 
$(\prescript{\nu}{}{R}_n)_{n \in \mathbb{N}}$.  Similar remarks hold for the infinite bridges of $(\prescript{\nu}{}{\bar R}_n)_{n \in \mathbb{N}}$. 
We call the infinite bridges for the PATRICIA chains {\em infinite PATRICIA bridges}.

It follows from Proposition~\ref{P:Markov_property} that for an arbitrary diffuse probability measure~$\nu$ any infinite bridge for
$(\prescript{\nu}{}{R}_n)_{n \in \mathbb{N}}$ is an infinite bridge for $(R_n)_{n \in \mathbb{N}}$. Conversely, an infinite bridge
$(R_n^\infty)_{n \in \mathbb{N}}$ for $(R_n)_{n \in \mathbb{N}}$ is a bridge for $(\prescript{\nu}{}{R}_n)_{n \in \mathbb{N}}$ provided
$\{\mathbf{t} \in \mathbb{S}_n : \mathbb{P}\{R_n^\infty = \mathbf{t}\} > 0\} \subseteq \{\mathbf{t} \in \mathbb{S}_n : \mathbb{P}\{\prescript{\nu}{}{R}_n = \mathbf{t}\} > 0\}$
for all $n \in \mathbb{N}$.  We may therefore restrict our attention to infinite bridges for $(R_n)_{n \in \mathbb{N}}$.
Similar observations give that in investigating the infinite bridges of the processes $(\prescript{\nu}{}{\bar R}_n)_{n \in \mathbb{N}}$
we may restrict attention to those of $(\bar R_n)_{n \in \mathbb{N}}$.

It follows from the general theory of Doob--Martin compactifications that the family of distributions
of infinite bridges is a compact convex set, every element of which is unique mixture of extremal
distributions.  Moreover, every extremal infinite bridge is a limit of finite bridges 
(but the converse is not true in general).  Furthermore,
we see from Proposition~\ref{P:Markov_property} that each Markov process of the form
$(\prescript{\nu}{}{R}_n)_{n \in \mathbb{N}}$ (resp. $(\prescript{\nu}{}{\bar R}_n)_{n \in \mathbb{N}}$)
is an infinite bridge for $(R_n)_{n \in \mathbb{N}}$ (resp. $(\bar R_n)_{n \in \mathbb{N}}$) and, as we shall see below,
these infinite bridges are extremal.
The interesting questions
for the infinite bridges of $(R_n)_{n \in \mathbb{N}}$
and $(\bar R_n)_{n \in \mathbb{N}}$ are therefore:
\begin{itemize}
\item
What are all the extremal infinite bridges?
\item
Is every limit of finite bridges extremal?
\item
Is every extremal infinite bridge of the form $(\prescript{\nu}{}{R}_n)_{n \in \mathbb{N}}$ (resp. $(\prescript{\nu}{}{\bar R}_n)_{n \in \mathbb{N}}$)
for some $\nu$?
\end{itemize}

In the case of $(R_n)_{n \in \mathbb{N}}$, these questions were answered in \cite{MR3734107} as follows.

\newpage

\begin{theorem}
\label{T:radix_sort}
\begin{itemize}
\item[(i)]
An infinite bridge for $(R_n)_{n \in \mathbb{N}}$ is extremal if and only if it has an almost surely trivial tail $\sigma$-field.
\item[(ii)]
An infinite bridge for $(R_n)_{n \in \mathbb{N}}$ is extremal if and only if it is a weak limit of finite bridges.
\item[(iii)]
The extremal infinite bridges for $(R_n)_{n \in \mathbb{N}}$ coincide with collection of Markov processes of the form $(\prescript{\nu}{}{R}_n)_{n \in \mathbb{N}}$ for
some diffuse probability measure $\nu$.
\end{itemize}
\end{theorem}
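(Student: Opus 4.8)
The plan is to establish the three assertions more or less in the order (i), then a combination of (ii) and (iii), exploiting the Doob--Martin theory recalled just above the statement. First I would set up the Doob--Martin compactification of the state space $\bigsqcup_n \mathbb{S}_n$ with respect to the chain $(R_n)_{n \in \mathbb{N}}$; the general theory already gives that the extremal infinite bridges are exactly the ones sitting at points of the Martin boundary, that the family of infinite bridge distributions is a Choquet simplex, and that every extremal bridge is a weak limit of finite bridges. What remains for (i) is the standard identification of extremality with triviality of the tail $\sigma$-field: one direction is immediate from the $0$--$1$ law for Markov chains (an extremal bridge is ergodic with respect to the shift on trajectories and hence has trivial tail), and the converse follows because a mixture with nontrivial mixing measure has a nontrivial tail event recording which component was chosen. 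I expect (i) to be essentially soft, modulo carefully invoking the right measure-theoretic machinery.

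For (iii) and the nontrivial direction of (ii), the key observation is the ``fills out'' property: by Lemma~\ref{L:PATRICIA_fills_out} (and its radix-sort analogue, which is even easier since conditioning $\nu$ on $\tau(y)$ and resampling gives a $\nu$-equivalent measure) the successive trees $\prescript{\nu}{}{R}_n$ almost surely exhaust $\{0,1\}^\star$. Now suppose $(R_n^\infty)_{n \in \mathbb{N}}$ is an extremal infinite bridge; by the Doob--Martin theory it is a weak limit of finite bridges $(R_n^{\mathbf{t}_k})$. The strategy is to read off a diffuse probability measure $\nu$ on $\{0,1\}^\infty$ from the asymptotic shape of the $\mathbf{t}_k$: for each finite word $y$, the ``mass'' $\nu(\tau(y))$ should be recovered as a limit of the proportion of leaves of $\mathbf{t}_k$ that lie below $y$ (more precisely, one tracks, for a fixed leaf in $R_n^\infty$, the conditional probability that the next input falls into each of the current leaf-boxes, and passes to the limit in $n$). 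One checks these limiting masses are consistent and sum to one across each level, hence define a Borel probability measure $\nu$ on $\{0,1\}^\infty$; diffuseness is forced because the tree is always built from distinct infinite words. Then one verifies that the forward transition probabilities of $(R_n^\infty)$ coincide with those of $(\prescript{\nu}{}{R}_n)$ by comparing the explicit formula for the latter (recalled via the Remark preceding Proposition~\ref{P:Markov_property} and Section~2 of \cite{MR3734107}) with the limiting transition probabilities computed from the $\mathbf{t}_k$. Conversely, every $(\prescript{\nu}{}{R}_n)$ is an infinite bridge for $(R_n)$ by Proposition~\ref{P:Markov_property}(i), and it is extremal because its tail $\sigma$-field is trivial: the fills-out property implies that the whole sequence $(Z_1, Z_2, \ldots)$ of i.i.d.\ inputs is measurable with respect to the tail of $(\prescript{\nu}{}{R}_n)$ up to a permutation, and the exchangeable/tail $\sigma$-field of an i.i.d.\ sequence is trivial by the Hewitt--Savage and Kolmogorov $0$--$1$ laws; this gives (i)$\Rightarrow$ extremality for these chains and, combined with (iii), finishes (ii) since any weak limit of finite bridges is then seen to be one of the $(\prescript{\nu}{}{R}_n)$'s and hence extremal.

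The main obstacle I anticipate is the extraction of $\nu$ from the approximating sequence $(\mathbf{t}_k)$ and the proof that the resulting chain really is $(\prescript{\nu}{}{R}_n)$ — that is, showing the limit of finite bridges is not some exotic object but is always pinned down by a single diffuse measure on $\{0,1\}^\infty$. This requires a genuine compactness-plus-consistency argument: one must argue that along the convergent subsequence the leaf-mass profiles converge (using compactness of the space of probability measures on the compact space $\{0,1\}^\infty$), that the limit is independent of the choice of subsequence (forced by convergence of the finite-dimensional distributions of the bridge), and that no mass escapes to the ``boundary'' in a way that would make $\nu$ sub-probability or non-diffuse. Getting the bookkeeping right here — matching the combinatorial description of $\kappa(\mathbf{t},v)$ and the forward transitions of $(\prescript{\nu}{}{R}_n)$ against the Doob--Martin kernel limits — is where the real work lies; everything else is either the general Doob--Martin formalism or a direct consequence of Lemma~\ref{L:PATRICIA_fills_out}.
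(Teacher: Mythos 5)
The paper does not itself prove Theorem~\ref{T:radix_sort}; it simply cites \cite{MR3734107}. Your sketch is essentially the Doob--Martin argument carried out there: part (i) is the standard correspondence between extreme points of the simplex of infinite bridges and those with almost surely trivial tail; parts (ii) and (iii) are obtained by computing the Martin kernel, showing that convergence along a sequence $\mathbf{t}_k$ of end states forces convergence of the leaf-mass profiles to a diffuse $\nu$, and matching the resulting $h$-transformed transitions against the explicit transitions of $(\prescript{\nu}{}{R}_n)$. The logical structure, including the observation that (ii) is a special feature of this chain rather than a general Doob--Martin fact, is correct.

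Two steps are softer than you present them. The claim that ``diffuseness is forced because the tree is always built from distinct infinite words'' does not actually rule out the atomic leaf-mass profiles of $\mathbf{t}_k$ concentrating a macroscopic mass near a single infinite word in the limit; establishing that the limit $\nu$ must be diffuse (equivalently, that nondiffuse measures give rise only to non-minimal harmonic functions) is a genuine piece of the argument in \cite{MR3734107}. Second, your Hewitt--Savage step is stated backwards: it is neither needed nor literally true that the tail of $(\prescript{\nu}{}{R}_n)$ determines the input sequence $(Z_n)$ up to permutation, and the ``fills out'' lemma is irrelevant here. What is actually used is the reverse inclusion: since by Remark~\ref{R:permutation_invariance} each $R_n$ is a symmetric function of $Z_1, \ldots, Z_n$, every tail event of $(\prescript{\nu}{}{R}_n)$ is an exchangeable event of the i.i.d.\ sequence $(Z_n)$ and hence trivial. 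With those two points tightened, and the Martin kernel computation actually carried through, your proposal matches \cite{MR3734107}.
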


By part (ii) of Proposition~\ref{P:Markov_property} the class of
infinite PATRICIA bridges and the class of infinite R\'emy bridges coincide. 
The next theorem shows that analogues of parts (i)
and (ii) Theorem~\ref{T:radix_sort} hold for the infinite PATRICIA bridges. 

\begin{theorem}
\label{T:PATRICIA_i_ii}
\begin{itemize}
\item[(i)]
An infinite bridge for $(\bar R_n)_{n \in \mathbb{N}}$ is extremal if and only if it has an almost surely trivial tail $\sigma$-field.
\item[(ii)]
An infinite bridge for $(\bar R_n)_{n \in \mathbb{N}}$ is extremal if and only if it is a weak limit of finite bridges.
\end{itemize}
\end{theorem}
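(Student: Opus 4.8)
The plan is to obtain both statements from general Doob--Martin boundary theory together with the explicit combinatorial structure of the backward transition probabilities recorded in Proposition~\ref{P:Markov_property}(ii). Part~(i) is a completely general fact about the extremal elements of the set of infinite bridges (equivalently, the minimal harmonic functions in the Doob--Martin compactification): for any Markov chain, a bridge is extremal in the simplex of bridges if and only if its tail $\sigma$-field is trivial. This is classical (see, e.g., the references to Doob--Martin theory already invoked in Section~\ref{S:bridges}), so for~(i) I would simply cite that general equivalence, noting that it applies verbatim to $(\bar R_n)_{n\in\mathbb N}$ since the state space $\bigcup_n \bar{\mathbb S}_n$ is countable and the chain is transient with the one-sided bridge structure described above. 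The real content is therefore in~(ii).

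For~(ii), one direction is again general: every extremal infinite bridge is a weak limit of finite bridges. This is the statement that the Doob--Martin boundary is the closure of the minimal points, and it was already recalled in the paragraph preceding Theorem~\ref{T:radix_sort}. So the substantive claim is the converse: \emph{every} weak limit of finite bridges for $(\bar R_n)_{n\in\mathbb N}$ is extremal, i.e.\ has trivial tail. The natural strategy is to transfer this property from the radix sort chain, where it is already known (Theorem~\ref{T:radix_sort}(ii)), using the PATRICIA contraction $\Phi$ and Proposition~\ref{P:bridges_to_bridges}. Concretely: suppose $(\bar R^\infty_n)_{n\in\mathbb N}$ is a weak limit of finite PATRICIA bridges with endpoints $\bar{\mathbf t}_k$. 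Lift each endpoint to a radix sort tree $\mathbf t_k$ with $\Phi(\mathbf t_k)=\bar{\mathbf t}_k$ (for instance, the unique full binary tree itself viewed in $\mathbb S$), pass to a subsequence along which the finite radix sort bridges $(\prescript{\gamma}{}{R}^{\mathbf t_k}_n)_n$ converge to an infinite radix sort bridge $(R^\infty_n)_n$ — possible by the diagonal/tightness argument sketched in Section~\ref{S:bridges}, since for each fixed $n$ the relevant distributions live on the finite set $\mathbb S_n$ — and check that $\Phi$ maps this convergence onto the convergence defining $(\bar R^\infty_n)_n$, so that $(\bar R^\infty_n)_n = (\Phi(R^\infty_n))_n$ in distribution. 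By Theorem~\ref{T:radix_sort}(ii) the bridge $(R^\infty_n)_n$ is extremal, hence by Theorem~\ref{T:radix_sort}(i) its tail is trivial; since $\Phi$ is applied coordinatewise, the tail $\sigma$-field of $(\bar R^\infty_n)_n$ is contained in that of $(R^\infty_n)_n$ and is therefore also trivial; by part~(i) of the present theorem, $(\bar R^\infty_n)_n$ is extremal.

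The main obstacle I anticipate is the lifting/compatibility step: one must ensure that a convergent sequence of finite \emph{PATRICIA} bridges can be realized as the $\Phi$-image of a convergent sequence of finite \emph{radix sort} bridges, and that the limit is genuinely a radix sort bridge and not merely some limiting object. Two subtleties arise. First, the map from finite radix sort bridges to finite PATRICIA bridges is not a bijection — many $\mathbf t\in\mathbb S_{m}$ have the same image $\bar{\mathbf t}$, and the conditional law of $(\prescript{\gamma}{}{R}^{\mathbf t}_n)_n$ does depend on the choice of lift — so one cannot simply invert; instead I would argue that \emph{any} sequence of lifts works after passing to a subsequence, using compactness of the space of bridge laws and the fact (implicit in Remark~\ref{R:Dynkin} and Proposition~\ref{P:bridges_to_bridges}) that $\Phi$ intertwines the backward dynamics. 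Second, one should confirm that the limiting radix sort process has $R^\infty_1=\emptyset$ and the correct backward transitions, i.e.\ is a bona fide infinite radix sort bridge; this follows because backward transition probabilities are preserved under weak limits of finite-dimensional distributions on countable state spaces. Once these points are nailed down the rest is bookkeeping, and the two-line tail-triviality transfer through $\Phi$ closes the argument.
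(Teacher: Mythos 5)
Your part (i) is fine as stated---the equivalence ``extremal $\Leftrightarrow$ trivial tail $\sigma$-field'' is indeed a general fact about bridges of Markov chains (Dynkin's entrance-boundary theory), and the paper likewise just cites a reference rather than rederiving it. The problem is part (ii), and specifically the lifting step.

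The claim that ``for each fixed $n$ the relevant distributions live on the finite set $\mathbb S_n$'' is false: $\mathbb S_n$ is countably infinite for $n\ge 2$ (e.g., $\mathbb S_2$ contains $\{\emptyset,0,1\}$, $\{\emptyset,0,00,01\}$, $\{\emptyset,0,00,000,001\}$, \ldots). Only $\bar{\mathbb S}_n$ is finite, with $\#\bar{\mathbb S}_n=C_{n-1}$. Consequently the tightness you need for the diagonal argument is not automatic, and in fact it genuinely fails. Take the deterministic zig-zag endpoints $\bar{\mathbf t}_N$ from Example~\ref{E:zig-zag_path} (say with all $\epsilon_k=0$, so leaves $\{1,01,001,\ldots,0^{N-2}1,0^{N-1}\}$), and any lift $\mathbf t_N\in\mathbb S_N$ with $\Phi(\mathbf t_N)=\bar{\mathbf t}_N$. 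The distribution of $\prescript{\gamma}{}{R}^{\mathbf t_N}_2$ is that of the two-leaf radix sort tree built from a uniform random pair of leaves of $\mathbf t_N$, whose depth is at least the minimum index of the chosen pair, and $\mathbb P\{\min\le m\}\approx 2m/N\to 0$ for every fixed $m$. So the laws of $\prescript{\gamma}{}{R}^{\mathbf t_N}_2$ escape to infinity in $\mathbb S_2$ and no subsequence of the lifted finite radix sort bridges converges. More conceptually: Remark~\ref{R:ePb} and Example~\ref{E:zig-zag_path} show that not every extremal infinite PATRICIA bridge is the $\Phi$-image of an (extremal) infinite radix sort bridge, so \emph{any} argument that proceeds purely by pulling back through $\Phi$ and invoking Theorem~\ref{T:radix_sort} cannot reach all limits of finite PATRICIA bridges. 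Proposition~\ref{P:bridges_to_bridges} gives the pushforward direction only; there is no inverse. The paper's actual proof of part (ii) does not go through the radix sort chain at all---it appeals to Proposition~5.19 and the proof of Corollary~5.21 in \cite{MR3601650}, where the result is established via the didendritic-system encoding and exchangeability machinery (see also the warning in Remark~\ref{R:EGW2corr} about a flawed shortcut in that reference).
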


\begin{proof}
See Proposition 5.19 and the proof of Corollary 5.21 in \cite{MR3601650}.
\end{proof}

\begin{remark}\label{R:EGW2corr} Let us comment briefly on the proof of Proposition 5.19 in \cite{MR3601650}.
Remark 5.20 in  in that paper says that this proposition can also be proved along the lines of Lemma 5.3 therein. As pointed out to us by Julian Gerstenberg and Rudolf Gr\"ubel, the argument in the proof of Lemma 5.3 and consequently Remark~5.20 is incorrect. However, this does not invalidate the proof of Proposition 5.19 given directly after the statement, as this proof is along completely different lines. Also, this does not invalidate any of the remainder of \cite{MR3601650} since Lemma 5.3 and Remark 5.20 of that paper are not used further therein.
\end{remark}

The following proposition gives a way of producing (extremal) infinite
PATRICIA bridges from (extremal) infinite radix sort bridges.

\begin{proposition}
\label{P:bridges_to_bridges}
If $(R_n^\infty)_{n \in \mathbb{N}}$ is an infinite bridge for $(R_n)_{n \in \mathbb{N}}$, then
$(\bar R_n^\infty)_{n \in \mathbb{N}}:= (\Phi(R_n^\infty))_{n \in \mathbb{N}}$ is an infinite bridge for
$(\bar R_n)_{n \in \mathbb{N}}$.
Moreover, if $(R_n^\infty)_{n \in \mathbb{N}}$ is extremal and thus of the form
$(\prescript{\nu}{}{R}_n)_{n \in \mathbb{N}}$ for some diffuse probability measure $\nu$, then
$(\bar R_n^\infty)_{n \in \mathbb{N}} = (\prescript{\nu}{}{\bar R}_n)_{n \in \mathbb{N}}$
is extremal.
\end{proposition}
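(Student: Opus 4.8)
The plan is to reduce the first assertion to a single combinatorial identity intertwining the maps $\kappa$, $\bar\kappa$ and $\Phi$, and then to read the ``moreover'' off the characterizations of extremality in Theorems~\ref{T:radix_sort} and~\ref{T:PATRICIA_i_ii}. The identity I would establish first is that, for every $\mathbf{t} \in \mathbb{S}$ and every leaf $v \in \mathbf{L}(\mathbf{t})$,
\[
\Phi(\kappa(\mathbf{t},v)) = \bar\kappa\bigl(\Phi(\mathbf{t}),\bar v\bigr),
\]
where $v \mapsto \bar v$ denotes the bijection $\mathbf{L}(\mathbf{t}) \to \mathbf{L}(\Phi(\mathbf{t}))$ furnished by the Remark following the definition of the PATRICIA contraction. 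To prove it, use that $\mathbf{t}$ is a radix sort tree and, by Remark~\ref{R:permutation_invariance}, write $\mathbf{t} = \mathbf R(z_1,\dots,z_{n+1})$ with the indexing chosen so that $v$ is the leaf attached to $z_{n+1}$ and hence $\bar v$ is the leaf of $\Phi(\mathbf{t})$ corresponding to $z_{n+1}$; the Remark following the definition of $\kappa$ gives $\kappa(\mathbf{t},v) = \mathbf R(z_1,\dots,z_n)$, the Remark following the definition of $\bar\kappa$ gives $\bar\kappa(\Phi(\mathbf{t}),\bar v) = \Phi(\mathbf R(z_1,\dots,z_n))$, and the two right-hand sides coincide. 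Since $v \mapsto \bar v$ is a bijection, it follows that the multiset $\{\Phi(\kappa(\mathbf{t},v)):v \in \mathbf{L}(\mathbf{t})\}$ equals the multiset $\{\bar\kappa(\Phi(\mathbf{t}),\bar v):\bar v \in \mathbf{L}(\Phi(\mathbf{t}))\}$.

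Granting this, the first assertion follows by the argument of Remark~\ref{R:Dynkin}, now carried out with the infinite bridge $(R_n^\infty)$ in place of $(R_n)$. By the defining property of an infinite bridge and Proposition~\ref{P:Markov_property}(i), the time-reversal of $(R_n^\infty)$ is Markov, and the conditional law of $R_n^\infty$ given $R_{n+1}^\infty = \mathbf{t}$ is that of $\kappa(\mathbf{t},V)$ with $V$ uniform on $\mathbf{L}(\mathbf{t})$. Therefore, for every $\mathbf{t} \in \mathbb{S}_{n+1}$ and $\bar{\mathbf{s}} \in \bar{\mathbb{S}}_n$,
\[
\mathbb{P}\{\Phi(R_n^\infty)=\bar{\mathbf{s}}\mid R_{n+1}^\infty=\mathbf{t}\}
=\frac{1}{n+1}\,\#\{\bar v\in\mathbf{L}(\Phi(\mathbf{t})):\bar\kappa(\Phi(\mathbf{t}),\bar v)=\bar{\mathbf{s}}\},
\]
which depends on $\mathbf{t}$ only through $\Phi(\mathbf{t})$ and coincides with the backward transition probability of $(\bar R_n)$ recorded in Proposition~\ref{P:Markov_property}(ii). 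Dynkin's criterion, applied to the time-reversal of $(R_n^\infty)$ and the function $\Phi$, then yields that $(\Phi(R_n^\infty))$ has a Markov time-reversal, hence is itself Markov, and that its backward transition probabilities are those of $(\bar R_n)$; since moreover $R_1^\infty$ is the trivial one-vertex tree and $\Phi$ maps it to itself, the initial condition required of a bridge holds, so $(\bar R_n^\infty)=(\Phi(R_n^\infty))$ is an infinite bridge for $(\bar R_n)$.

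For the ``moreover'', assume $(R_n^\infty)$ is extremal. By Theorem~\ref{T:radix_sort} it is then of the form $(\prescript{\nu}{}{R}_n)$ for some diffuse $\nu$ and has an almost surely trivial tail $\sigma$-field, and by the definition of the PATRICIA chain $(\bar R_n^\infty)=(\Phi(\prescript{\nu}{}{R}_n))=(\prescript{\nu}{}{\bar R}_n)$. Because $\prescript{\nu}{}{\bar R}_m$ is a measurable function of $\prescript{\nu}{}{R}_m$, we have $\sigma(\prescript{\nu}{}{\bar R}_m:m\ge n)\subseteq\sigma(\prescript{\nu}{}{R}_m:m\ge n)$ for each $n$, so the tail $\sigma$-field of $(\prescript{\nu}{}{\bar R}_n)$ is contained in that of $(\prescript{\nu}{}{R}_n)$ and hence is almost surely trivial; by the first assertion $(\prescript{\nu}{}{\bar R}_n)$ is an infinite bridge for $(\bar R_n)$, so Theorem~\ref{T:PATRICIA_i_ii}(i) shows it is extremal.

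The main content is the commutation identity of the first paragraph, which, however, is essentially immediate from the Remarks following the definitions of $\kappa$ and $\bar\kappa$. The one point requiring care is that Dynkin's criterion must be invoked for the \emph{time-reversal} of $(R_n^\infty)$, not for the forward chain --- for the forward chain and $\Phi$ the criterion fails, as Remark~\ref{R:Dynkin_fail} illustrates --- but this is harmless here, since the identity is purely combinatorial and so the displayed conditional probability depends on $\mathbf{t}$ only through $\Phi(\mathbf{t})$ for \emph{every} $\mathbf{t}\in\mathbb{S}_{n+1}$, not merely for those $\mathbf{t}$ with $\mathbb{P}\{R_{n+1}^\infty=\mathbf{t}\}>0$.
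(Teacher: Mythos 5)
Your proposal is correct, and it is essentially the paper's own argument in a slightly different packaging.  The paper also reduces the first claim to the observation in Remark~\ref{R:Dynkin} that $\bar Q_{n+1,n}(\Phi(\mathbf{t}),\bar{\mathbf{s}}) = \sum_{\mathbf{s}:\Phi(\mathbf{s})=\bar{\mathbf{s}}} Q_{n+1,n}(\mathbf{t},\mathbf{s})$ whenever $\Phi(\mathbf{t})=\bar{\mathbf{t}}$, which is exactly the content of your commutation identity $\Phi(\kappa(\mathbf{t},v))=\bar\kappa(\Phi(\mathbf{t}),\bar v)$; it then verifies the entrance-law balance $\sum_{\bar{\mathbf{t}}}\bar\Lambda_{n+1}(\bar{\mathbf{t}})\bar Q_{n+1,n}(\bar{\mathbf{t}},\bar{\mathbf{s}})=\bar\Lambda_n(\bar{\mathbf{s}})$ by a change of summation, whereas you invoke Dynkin's criterion on the time-reversal to get the Markov property and the kernel in one stroke.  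Your handling of the ``moreover'' part via triviality of the tail $\sigma$-field and Theorems~\ref{T:radix_sort} and~\ref{T:PATRICIA_i_ii} also matches the paper (which in fact gives the same tail-$\sigma$-field argument, plus an alternative via the Hewitt--Savage zero-one law).  If anything, you are a little more explicit than the paper in recording that Dynkin's criterion is what delivers the Markov property of $(\Phi(R_n^\infty))_n$ and in spelling out the combinatorial identity behind Remark~\ref{R:Dynkin}; the paper leaves both somewhat implicit.
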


\begin{proof}
Consider the first claim.  For $n \in \mathbb{N}$, $\mathbf{s} \in \mathbb{S}_n$, $\mathbf{t} \in \mathbb{S}_{n+1}$,
$\bar {\mathbf{s}} \in \bar {\mathbb{S}}_n$, and $\bar {\mathbf{t}} \in \bar {\mathbb{S}}_{n+1}$ put
$\Lambda_n(\mathbf{s}) := \mathbb{P}\{R_n^\infty = \mathbf{s}\}$, $\Lambda_{n+1}(\mathbf{t}) := \mathbb{P}\{R_{n+1}^\infty = \mathbf{t}\}$,
$Q_{n+1,n}(\mathbf{t}, \mathbf{s}) := \mathbb{P}\{R_n^\infty = \mathbf{s} \, | \, R_{n+1}^\infty = \mathbf{t}\} = \mathbb{P}\{R_n = \mathbf{s} \, | \, R_{n+1} = \mathbf{t}\}$,
$\bar \Lambda_n(\bar {\mathbf{s}}) := \mathbb{P}\{\bar R_n^\infty = \bar {\mathbf{s}}\}$, $\bar \Lambda_{n+1}(\bar {\mathbf{t}}) := \mathbb{P}\{\bar R_{n+1}^\infty = \bar {\mathbf{t}}\}$,
and
$\bar Q_{n+1,n}(\bar {\mathbf{t}}, \bar {\mathbf{s}}) := \mathbb{P}\{\bar R_n = \bar {\mathbf{s}} \, | \, \bar R_{n+1} = \bar {\mathbf{t}}\}$.
Because $(R_n^\infty)_{n \in \mathbb{N}}$ is an infinite bridge for $(R_n)_{n \in \mathbb{N}}$ we know that
\[
\sum_{\mathbf{t} \in \mathbb{S}_{n+1}} \Lambda_{n+1}(\mathbf{t}) Q_{n+1,n}(\mathbf{t}, \mathbf{s}) = \Lambda_n(\mathbf{s})
\]
and in order to show that $(\bar R_n^\infty)_{n \in \mathbb{N}}$ is an infinite bridge for $(\bar R_n)_{n \in \mathbb{N}}$
we need to show that
\[
\sum_{\bar {\mathbf{t}} \in \bar {\mathbb{S}}_{n+1}} \bar \Lambda_{n+1}(\bar {\mathbf{t}}) \bar Q_{n+1,n}(\bar {\mathbf{t}}, \bar {\mathbf{s}}) = \bar \Lambda_n(\bar {\mathbf{s}}).
\]
Now 
\[
\bar \Lambda_n(\bar {\mathbf{s}}) = \sum_{\mathbf{s} \in \mathbb{S}_n : \Phi(\mathbf{s}) = \bar {\mathbf{s}}} \Lambda_n(\mathbf{s}),
\]
and
\[
\bar \Lambda_{n+1}(\bar {\mathbf{t}}) = \sum_{\mathbf{t} \in \mathbb{S}_{n+1} : \Phi(\mathbf{t}) = \bar {\mathbf{t}}} \Lambda_{n+1}(\mathbf{t}).
\]
Furthermore, by Remark~\ref{R:Dynkin}
we have
\[
\bar Q_{n+1,n}(\bar {\mathbf{t}}, \bar{ \mathbf{s}}) = \sum_{\mathbf{s} \in \mathbb{S}_n : \Phi(\mathbf{s}) = \bar {\mathbf{s}}} Q_{n+1,n}(\mathbf{t}, \mathbf{s})
\]
when $\Phi(\mathbf{t}) = \bar {\mathbf{t}}$.  Therefore,
\[
\begin{split}
\sum_{\bar {\mathbf{t}} \in \bar {\mathbb{S}}_{n+1}} 
{\bar \Lambda}_{n+1}(\bar {\mathbf{t}}) {\bar Q}_{n+1,n}(\bar {\mathbf{t}}, \bar {\mathbf{s}}) 
 &=
\sum_{\bar {\mathbf{t}} \in \bar {\mathbb{S}}_{n+1}} \sum_{\mathbf{t} \in \mathbb{S}_{n+1} : \Phi(\mathbf{t}) = \bar {\mathbf{t}}} \Lambda_{n+1}(\mathbf{t}) \sum_{\mathbf{s} \in \mathbb{S}_n : \Phi(\mathbf{s}) = \bar {\mathbf{s}}} Q_{n+1,n}(\mathbf{t}, \mathbf{s}) \\
& =
\sum_{\mathbf{s} \in \mathbb{S}_n : \Phi(\mathbf{s}) = \bar {\mathbf{s}}} \sum_{\bar {\mathbf{t}} \in \bar {\mathbb{S}}_{n+1}} \sum_{\mathbf{t} \in \mathbb{S}_{n+1} : \Phi(\mathbf{t}) = \bar {\mathbf{t}}} \Lambda_{n+1}(\mathbf{t}) Q_{n+1,n}(\mathbf{t}, \mathbf{s}) \\ 
& =
\sum_{\mathbf{s} \in \mathbb{S}_n : \Phi(\mathbf{s}) = \bar {\mathbf{s}}} \sum_{\mathbf{t} \in \mathbb{S}_{n+1}} \Lambda_{n+1}(\mathbf{t}) Q_{n+1,n}(\mathbf{t}, \mathbf{s}) \\
& =
\sum_{\mathbf{s} \in \mathbb{S}_n : \Phi(\mathbf{s}) = \bar {\mathbf{s}}} \Lambda_n(\mathbf{s}) \\
& =
\bar \Lambda_n(\bar {\mathbf{s}}),  \\
\end{split}
\]
as required.

Turning to the second claim, suppose that the infinite bridge $(R_n^\infty)_{n \in \mathbb{N}}$ 
is extremal (and hence, by part (iii) of Theorem~\ref{T:radix_sort} is of the form
$(\prescript{\nu}{}{R}_n)_{n \in \mathbb{N}}$ for some diffuse probability measure $\nu$).  
By part (i) of Theorem~\ref{T:radix_sort} the tail $\sigma$-field of 
$(R_n^\infty)_{n \in \mathbb{N}}$ is almost surely trivial.  It follows that the tail $\sigma$-field of
$(\bar R_n^\infty)_{n \in \mathbb{N}}:= (\Phi(R_n^\infty))_{n \in \mathbb{N}}$ is almost surely trivial and hence,
by part (i) of Theorem~\ref{T:PATRICIA_i_ii}, the infinite bridge $(\bar R_n^\infty)_{n \in \mathbb{N}}$
is extremal. (Alternatively, it is immediate from
 Remark~\ref{R:permutation_invariance} and the Hewitt-Savage zero-one law
that $(\bar R_n^\infty)_{n \in \mathbb{N}} = (\prescript{\nu}{}{\bar R}_n)_{n \in \mathbb{N}}$ 
has an almost surely trivial tail $\sigma$-field.)
\end{proof}

\begin{remark}\label{R:ePb}
Comparing Theorem~\ref{T:radix_sort} with Theorem~\ref{T:PATRICIA_i_ii} and taking
Proposition~\ref{P:bridges_to_bridges} into account, it is natural to conjecture
that the analogue of part (iii) of Theorem~\ref{T:radix_sort} holds for
infinite PATRICIA bridges; that is, that the set of extremal PATRICIA bridges is 
exhausted by the PATRICIA chains $(\prescript{\nu}{}{\bar R}_n)_{n \in \mathbb{N}}$, 
with $\nu$ a diffuse probability measure on $\{0,1\}^\infty$.
This, however, is not the case, as the next example shows.
This example recalls the construction from Example~4.1 of \cite{MR3601650} 
of a specific extremal  infinite R\'emy (and hence PATRICIA) bridge based on a 
random zig-zag path with leaves attached to it. 
We will use Corollary~\ref{Cor:PATRICIA_fills_out} to conclude that this extremal
infinite PATRICIA is not of the form $(\prescript{\nu}{}{\bar R}_n)_{n \in \mathbb{N}}$ 
for any diffuse probability measure $\nu$ on $\{0,1\}^\infty$
A concrete description of the extremal infinite PATRICIA bridges comprises the remainder of the paper.
\end{remark}

\begin{example}
\label{E:zig-zag_path} 
Let $\epsilon_2, \epsilon_3, \ldots$ be a sequence of independent identically distributed $\{0,1\}$-valued
random variables with common distribution \mbox{$\mathbb{P}\{\epsilon_k = 0\}$} \mbox{$= \mathbb{P}\{\epsilon_k = 1\}$} $ = \frac{1}{2}$, $2 \le k < \infty$.
For $N \in \mathbb{N}$, define $(\bar R_n^N)_{n \in [N]}$ with $\bar R_n^N \in \bar {\mathbb{S}}_n$ 
for $n \in [N]$ by requiring that $(\bar R_n^N)_{n \in [N]}$ is Markov,
$\bar R_N^N$ has the same distribution as
$\{\emptyset\} \cup \bigcup_{k=2}^N \{\epsilon_2 \ldots \epsilon_k, \, \epsilon_2 \ldots \epsilon_{k-1} \bar\epsilon_k \}$,
and the backward transition probabilities of $(\bar R_n^N)_{n \in [N]}$ are the same as those
of $(\bar R_n)_{n \in \mathbb{N}}$. We may suppose that $\bar R_N^N
= \{\emptyset\} \cup \bigcup_{k=2}^N \{\epsilon_2 \ldots \epsilon_k, \, \epsilon_2 \ldots \epsilon_{k-1} \bar\epsilon_k \}$. It follows from the form of the backward transition probabilities that
for $M \in [N]$, $\bar R_M^N
=
\{\emptyset\} \cup \bigcup_{k=2}^M \{\epsilon_{I_2} \ldots \epsilon_{I_k}, \, \epsilon_{I_2} \ldots \epsilon_{k-1} \bar\epsilon_{I_k} \}$,
where $2 \le I_2 < I_3 < \ldots < I_M \le N$ is a certain uniform random subset of $\{2, \ldots, N\}$ of cardinality $M$ that is independent of
$\epsilon_2, \ldots, \epsilon_N$.   Thus $\bar R_M^N$ has the same distribution as $\bar R_M^M$ and hence, by Kolmogorov's extension theorem,
there exists a Markov process $(\bar R_n^\infty)_{n \in \mathbb{N}}$ such that $(\bar R_n^\infty)_{n \in [N]}$ has the same distribution as
$(\bar R_n^N)_{n \in [N]}$ for any $N \in \mathbb{N}$.  Therefore $(\bar R_n^\infty)_{n \in \mathbb{N}}$ is an infinite bridge
for $(\bar R_n)_{n \in \mathbb{N}}$.  

We can give a pathwise construction of $(\bar R_n^\infty)_{n \in \mathbb{N}}$ as follows.  Let $((Y_n, \eta_n))_{n = 2}^\infty$ be an infinite sequence
of independent identically distributed $[0,1] \times \{0,1\}$-valued random variables such that $Y_n$ has the uniform distribution on $[0,1]$,
$\mbox{$\mathbb{P}\{\eta_n = 0\}$} = \mathbb{P}\{\eta_n = 1\} = \frac{1}{2}$, and $Y_n$ and $\eta_n$ are independent.  For $2 \le n < \infty$, let $\sigma_n$ be the
permutation of $\{2, \ldots, n\}$ such that $Y_{\sigma_n(2)} < Y_{\sigma_n(3)} < \ldots < Y_{\sigma_n(n)}$. For $2 \le k \le n$, put
$\epsilon_{n,k} = \eta_{\sigma_n(k)}$.  Setting $\bar R_1^\infty = \emptyset$ and
$\bar R_n^\infty := \{\emptyset\} \cup \bigcup_{k=2}^n \{\epsilon_{n,2} \ldots \epsilon_{n,k}, \, \epsilon_{n,2} \ldots \epsilon_{n,(k-1)} \bar\epsilon_{n,k} \}$ for $n \ge 2$
produces a process with the desired distribution.
Note that if $\pi$ is a permutation of $\{2,3,\ldots\}$ that leaves every element of $\{N+1, N+2, \ldots\}$ fixed for some $N \ge 2$ and we replace
 $((Y_n, \eta_n))_{n = 2}^\infty$ by  $((Y_{\pi(n)}, \eta_{\pi(n)}))_{n = 2}^\infty$ in this construction, then the values of $\bar R_n^\infty$, $n \ge N$,
are left unchanged.  It therefore follows from the Hewitt-Savage zero-one law that the tail $\sigma$-field of $(\bar R_n^\infty)_{n \in \mathbb{N}}$ is $\mathbb{P}$-a.s. trivial
and thus, by part (i) of Theorem~\ref{T:PATRICIA_i_ii}, this process is an extremal infinite bridge for $(\bar R_n)_{n \in \mathbb{N}}$.

\medskip
\noindent
\begin{minipage}{5.2cm}
Now, for any $n \in \mathbb{N}$ the random tree $\bar R_n^\infty$ is a zig-zag path of length 
$n-1$ with leaves attached to it.  It follows that for any $m \in \mathbb{N}$ the random tree 
$\bigcap_{n \ge m}\bar R_n^\infty$ is a zig-zag path of length at most \mbox{$m-1$} with leaves 
attached to it
and hence $\bigcup_{m \in \mathbb{N}} \bigcap_{n \ge m}\bar R_n^\infty$ is a 
zig-zag path of possibly infinite length with leaves attached to it and is certainly not
all of $\{0,1\}^\star$.  
\end{minipage}
\qquad \quad 
\begin{minipage}{4.9cm}
\qquad \qquad   \includegraphics[height=3cm]{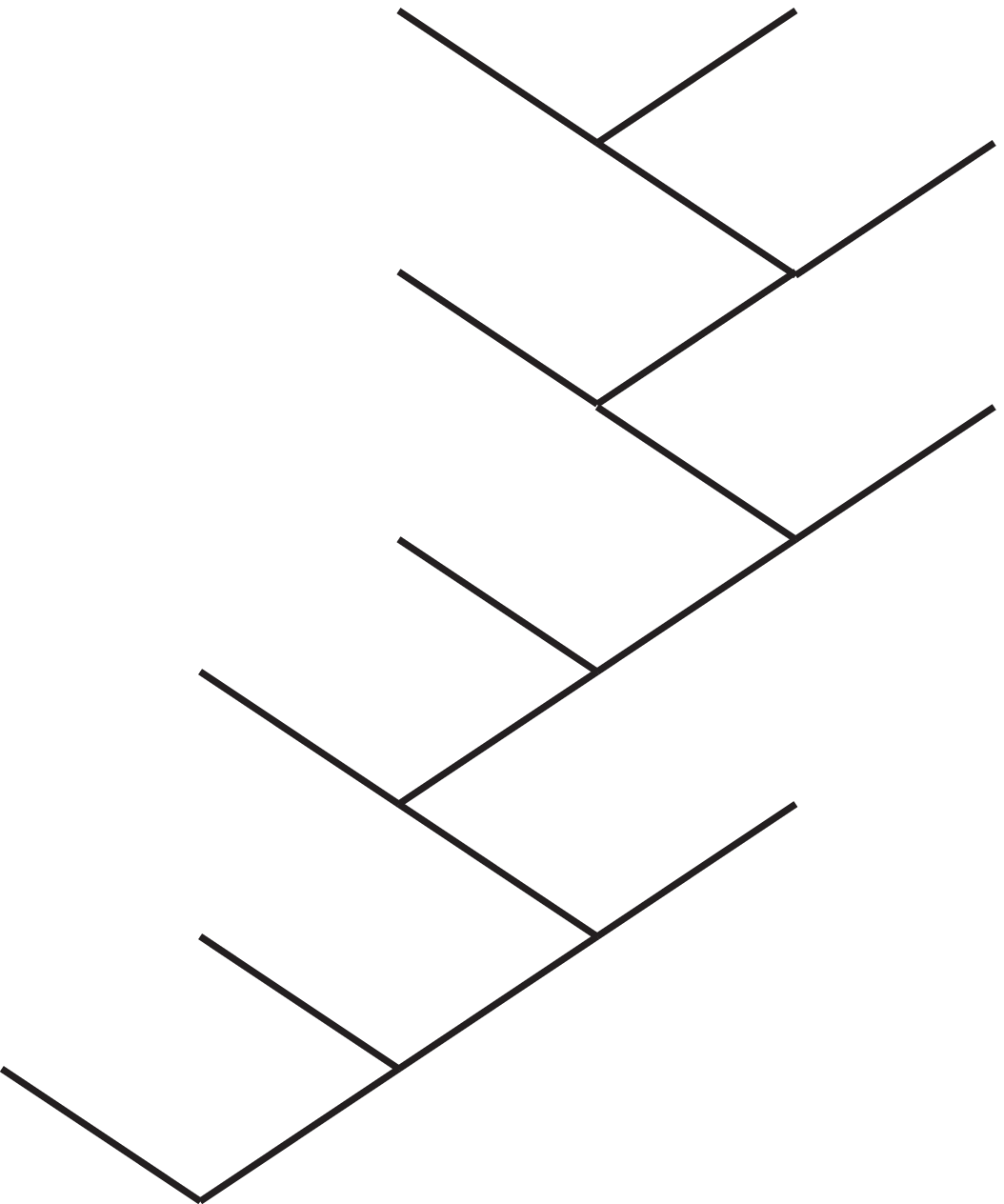}\\
 \textsc{Fig.~4.1} \,\, A realization  of  $\bar R_9^\infty$.
\end{minipage}

\vspace{0.1cm}
\noindent
It is thus clear from
Corollary~\ref{Cor:PATRICIA_fills_out} that the extremal infinite bridge  
$(\bar R_n^\infty)_{n \in \mathbb{N}}$ is not of the form 
$(\prescript{\nu}{}{\bar R}_n)_{n \in \mathbb{N}}$
for any diffuse probability measure~$\nu$.  

For the sake of completeness, we note that if we fix
$m \in \mathbb{N}$, then for $n \ge m$ the restriction of $\bar R_n^\infty$ to height $m-1$
is uniformly distributed over the set of zig-zag paths of length $m-1$ with leaves attached.
 Moreover, if $m \le N_1 < N_2 < \ldots < N_M$ for some fixed $M \in \mathbb{N}$, then the
restrictions of $\bar R_{N_1}^\infty, \bar R_{N_2}^\infty, \ldots, 
\bar R_{N_M}^\infty$ to height $m-1$ become asymptotically independent as 
$N_2-N_1, \ldots, N_M-N_{M-1} \to \infty$.  It follows that almost surely 
$\bigcup_{m \in \mathbb{N}} \bigcap_{n \ge m}\bar R_n^\infty$ is just the set $\{\emptyset, 0, 1\}$.
Incidentally, from these same observations it is clear that 
$\bigcap_{m \in \mathbb{N}} \bigcup_{n \ge m}\bar R_n^\infty = \{0,1\}^\star$ almost surely. This latter property would therefore not suffice to discriminate $(\bar R_n^\infty)_{n \in \mathbb{N}}$ from the extremal infinite bridges of the form $(\prescript{\nu}{}{\bar R}_n)_{n \in \mathbb{N}}$.
\end{example}
Our next goal is to introduce a class of extremal infinite bridges for $(\bar R_n)_{n \in \mathbb{N}}$
that subsumes both the class of Markov chains of the form $(\prescript{\nu}{}{\bar R}_n)_{n \in \mathbb{N}}$ and
the Markov chain in Example~\ref{E:zig-zag_path}. 
In fact, using Proposition~\ref{P:Markov_property} and Remark~\ref{R:Remy} 
(showing that the common backward transitions of the PATRICIA chains coincide with those of the R\'emy chain)  
we will be able to obtain a representation for {\em all}  the extremal infinite PATRICIA bridges. 
This will be achieved through the concept of a
{\em didendritic system} introduced in   Definition~5.8 of \cite{MR3601650}  
and revisited in Section~\ref{S:DDS}.

Before moving on to this characterization of all the extremal
infinite PATRICA bridges, we include the following section which
gives some idea of the divergent sample path behavior displayed by different
infinite bridges.


\section{Heights of trees in an infinite PATRICIA bridge}
\label{S:heights}

Given $\bar{\mathbf{t}} \in \bar{\mathbb{S}}_n$ for some $n \ge 0$ with leaves $y_1, \ldots, y_n$
let $\mathrm{ht}(\bar{\mathbf{t}})$ denote the height of $\bar{\mathbf{t}}$; that is,
$\mathrm{ht}(\bar{\mathbf{t}}) := \max_{1 \le k \le n} |y_k|$.  It is shown in 
\cite{MR1151362} that
\[
\lim_{n \to \infty} (\mathrm{ht}(\bar R_n) - \log_2 n)/\sqrt{2 \log_2 n} = 1
\]
almost surely so that, in particular,
\[
\lim_{n \to \infty} \mathrm{ht}(\bar R_n) / \log_2 n = 1
\]
almost surely.

It is of interest to compare this asymptotic behavior with that of other
infinite PATRICIA bridges.  The infinite PATRICIA bridge $(\bar R_n^\infty)_{n \in \mathbb{N}}$
of Example~\ref{E:zig-zag_path}
clearly has $\mathrm{ht}(\bar R_n^\infty) = n-1$ for all $n \in \mathbb{N}$.  On the other
hand if $(\bar R_n^\infty)_{n \in \mathbb{N}}$ is the R\'emy tree growth chain
(which we have observed is an infinite PATRICIA bridge),
then it follows from the results of \cite{MR2042386} that
\[
\lim_{n \to \infty} \mathrm{ht}(\bar R_n^\infty) / \sqrt{n}
\]
exists and is strictly positive almost surely.

As one last example, it has been pointed out to us by Ralph Neininger that
if $\nu = \bigotimes_{j =1}^\infty \mathrm{Ber}\left(\frac{1}{j+1}\right)$,
where $\mathrm{Ber}(p)$ is the Bernoulli probability measure on $\{0,1\}$ that
assigns mass $1-p$ to $0$ and mass $p$ to $1$, then
\[
\limsup_{n \to \infty} \mathrm{ht}(\prescript{\nu}{}{\bar R}_n)
/\sqrt{2n/\log n} \ge 1
\]
almost surely.  For the sake of completeness, we sketch Neininger's argument.

Suppose that $Z_1, Z_2, \ldots$ is an infinite i.i.d. sequence of random
elements of $\{0,1\}^\infty$ where $Z_n = (Z_{n,1}, Z_{n,2}, \ldots)$, $n \in \mathbb{N}$,
with $Z_{n,1}, Z_{n,2}, \ldots$ independent, $\mathbb{P}\{Z_{n,j} = 0\} = \frac{j}{j+1}$,
and $\mathbb{P}\{Z_{n,j} = 1\} = \frac{1}{j+1}$, $j \in \mathbb{N}$.  
Let $\prescript{\nu}{}{\bar R}_n$, $n \in \mathbb{N}$, be the PATRICIA tree constructed
from the inputs $Z_1, \ldots, Z_n$.

Consider the event
\[
A_{n,t} := \bigcap_{\ell=1}^t \bigcup_{m=1}^n \{(Z_{m,1}, \ldots, Z_{m,\ell}) = (0,\ldots,0,1)\}.
\]
Observe that on the event $A_{n,t}$ the tree $\prescript{\nu}{}{\bar R}_n$
contains the vertices $\emptyset, 1, 01, 001, \ldots, 0 \ldots 01$, where the last
sequence has $t-1$ zeros.  Consequently, 
$\mathrm{ht}(\prescript{\nu}{}{\bar R}_n) \ge t$ on the event $A_{n,t}$.
It therefore suffices to show that $\lim_{n \to \infty} \mathbb{P}(A_{n,t(n)}) = 1$,
where $t(n) = \lfloor \sqrt{2n/\log n} -1 \rfloor$. 

Now
\[
\begin{split}
1 - \mathbb{P}(A_{n,t}) 
&= 
\mathbb{P}(A_{n,t}^c) \\
&=
\mathbb{P}\left(\bigcup_{\ell=1}^t \bigcap_{m=1}^n \{(Z_{m,1}, \ldots, Z_{m,\ell}) \ne (0,\ldots,0,1)\}\right) \\
& \le
\sum_{\ell=1}^t \mathbb{P}\left(\bigcap_{m=1}^n \{(Z_{m,1}, \ldots, Z_{m,\ell}) \ne (0,\ldots,0,1)\}\right) \\
& =
\sum_{\ell=1}^t \left(1 - \frac{1}{\ell+1} \prod_{j=1}^{\ell-1} \frac{j}{j+1} \right)^n \\
&=
\sum_{\ell=1}^t \left(1 - \frac{1}{\ell(\ell+1)}\right)^n 
\\
&
 \le
t \left(1 - \frac{1}{t(t+1)}\right)^n \\
& \le
(t+1) \left(1 - \frac{1}{(t+1)^2}\right)^n  \le
(t+1) \exp\left(-\frac{n}{(t+1)^2}\right). 
\end{split}
\]
Thus
\[
\begin{split}
1 - \mathbb{P}(A_{n,t(n)})
& \le
\sqrt{2n/\log n} \exp\left(-\frac{n}{(\sqrt{2n/\log n})^2}\right) \\
& =
\sqrt{2n/\log n} \exp\left(-\frac{\log n}{2}\right) \\
& =
\sqrt{2 /\log n} \\
& \to 0 \quad \text{as $n \to \infty$}, \\
\end{split}
\]
as required.

\section{Didendritic systems}
\label{S:DDS}
We now prepare some notation and definitions that serve to introduce the concept of a didendritic system. As announced at the end of Section~\ref{S:bridges}, this concept will help to obtain a representation for {\em all}  the extremal infinite PATRICIA bridges. 

With the partial order $<$ defined on the set $\{0,1\}^\star \sqcup \{0,1\}^\infty$, we define, for any two elements  
$u,v$ of this set, the {\em most recent predecessor} $u \wedge v$ of   
$u$ and~$v$. More precisely, for each such  $u,v$ there  is a unique $w \in \{0,1\}^\star \sqcup \{0,1\}^\infty$
with the properties $w \le u$, $w \le v$, and if $x$ is any element of $\{0,1\}^\star \sqcup \{0,1\}^\infty$
with $x \le u$ and $x \le v$, then $x \le w$.  Denote this $w$ by $u \wedge v$.  For example,
if $u = u_1 \ldots u_m$ and $v = v_1 \ldots v_n$ are elements of $\{0,1\}^\star$, then
$u \wedge v = u_1 \ldots u_p = v_1 \ldots v_p$, where 
$p:= \max\{q  :1 \le q \le m \wedge n, \, u_1 \ldots u_q = v_1 \ldots v_q\}$ and the maximum of
the empty set is defined to be zero.

We define two further partial orders $<_L$ and $<_R$ on $\{0,1\}^\star$ by declaring that
$u_1 \ldots u_m <_L v_1 \ldots v_n$ 
(resp. $u_1 \ldots u_m <_R v_1 \ldots v_n$) if
$m < n$ and $v_1 \ldots v_{m+1} = u_1 \ldots u_m 0$ 
(resp. $m < n$ and $v_1 \ldots v_{m+1} = u_1 \ldots u_m 1$),

\begin{definition}
\label{D:finite_didendritic}
Given $\bar {\mathbf{t}} \in \bar {\mathbb{S}}_n$, consider a bijective labeling of the leaves of~$\bar {\mathbf{t}}$ 
by a set $\mathcal{N}$ with $\# \mathcal{N} = n$ (that is, each leaf receives a distinct label).  
Use the notation $\tilde {\mathbf{t}}$ to denote the labeled object.  
Define an equivalence relation $\equiv$ on $\mathcal{N} \times \mathcal{N}$ by declaring that
$(g,h)$ and $(i,j)$ are equivalent if $g,h,i,j$  label respectively
leaves $u,v,w,x$ such that $u \wedge v = w \wedge x$. 
Denote by $\langle i,j \rangle$ the equivalence class containing $(i,j) \in \mathcal{N} \times \mathcal{N}$.
Note that $(i,i)$ is the only pair in the equivalence class $\langle i, i \rangle$ so we
will usually denote this equivalence class more simply by $i$.  If $k,\ell$ label respectively the
leaves $y,z$, then label the vertex $y \wedge z$ with $\langle k, \ell \rangle$.

With a slight abuse of notation, 
define a partial order $<_L$ (resp. $<_R$) on $\{\langle i,j \rangle: i,j \in \mathcal{N}\}$ by
declaring that $\langle g,h \rangle <_L \langle i,j \rangle$ if $\langle g,h \rangle$
 labels a vertex $u$ and  $ \langle i,j \rangle$ labels a vertex $v$ such that
$u <_L v$ (resp. $u <_R v$). Similarly, define a third partial order $<$ by declaring that
$\langle g,h \rangle  < \langle i,j \rangle$
if $\langle g,h \rangle$ labels a vertex $u$ and $\langle i,j \rangle$ labels a vertex $v$
with $u < v$.
\end{definition}
The equivalence relation $\equiv$ and the partial orders $<_L$, $<_R$, and $<$ 
have the following properties.
\begin{itemize}
\item[(A)]
For $i,j \in \mathcal N$, $(i,j) \equiv (j,i)$.
\item[(B)] 
For distinct $i,j \in \mathcal N$, either $\langle i,j\rangle <_L  \langle i,i\rangle$ and $\langle i,j\rangle <_R  \langle j,j\rangle$, 
or $\langle i,j\rangle <_R  \langle i,i\rangle$ and $\langle i,j\rangle <_L  \langle j,j\rangle$.
\item[(C)] {\em ``Triplet property''}  For distinct $i, j, k$, exactly one of \\
\phantom{AAAAA}$\langle i,j\rangle = \langle i,k\rangle < \langle j,k\rangle$
\\
\phantom{AAAAA}$\langle j,k\rangle = \langle j,i\rangle < \langle k,i \rangle$
\\
or\\
\phantom{AAAAA}$\langle k,i\rangle = \langle k,j\rangle < \langle i,j \rangle$ \\
is valid.
\item[(D)] For $i,j,k,\ell \in \mathcal N$, at most one of the relations $\langle i,j \rangle <_L \langle k,\ell \rangle$ and $\langle i,j \rangle <_R \langle k,\ell \rangle$ can hold and 
$\langle i,j \rangle < \langle k,\ell \rangle$ if and only if either $\langle i,j \rangle <_L\langle k,\ell \rangle$ or $\langle i,j \rangle <_R\langle k,\ell \rangle$.
\item[(E)] Fix $f,g,h,i,j,k \in \mathcal N$. 
If
$\langle f,g \rangle <_L \langle h,i \rangle  < \langle j,k \rangle$, then
$\langle f,g \rangle <_L \langle j,k \rangle$.
Similarly, if
$\langle f,g \rangle <_R \langle h,i \rangle  < \langle j,k \rangle$, then
$\langle f,g \rangle <_R \langle j,k \rangle$.
\end{itemize}

\begin{definition}
\label{D:didendritic}
A {\em didendritic system} $\mathcal D = (\mathcal N, \equiv, \langle \cdot, \cdot \rangle, <_L, <_R, <)$ with the 
non-empty (possibly infinite) {\em label set} $\mathcal N$
is the set $\mathcal N \times \mathcal N$
equipped with an equivalence relation $\equiv$,
equivalence classes $\langle \cdot, \cdot \rangle$,
and partial orders $<_L$, $<_R$ and $<$ on the set of equivalence classes
such that the above stated properties (A)-(E) hold. 
\end{definition}

\begin{remark}
We show in Proposition~\ref{DDS_tree} that any didendritic system with a finite label set may be thought of as a leaf-labeled full binary tree. This claim was made in Remark~5.10 of \cite{MR3601650}. The axioms in Definition~\ref{D:didendritic} differ from those in  \cite{MR3601650}: the former are equivalent to the latter plus the triplet property (C). The Example~\ref{E:not_a_DDS} shows that the conclusion of Proposition~\ref{DDS_tree} does not hold under the axioms of \cite{MR3601650}, contrary to the assertion of 
Remark~5.10 of \cite{MR3601650}.  However, the omission of axiom (C) in \cite{MR3601650} is not critical: the development there depends solely
on the conclusion of Remark~5.10 therein rather than the specific axioms that one uses to characterize a didendritic system.  In brief, the arguments in
\cite{MR3601650} become correct once one adds axiom (C) to the definition there of a didendritic system.
\end{remark}

\begin{example}
\label{E:not_a_DDS}
Consider the equivalence relation on $[3] \times [3]$ defined by 
$$(h,i)\equiv (j,k) \mbox{ if an only if } h=j \mbox{ and } i=k, \mbox{ or } h=k \mbox{ and } i=j.$$
For $(i, j) \in \{(1,2),(1,3), (2,3)\} $ say that $\langle i,j\rangle <_L \langle i,i\rangle$ and $\langle i,j\rangle <_R \langle j,j\rangle$. Moreover, say that $\langle h,i \rangle < \langle j,k \rangle$ if $\langle h,i \rangle <_L \langle j,k \rangle$ or $\langle h,i \rangle <_R \langle j,k \rangle$. Then $([3], \equiv, <_L, <_R,<)$ meets the axioms of Definition 5.8 of \cite{MR3601650}. However,  this system  does not correspond to a binary tree with 3~leaves, since this would require exactly $5$ equivalence classes (each one corresponding to a vertex of the tree), whereas this system has $3+3 = 6$ equivalence classes.
\end{example}

\begin{lemma}
\label{ordered_triplets}
 {\em Any didendritic system with $\# \mathcal N \ge 3$ has  the following property}
\begin{itemize}
\item[(F)] For distinct $i,j,k \in \mathcal N$, there exists a bijective mapping from $\{\langle g,h\rangle: g,h \in \{i,j,k\}\}$ to the set of vertices of a full binary tree ${\tilde{\mathbf  t}}$ with three leaves labeled by $i,j,k$ which  preserves the three partial orders and, for $g \in \{i,j,k\}$, maps $\langle g,g\rangle$ to the leaf of ${\tilde{\mathbf  t}}$ labeled $g$.
\end{itemize}
\end{lemma}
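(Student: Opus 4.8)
The plan is to reduce the statement to a finite bookkeeping exercise on the at most six equivalence classes $\langle g,h\rangle$ with $g,h\in\{i,j,k\}$ and the three partial orders restricted to them. First I would count these classes. By axiom (A) they are the three \emph{diagonal} classes $\langle i,i\rangle,\langle j,j\rangle,\langle k,k\rangle$ together with $\langle i,j\rangle,\langle i,k\rangle,\langle j,k\rangle$. By the triplet property (C), after relabeling $i,j,k$ (which merely permutes the three cases of (C)) I may assume we are in the case $\langle i,j\rangle=\langle i,k\rangle=:a$ with $a<\langle j,k\rangle=:b$, so in particular $a\ne b$. Applying (B) to each of the pairs $\{i,j\},\{i,k\},\{j,k\}$ and invoking (D), every off-diagonal class is strictly below, in $<$, each of its two diagonal classes; hence $a<\langle i,i\rangle$, $a<\langle j,j\rangle$, $a<\langle k,k\rangle$ and $b<\langle j,j\rangle$, $b<\langle k,k\rangle$. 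Since $<$ is a strict partial order this already separates $a$ and $b$ from the three diagonal classes, with the single exception of the possibility $b=\langle i,i\rangle$; and the same use of (B) together with (D) makes the diagonal classes pairwise distinct (if, say, $\langle i,i\rangle=\langle j,j\rangle$, then $a=\langle i,j\rangle$ would be both $<_L$ and $<_R$ that class, contradicting (D)).

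The heart of the argument, and the step I expect to be the main obstacle, is to rule out $b=\langle i,i\rangle$, together with the closely related task of pinning down the left/right structure. After interchanging the roles of the labels $L$ and $R$ throughout, if necessary, I would assume $a<_L\langle i,i\rangle$; then (B) forces $a<_R\langle j,j\rangle$ and (applied to $\{i,k\}$) $a<_R\langle k,k\rangle$. Since $a<b<\langle j,j\rangle$, if we had $a<_L b$ then axiom (E) would give $a<_L\langle j,j\rangle$, contradicting $a<_R\langle j,j\rangle$ via (D); hence $a<_R b$. If now $b=\langle i,i\rangle$, then $a<_R b$ reads $a<_R\langle i,i\rangle$, contradicting $a<_L\langle i,i\rangle$ by (D). So there are exactly five classes, $a,b,\langle i,i\rangle,\langle j,j\rangle,\langle k,k\rangle$. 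The same "wrong orientation forces both $<_L$ and $<_R$, contradicting (D)" manoeuvre, using (E) to propagate a $<_L$ or $<_R$ relation upward through a $<$ relation, then determines all of $<$ on these five classes: $a$ lies below all of the other four, $b$ lies below exactly $\langle j,j\rangle$ and $\langle k,k\rangle$, and $\langle i,i\rangle$ is $<$-incomparable with each of $b$, $\langle j,j\rangle$, $\langle k,k\rangle$ (for instance $b<\langle i,i\rangle$ would give $a<_R\langle i,i\rangle$ via $a<_R b$ and (E)).

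It remains to exhibit $\tilde{\mathbf t}$ and the bijection. A full binary tree with three leaves has exactly five vertices — the root $\emptyset$, one further internal vertex, and the three leaves — and is necessarily a cherry with one leaf attached near the root; so I set $a\mapsto\emptyset$, $b\mapsto$ the internal child of the root, $\langle i,i\rangle\mapsto$ the leaf child of the root, and $\langle j,j\rangle,\langle k,k\rangle\mapsto$ the two children of $b$. The two remaining binary choices are read off from (B): leaf $i$ is the left (resp. right) child of the root according as $a<_L\langle i,i\rangle$ (resp. $a<_R\langle i,i\rangle$), and leaf $j$ is the left (resp. right) child of $b$ according as $b<_L\langle j,j\rangle$ (resp. $b<_R\langle j,j\rangle$); by (B) this simultaneously and consistently orients leaf $k$ and the vertex $b$. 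That this map preserves $<$, $<_L$ and $<_R$ is then a direct finite check: the $<$-relations are those found in the previous paragraph; the $<_L$/$<_R$ decorations of the covering relations $a<\langle i,i\rangle$, $a<b$, $b<\langle j,j\rangle$, $b<\langle k,k\rangle$ are exactly those supplied by (B), (D) and (E) above (in particular $a<_R b$ while $a<_L\langle i,i\rangle$, so leaf $i$ and the $jk$-subtree genuinely sit on opposite sides of the root); and the negative statements — that no $<_L$ or $<_R$ relation holds between $<$-incomparable classes, and that the "wrong" one of $<_L,<_R$ never holds where $<$ does — follow from (D) together with the incomparabilities. Apart from that one recurring collision argument, the only care required is to note that the whole discussion is symmetric under $L\leftrightarrow R$ and under permutations of $\{i,j,k\}$, so the three cases of (C) and the $L$/$R$ ambiguity do not actually multiply the work.
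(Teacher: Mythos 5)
Your proof is correct and follows essentially the same route as the paper: use (C) to reduce to a canonical triplet ordering, then (B), (D), and (E) to pin down the left/right structure and exhibit the five-vertex cherry. You are somewhat more explicit than the paper in verifying bijectivity (exactly five distinct classes, ruling out $b=\langle i,i\rangle$) and the required $<$-incomparabilities, but the underlying argument is the same.
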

\begin{proof} Assume that the first of the three possible set of relations in (C) is valid, i.e.
  $\langle i,j\rangle = \langle i,k\rangle  < \langle j,k\rangle$. Then we must have by (B) that 
$\langle i,j\rangle = \langle i,k\rangle  < \langle j,k\rangle < j$, 
and 
$\langle i,j\rangle < i$.

Applying (B) to $i$ and $j$, we have either $\langle i,j\rangle <_L i$ and $\langle i,j\rangle <_R j$ or $\langle i,j\rangle <_L j$ and $\langle i,j\rangle <_R i$. Assume the former. Then (E) (together with (B)) enforces 
$\langle i,j\rangle <_R \langle j,k\rangle$.

Now applying  (B) to $j$ and $k$, we have either $\langle j,k\rangle <_L j$ and $\langle j,k\rangle <_R k$ or $\langle j,k\rangle <_L k$ and $\langle j,k \rangle <_R j$. Assume the former.  This then results in the full binary tree $\tilde{\mathbf t}$ with vertex set $\{\emptyset, 0,1, 10,11\}$, where  $\emptyset$ is labeled by $\langle i,j\rangle = \langle i,k\rangle$, $1$ is labeled by $\langle j,k\rangle$, and the three leaves $0$, $10$ and $11$ are labeled by $i$, $j$ and $k$, respectively.

Combining all the ways allowed by (C) and (B), we arrive at the $3\times 2\times 2 = 12$~possible leaf-labeled full binary trees with three leaves and label set $\{i,j,k\}$. 
\end{proof}

Let $\tilde{\mathbf t}$ 
be a leaf-labeled full binary tree,  $\mathcal D$ be a didendritic system with finite label set $\mathcal N$, 
and $\psi$ be a mapping from the set of equivalence classes  of $\mathcal D$ to the set of vertices of  $\tilde{\mathbf t}$. 
We call $\psi$ an {\em isomorphism} from $\mathcal D$ to $\tilde{\mathbf t}$ if it is bijective and preserves the three partial orders. 
Necessarily, $\psi$ then maps the label set $\mathcal N$ bijectively to the set of leaves of $\tilde{\mathbf t}$.

\begin{proposition} 
\label{DDS_tree}
Let $\mathcal D = (\mathcal N, \, \equiv, \, \langle\cdot, \cdot \rangle, \, <_L, \, <_R, \, <)$ be a finite didendritic system. Then $\mathcal D$ is isomorphic to some leaf-labeled full binary tree 
$\tilde{\mathbf t}$.  Moreover, both $\tilde{\mathbf t}$ and the isomorphism are unique.
\end{proposition}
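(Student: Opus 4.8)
The plan is to prove existence by induction on $n:=\#\mathcal N$ and then deduce the two uniqueness claims from a rigidity property of leaf-labeled full binary trees. For $n=1$ the only equivalence class is $\langle 1,1\rangle = 1$, which we map to the single vertex $\emptyset$; so suppose $n\ge 2$ and the statement holds for smaller label sets. Fix $1\in\mathcal N$ and consider $C_1:=\{\langle 1,i\rangle : i\in\mathcal N\}$, which morally is the path from the root down to the leaf labeled $1$. First I would show that $C_1$ is totally ordered by $<$: for distinct $i,j\in\mathcal N\setminus\{1\}$ the triplet property (C) applied to $\{1,i,j\}$ asserts that exactly one of $\langle 1,i\rangle=\langle 1,j\rangle$, $\langle 1,i\rangle=\langle i,j\rangle<\langle 1,j\rangle$, $\langle 1,j\rangle=\langle i,j\rangle<\langle 1,i\rangle$ holds (the case $n=2$ being trivial via (B)), so any two members of $C_1$ are $\le$-comparable. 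Let $r:=\min C_1$; by (B) we have $r<1$, so $r$ is an off-diagonal class, and it is destined to be the root.

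Next I would read off the root split from $r$. Put $\mathcal N_R:=\{i\in\mathcal N:\langle 1,i\rangle=r\}$ and $\mathcal N_L:=\mathcal N\setminus\mathcal N_R$; then $1\in\mathcal N_L$, while any $m$ with $\langle 1,m\rangle=r$ lies in $\mathcal N_R$, so both sets are non-empty and each has fewer than $n$ elements. By (B), either $r<_L1$ or $r<_R1$; assume $r<_L1$ (otherwise interchange the names of $\mathcal N_L$ and $\mathcal N_R$). Combining (B), (D) and (E) I would deduce that $r<_Li$ for all $i\in\mathcal N_L$ and $r<_Rj$ for all $j\in\mathcal N_R$. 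Using (C) on triples $\{1,i,j\}$ together with the minimality of $r$ in $C_1$, I would establish: (a) $\langle i,j\rangle>r$ whenever $i,j\in\mathcal N_L$; (b) $\langle i,j\rangle>r$ whenever $i,j\in\mathcal N_R$; (c) $\langle i,j\rangle=r$ whenever $i\in\mathcal N_L$ and $j\in\mathcal N_R$. Say that a class $c$ of $\mathcal D$ \emph{lives in} $\mathcal N_L$ (resp. $\mathcal N_R$) if $c=\langle i,j\rangle$ for some $i,j$ in that set. Since a diagonal class $\langle k,k\rangle$ consists of the single pair $(k,k)$, (a)--(c) show that every class of $\mathcal D$ equals $r$, lives in $\mathcal N_L$, or lives in $\mathcal N_R$; and combining (a)--(b) with (D)--(E) (the latter used to exclude $\langle i,j\rangle=\langle k,\ell\rangle$ with $i,j\in\mathcal N_L$ and $k,\ell\in\mathcal N_R$, by comparing $r<_L\langle i,j\rangle$ with $r<_R\langle k,\ell\rangle$) shows these three alternatives are mutually exclusive.

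The restrictions $\mathcal D_L$, $\mathcal D_R$ of $\mathcal D$ to $\mathcal N_L$, $\mathcal N_R$ are again didendritic systems, since axioms (A)--(E) specialize verbatim and distinct classes of $\mathcal D$ meeting $\mathcal N_L\times\mathcal N_L$ retain distinct traces there. By the induction hypothesis $\mathcal D_L$ is isomorphic to a unique leaf-labeled full binary tree $\tilde{\mathbf t}_L$ via a unique isomorphism $\psi_L$, and likewise $\mathcal D_R\cong\tilde{\mathbf t}_R$ via a unique $\psi_R$. Set $\tilde{\mathbf t}:=\{\emptyset\}\sqcup 0\tilde{\mathbf t}_L\sqcup 1\tilde{\mathbf t}_R$ and define $\psi$ by $\psi(r)=\emptyset$, $\psi(c)=0\psi_L(c)$ if $c$ lives in $\mathcal N_L$, and $\psi(c)=1\psi_R(c)$ if $c$ lives in $\mathcal N_R$ (identifying a class living in $\mathcal N_L$ with its trace in $\mathcal D_L$, which gives a bijection between the two families of classes). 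The partition statement makes $\psi$ well defined and a bijection onto $\tilde{\mathbf t}$, and that it preserves $<$, $<_L$ and $<_R$ follows by a case analysis according to which of the three pieces the two given classes lie in, each case reducing either to a prefix computation in $\{0,1\}^\star$ or to the corresponding property of $\psi_L$ or $\psi_R$, with (a)--(c) and $r<_Li$, $r<_Rj$ handling the mixed cases. This completes the induction and hence the existence part. For uniqueness one notes that a leaf-labeled full binary tree, viewed with $<$, $<_L$, $<_R$, is rigid: an order-preserving bijection $\phi$ between two such trees must fix $\emptyset$ (their common unique $<$-minimum) and then, recursively, fix every vertex, since the two children of an already-fixed internal vertex $v$ are characterized as the $<_L$-minimal and $<_R$-minimal elements lying strictly above $v$; hence $\phi=\mathrm{id}$ and the two trees coincide. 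Consequently, if $\mathcal D\cong\tilde{\mathbf t}$ and $\mathcal D\cong\tilde{\mathbf t}'$ then $\tilde{\mathbf t}\cong\tilde{\mathbf t}'$ and so $\tilde{\mathbf t}=\tilde{\mathbf t}'$; and two isomorphisms $\psi,\psi'\colon\mathcal D\to\tilde{\mathbf t}$ differ by the automorphism $\psi'\circ\psi^{-1}$ of $\tilde{\mathbf t}$, which is the identity, so $\psi=\psi'$.

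I expect the main obstacle to be the construction and verification of the root split in the middle paragraph: correctly extracting $r$ and the bipartition $\mathcal N_L\sqcup\mathcal N_R$ purely from the axioms, and then carrying out the somewhat intricate bookkeeping with the triplet property (C) and axioms (B), (D), (E) needed to prove facts (a)--(c), the partition of the classes of $\mathcal D$, and the order-preservation of $\psi$ across the three pieces.
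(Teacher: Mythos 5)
Your proof is correct, but it takes a genuinely different route from the paper's. The paper proceeds by ``one label at a time'' induction: having built the leaf-labeled full binary tree $\tilde{\mathbf t}_n$ for the restriction of $\mathcal D$ to $[n]$, it pins down $a,b$ with $\phi(\langle a,b\rangle)=\emptyset$ and then uses the triplet property on $\{a,b,n+1\}$ to determine where the new label attaches, splitting into a ``$n+1$ becomes the new root's sibling'' case and a ``$n+1$ lands inside one of the two existing subtrees'' case (the latter invoking a secondary divide-and-conquer). It also first proves an auxiliary triplet lemma (property (F)). You instead do a root-first divide-and-conquer: you identify the root class $r=\min C_1$ intrinsically as the $<$-minimum of the totally ordered chain $\{\langle 1,i\rangle\}$, bipartition $\mathcal N$ into $\mathcal N_L\sqcup\mathcal N_R$, prove that every class of $\mathcal D$ is $r$ or lives exclusively in one side, and recurse on both halves (this is strong induction on $\#\mathcal N$ rather than stepwise). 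You also supply an explicit rigidity argument for uniqueness (a leaf-labeled full binary tree has no nontrivial order-preserving automorphism, since $\emptyset$ is the unique $<$-minimum and the children of an already-fixed $v$ are the $<$-minimal elements of $\{w:v<_L w\}$ and $\{w:v<_R w\}$); the paper's uniqueness claim is left implicit in the determinism of its construction. Your route avoids the separate triplet lemma and gives a self-contained uniqueness argument, at the cost of somewhat more bookkeeping in verifying facts (a)--(c) and the mutual exclusivity of ``lives in $\mathcal N_L$'' versus ``lives in $\mathcal N_R$''; the paper's route ties in more directly with the restriction/projective-limit machinery (Lemmas~\ref{L:restriction}--\ref{L:projective}) used later. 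One small imprecision worth tidying: you should say ``the $<$-minimal element among those $w$ with $v<_L w$'' rather than ``the $<_L$-minimal element lying strictly above $v$'', as $<_L$ is only a partial order.
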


\begin{proof} Denote the cardinality of $\mathcal{N}$ by $n$.

For   $n=2$ the assertion follows directly from properties (A)-(C). 

For $n\ge 3$ we will use that $\mathcal D$ also has property (F) because of Lemma~\ref{ordered_triplets}. 
Indeed, for $n=3$ the assertion is immediate from that property.  

 We now proceed by induction and assume that the assertion is true for some $n \ge 3$.  

For the induction step, assume without loss of generality that 
$\mathcal N_{n+1} =[n+1]$, and let $\mathcal D_{n+1}$ obey the assumptions of the proposition with label set $[n+1]$.
 We are going to construct a unique leaf-labeled full binary tree $\tilde{\mathbf t}_{n+1}$  and isomorphism $\psi$ from $\mathcal D_{n+1}$ to $\tilde{\mathbf t}_{n+1}$.

To this end we denote the restriction of $\mathcal D_{n+1}$ to $[n]$ by $\mathcal D_n$. Since $\mathcal D_n$ meets the conditions of the induction hypothesis, there exists a unique leaf-labeled full binary tree $\tilde{\mathbf t}_n$ and isomorphism $\phi$ from $\mathcal D_n$ to $\tilde{\mathbf t}_n$. By a slight abuse of notation we will use the  symbol $\langle i,j\rangle$ both for the equivalence classes in $\mathcal D_n$ and for the equivalence classes in $\mathcal D_{n+1}$.

Now choose $a, b \in [n]$ such that $\phi(\langle a,b\rangle) = \emptyset \in  \tilde{\mathbf t}_n$.
Focusing on $a, b, n+1$, the triplet property (C) allows to distinguish between two cases.
The first case is when $\langle n+1, a\rangle = \langle n+1$, $b \rangle  < \langle a, b \rangle$
and the second case is when $\langle a, b \rangle = \langle n+1$, $b \rangle < \langle n+1, a \rangle$ or $\langle a$, $b \rangle = \langle n+1, a \rangle < \langle n+1, b  \rangle$

\bigskip
\noindent
{\bf Case 1:} $\langle n+1, a\rangle = \langle n+1, b \rangle  < \langle a, b \rangle$. \\
 Arguing by the isomorphy of $\mathcal D_n$ and $\tilde{\mathbf t}_n$, and recalling that $\phi(\langle a,b\rangle) = \emptyset$, we see that for all $h\in [n]$ we have that either 
$\langle a, b \rangle = \langle a, h \rangle < \langle a, h\rangle$ 
or 
$\langle a, b \rangle = \langle a, h \rangle < \langle b, h\rangle$. 
In the former case,  we obtain 
from 
$\langle n+1, a\rangle = \langle n+1$, $b \rangle  < \langle a, b \rangle$,
$\langle a, b \rangle = \langle a, h \rangle < \langle a, h\rangle$,
and the transitivity of $<$ that 
$\langle n+1, a\rangle < \langle a, h\rangle$
and thence,
from the triplet property (C) applied to the triplet $a, h, n+1$,
that $\langle n+1, h\rangle = \langle n+1, a\rangle < \langle a, h \rangle$. 
In the latter case, we obtain by a similar argument that $\langle n+1, h\rangle = \langle n+1, b \rangle < \langle b, h \rangle$. 
Thus, in both cases we have for all $h \in [n]$ that $\langle n+1, h\rangle = \langle n+1, a\rangle=\langle n+1, b \rangle $. 
Since for all $i,j \in [n]$ with $\langle i,j\rangle \neq \langle a,b\rangle$ we have $\langle a,b\rangle < \langle i,j\rangle$ by the induction assumption, the transitivity of $<$ implies that 
$\langle n+1,h\rangle = \langle n+1,a\rangle = \langle n+1,b\rangle< \langle i,j\rangle$ for all $i,j \in [n]$. 
Thus, we are led to define 
\[ \psi(\langle n+1, h \rangle) := \emptyset \; \text{for} \; h \in [n].
\] 

To define the other values of $\psi$, we consider two subcases:

\medskip
\noindent
{\bf Case 1a:} $\langle n+1$, $a\rangle = \langle n+1, b \rangle <_L \langle a, b \rangle$. \\
Recall from above that $\langle a,b \rangle \le \langle i,j \rangle$ for all $i,j \in [n]$.
Property (E) then ensures that $\langle n+1, a\rangle = \langle n+1,b \rangle <_L \langle i, j \rangle$ for all $i,j \in [n]$. 
Moreover, the triplet property (C) applied to the triplet $n+1, a,b$ combined with property (D) guarantees that $\langle n+1, a\rangle = \langle n+1, b\rangle <_R \langle n+1, n+1 \rangle$. 
Consequently, we are led to set
\begin{align*}
\psi(\langle n+1, n+1 \rangle) &:= 0,\\
\psi(\langle i, j \rangle) &:= 1\phi(\langle i, j \rangle)\mbox{ for } i,j\in [n].
 \end{align*}

\medskip
\noindent
 {\bf Case 1b:} $\langle n+1, a\rangle = \langle n+1, b \rangle <_R \langle a, b \rangle$. \\ 
By an argument similar to that in Case 1b, we are led to set 
\begin{align*}
\psi(\langle n+1, n+1 \rangle) &:= 1,\\
\psi(\langle i, j \rangle) &:= 0\phi(\langle i, j \rangle)\mbox{ for } i,j\in [n].
 \end{align*}

\bigskip
\noindent
 {\bf Case 2:} $\langle a, b \rangle = \langle n+1, b \rangle < \langle n+1, a \rangle$ or $\langle a, b \rangle = \langle n+1, a \rangle < \langle n+1, b  \rangle$. \\
From property (F) for $a,b, n+1$, we see that either of the two above subcases implies that $\langle a, b\rangle < n+1$. Also, we may assume without loss of generality from property (B) 
(by exchanging the roles of $a$ and $b$ if necessary) that $\langle a, b \rangle <_L a$ and $\langle a, b \rangle <_R b$.

Put $\mathcal L := \{g\in [n+1] : \langle a,b \rangle <_L g\}$, $\mathcal R := \{h\in [n+1] : \langle a,b \rangle <_R h\}$, and note 
from property (D) that $n+1 \in \mathcal L \cup  \mathcal R$. Moreover, $a \in \mathcal L$ and $b \in \mathcal R$, hence
$\# \mathcal L \le n$ and $\# \mathcal R \le n$.  We may thus apply the induction assumption to $\mathcal D_{\mathcal L}$ and $\mathcal D_{\mathcal R}$,   defined to be the restrictions of $\mathcal D_{n+1}$ to~$ \mathcal L$ and $ \mathcal R$, respectively. Let $\tilde{\mathbf t}_{\mathcal L}$ and $\tilde{\mathbf t}_{\mathcal R}$ be the corresponding leaf-labeled full binary trees, and let $\chi_{\mathcal L}$ and $\chi_{\mathcal R}$ be the corresponding isomorphisms.  
The desired leaf-labeled full binary tree 
$ \tilde {\mathbf t}_{n+1}$
 has the vertex set $\{\emptyset\} \cup \{0v: v \in \tilde{\mathbf t}_{\mathcal D}\} \cup \{1v: v\in \tilde {\mathbf t}_{\mathcal R}\}$, and the isomorphism $\psi$  from $\mathcal D_{n+1}$ to $\tilde{\mathbf t}_{n+1}$ is given by
\begin{itemize}
\item
$\psi(\langle g, h\rangle) := \emptyset $\quad \quad  for $g\in \mathcal L$, $h\in \mathcal R$,
\item
$\psi(\langle g, h\rangle) := 0v$
  \quad for $g, h\in \mathcal L$, with $v:=\chi_{\mathcal L}(\langle g, h\rangle)$
\item
 $\psi(\langle g, h\rangle) := 1v$
 \quad  for $g, h\in \mathcal R$, with $v:=\chi_{\mathcal R}(\langle g, h\rangle)$.
\end{itemize}
 \end{proof}
%
%
%
The next result shows that a finite didendritic system can be constructed in two stages. The first stage determines the partial order $<$ while the second stage resolves each instance of $<$ as either $<_L$ or $<_R$ in a consistent manner.  A randomized version of this construction appears in the statement of the main representation result, Theorem 8.2 of \cite{MR3601650}. In the next proposition we elaborate on the ``deterministic heart'' of that construction.

\begin{proposition}
\label{P:left_right}
 Let $\mathcal N$ be a finite set, $\equiv$ be an equivalence relation on $\mathcal N \times \mathcal N$ and $<$ be a partial order on the set of equivalence classes. Suppose also that for distinct $i,j \in\mathcal N$ there are 
elements $w(i,j)$ of the set $\{\curvearrowright, \curvearrowleft\}$. Assume properties  (A) and (C) from Definition~\ref{D:didendritic}, as well as the following properties
\begin{itemize}
\item[(B')] For distinct $i,j \in \mathcal N$, $\langle i,j \rangle < i$ and $\langle i,j \rangle < j$.
\item[(B'')]
For $i \ne j$,
$w(i,j) = \curvearrowright$ if and only if $w(j,i) = \curvearrowleft$.
\item[(E')]
For distinct $i,j,k \in \mathcal N$,
if $\langle i,j\rangle =\langle i,k \rangle < \langle j,k\rangle$,
then $w(i,j) = w(i,k)$.
\end{itemize}
Then there is a unique pair of partial orders $<_L$ and $<_R$ on $\{\langle i,j \rangle : i,j \in\mathcal N\}$
such that  
\[
\langle i,j \rangle <_L i \text{ and } \langle i,j \rangle <_R j
\; \Longleftrightarrow \;
w(i,j) = \curvearrowright
\]
and the ensemble $(\mathcal N, \, \equiv, \, \langle \cdot, \cdot \rangle, \, <_L, \, <_R, \, <)$
is a didendritic system.
\end{proposition}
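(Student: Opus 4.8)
\emph{Approach.} The plan is to argue by induction on $n:=\#\mathcal N$, constructing a leaf-labeled full binary tree $\tilde{\mathbf t}$ that realizes the data and reading $<_L$ and $<_R$ off that tree, in close parallel with the proof of Proposition~\ref{DDS_tree}. The equivalence relation together with (A), (B'), (C) will govern the \emph{shape} of $\tilde{\mathbf t}$ exactly as there, while the family $w$, constrained by (B'') and (E'), will dictate a \emph{consistent} left/right orientation at every internal vertex. Once such a $\tilde{\mathbf t}$ is produced we are done: the triple of partial orders read off any leaf-labeled full binary tree satisfies (A)--(E) (these are precisely the properties listed after Definition~\ref{D:finite_didendritic}), so $(\mathcal N,\equiv,\langle\cdot,\cdot\rangle,<_L,<_R,<)$ is automatically a didendritic system, and the required equivalence $\langle i,j\rangle<_L i \text{ and } \langle i,j\rangle<_R j \iff w(i,j)=\curvearrowright$ holds by construction. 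Uniqueness is handled separately at the end.

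\emph{The inductive step.} The cases $n\le 2$ are immediate (there is at most one nontrivial class, the stated equivalence fixes its relation to the two leaves, and (D), (E) are vacuous). For the step set $\mathcal N=[n+1]$ without loss of generality. The restriction of the data to $[n]$ still satisfies (A), (B'), (C), (B''), (E'), so by the induction hypothesis it yields a didendritic system $\mathcal D_n$, which by Proposition~\ref{DDS_tree} is isomorphic to a unique leaf-labeled full binary tree $\tilde{\mathbf t}_n$ via an isomorphism $\phi$. Pick $a,b\in[n]$ with $\phi(\langle a,b\rangle)=\emptyset$ and apply (C) to $\{a,b,n+1\}$: exactly as in the proof of Proposition~\ref{DDS_tree} this splits into Case~1, $\langle n+1,a\rangle=\langle n+1,b\rangle<\langle a,b\rangle$ (a new root appears), and Case~2, $\langle a,b\rangle=\langle n+1,a\rangle<\langle n+1,b\rangle$ or $\langle a,b\rangle=\langle n+1,b\rangle<\langle n+1,a\rangle$ ($n+1$ attaches strictly below the old root). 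One must check that (A), (B'), (C) \emph{alone} force this dichotomy and, in Case~2, force $\langle a,b\rangle<n+1$ (the latter follows from $\langle a,b\rangle\in\{\langle a,n+1\rangle,\langle b,n+1\rangle\}$ together with (B')), since the appeals to (D), (E) in Proposition~\ref{DDS_tree} concern triplets that here involve the not-yet-ordered label $n+1$; for triplets inside $[n]$ the full strength of $\mathcal D_n$ is available.

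\emph{The orientation — the main obstacle.} In Case~1 the orientation of the new root is combinatorially free; we send leaf $n+1$ to the left child precisely when $w(n+1,a)=\curvearrowright$. This is unambiguous: a short argument from (C), (B') and the structure of $\tilde{\mathbf t}_n$ gives $\langle n+1,h\rangle=\langle n+1,a\rangle$ for every $h\in[n]$, whence $\langle n+1,h\rangle=\langle n+1,h'\rangle<\langle h,h'\rangle$ for distinct $h,h'\in[n]$ (using Case~1 and the $<$-minimality of $\langle a,b\rangle$ in $\mathcal D_n$), so (E') gives $w(n+1,h)=w(n+1,h')$. The $[n]$-leaves keep their $\tilde{\mathbf t}_n$-positions (each vertex word merely acquires the prefix of the side opposite $n+1$), so the required equivalence survives on $[n]$ by the induction hypothesis and holds for pairs $(n+1,h)$ by construction. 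In Case~2 the old root's orientation is already fixed, say $a$ on the left and $b$ on the right; split $[n+1]$ into the left-hand labels $\mathcal L$ (containing $n+1$ iff we are in the sub-case $\langle a,b\rangle=\langle n+1,b\rangle<\langle n+1,a\rangle$) and the right-hand labels $\mathcal R$, note $\#\mathcal L\le n$ and $\#\mathcal R\le n$ since $a\in\mathcal L$, $b\in\mathcal R$, apply the induction hypothesis to the restrictions $\mathcal D_{\mathcal L}$, $\mathcal D_{\mathcal R}$, and glue the resulting trees under a common root as in Proposition~\ref{DDS_tree}. The one step with no analogue there, and where I expect the real work to lie, is checking that the glued tree realizes $w$: for $g\in\mathcal L$, $h\in\mathcal R$ one has $\langle g,h\rangle=\langle a,b\rangle$ and must show $w(g,h)=\curvearrowright$. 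For $g,h\in[n]$ this is the induction hypothesis for $\mathcal D_n$. For $g=n+1$ (so $h\in\mathcal R\subseteq[n]$), applying (E') to $\{b,a,n+1\}$ — valid because $\langle b,a\rangle=\langle b,n+1\rangle<\langle n+1,a\rangle$ in this sub-case — gives $w(b,n+1)=w(b,a)$; the induction hypothesis for $\mathcal D_n$ gives $w(a,b)=\curvearrowright$ (as $\langle a,b\rangle<_L a$, $\langle a,b\rangle<_R b$), hence $w(b,a)=\curvearrowleft$ by (B''), hence $w(n+1,b)=\curvearrowright$; and since $\langle n+1,h\rangle=\langle a,b\rangle<\langle b,h\rangle$, one more application of (E') propagates $w(n+1,h)=\curvearrowright$ to every $h\in\mathcal R$. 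Bijectivity and order-preservation of the gluing map are routine and proceed verbatim as in Proposition~\ref{DDS_tree}.

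\emph{Uniqueness.} Suppose $<_L,<_R$ make $(\mathcal N,\equiv,\langle\cdot,\cdot\rangle,<_L,<_R,<)$ a didendritic system with the stated property. By Proposition~\ref{DDS_tree} it is a leaf-labeled full binary tree; inspecting the tree, whenever $\langle g,h\rangle<\langle i,j\rangle$ there are $p\in\{g,h\}$ and $q\in\{i,j\}$ with $\langle p,q\rangle=\langle g,h\rangle$ (one of $g,h$ lies on the side of the vertex $\langle g,h\rangle$ opposite to $i$ and $j$). Since $\langle i,j\rangle\le q$, properties (D) and (E) give $\langle g,h\rangle<_L\langle i,j\rangle \iff \langle p,q\rangle<_L q$. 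Hence $<_L$ and $<_R$ are completely determined by the relations $\langle p,q\rangle<_? q$ between a class and one of its two defining leaves, and these are determined — via the stated property and (B), which guarantees that exactly one of $\langle p,q\rangle<_L q$, $\langle p,q\rangle<_R q$ holds — by $w(p,q)$. Therefore any two admissible pairs $(<_L,<_R)$ coincide.
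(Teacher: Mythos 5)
Your proof is correct and follows essentially the same route as the paper: induction on $\#\mathcal N$ via Proposition~\ref{DDS_tree}, with the dichotomy (new root above $\langle a,b\rangle$ vs.\ $n+1$ attaching strictly inside a subtree) and the left/right gluing both mirroring the paper's Steps~3--4. The only stylistic differences are that you spell out the consistency of $w(n+1,\cdot)$ in Case~1 (which the paper leaves implicit) and give a standalone uniqueness argument at the end, whereas the paper folds uniqueness into the induction hypothesis; both are sound.
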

\begin{proof} We may assume without loss of generality that $\mathcal N = [n]$ for some $n \in \mathbb N$.

\bigskip
\noindent
Step 1. For $n=1$ there is nothing to prove.

\bigskip
\noindent
Step 2. For $n=2$ we have two cases according to property (B'): \\ If $w(1,2) = \curvearrowright$ then $\langle 1,2\rangle <_L 1$ and  $\langle 1,2\rangle <_R 2$. If $w(1,2) = \curvearrowleft$ then $\langle 1,2\rangle <_L 2$ and  $\langle 1,2\rangle <_R 1$. In both cases, $(\{1,2\}, \equiv, \langle \cdot, \cdot \rangle, <_L, <_R, <)$
is a didendritic system. 

\bigskip
\noindent
Step 3. Now assume  $n\ge3$. 
 
 To show property (B) in the definition of a didendritic system we can argue as in the case $n=2$. 

Next we show for distinct $i,j,k \in [n]$ that the assumptions of the proposition define partial orders $<_L, <_R$ on 
$\{\langle g,h\rangle: g,h \in \{i,j,k\}\}$ which meet condition~(F) formulated in Lemma~\ref{ordered_triplets}.

\bigskip
Indeed, because of condition (C) we have one of the following cases: 

\medskip
\noindent
Case 1: \qquad $\langle i,j\rangle = \langle i,k\rangle < \langle j,k\rangle$,
\\
Case 2: \qquad $\langle j,k\rangle = \langle j,i\rangle < \langle k,i \rangle$,
\\
Case 3: \qquad $\langle k,i\rangle = \langle k,j\rangle < \langle i,j \rangle$.

\bigskip
Because of property (E'), in Case 1 we have $w(i,j) = w(i,k)$, in Case 2 we have $w(j,k) = w(j,i)$, and in Case 3 we have $w(k,i) = w(k,j)$. 
Thus, in the three cases, we have the four different choices  $(\curvearrowright,\curvearrowright), (\curvearrowright,\curvearrowleft), (\curvearrowleft, \curvearrowright), (\curvearrowleft, \curvearrowleft)$ for the pair $(w(i,j), w(j,k))$ in Case~1, 
for the pair $(w(j,i), w(k,i))$ in Case 2, and for the pair $(w(k,i), w(i,j)$ in Case 3. Any of these $3\times4=12$ sub-cases 1a), \ldots, 3d) 
leads to partial orders $<_L, <_R$ on $\{\langle g,h\rangle: g,h \in \{i,j,k\}\}$ and to a (distinct) leaf-labeled full binary tree ${\tilde{\mathbf  t}}$ 
whose five vertices are bijectively labeled by the elements of $\{\langle g,h\rangle: g,h \in \{i,j,k\}\}$ in an order preserving way.
 For example, in  Case 1a) ${\tilde{\mathbf  t}}$ consists of the vertex set $\{\emptyset, 0, 1, 10, 11\}$, with $0$ labeled $i$, $10$ labeled $j$ and $11$ labeled $k$.
In Case 3c), ${\tilde{\mathbf  t}}$ consists of the vertex set $\{\emptyset, 0, 00, 01, 1\}$, with $00$ labeled $i$, $01$ labeled $j$ and $1$ labeled $k$.

\bigskip
\noindent
Step 4. We now proceed inductively to show also the remaining properties (D) and (E) in the definition of a didendritic system. 

Let $\mathcal N_{n+1}=[n+1]$ and consider the restrictions of $\equiv$ and $<$ to $[n]\times [n]$. By the induction hypothesis there is a unique didendritic system $\mathcal D_n$ with label set $[n]$ that has the properties stated in the proposition. By Proposition~\ref{DDS_tree} there is a unique leaf-labeled full binary tree  $\tilde{\mathbf t}_n$ whose $n$ vertices are bijectively labeled by the elements of $[n]$ such that the partial orders $<_L$, $<_R$ and $<$ are preserved. Again by Proposition~\ref{DDS_tree}, for the induction step it suffices to construct out of $\tilde{\mathbf t}_n$ a leaf-labeled full binary tree  $\tilde{\mathbf t}_{n+1}$ whose $2n+1$ vertices are bijectively labeled by the equivalence classes $\langle i, j\rangle$, $i,j\in [n+1]$, such that the partial orders $<_L$, $<_R$ and $<$ are preserved, and to show that the construction of  $\tilde{\mathbf t}_{n+1}$  is unique.

Let $a,b \in [n]$ be such that the root $\emptyset$ of $\tilde{\mathbf t}_n$ is labeled by $\langle a, b \rangle$. 
Because of condition~(C) applied to the triplet $n+1, a,b$, one of the following three cases applies:

\medskip
\noindent
Case 1: \qquad \, $\langle n+1,a\rangle = \langle n+1, b\rangle < \langle a,b\rangle$
\\
Case 2a: \qquad $\langle a,b\rangle = \langle a,n+1\rangle < \langle b,n+1 \rangle$
\\
Case 2b: \qquad $\langle b,n+1\rangle = \langle b,a\rangle < \langle n+1,a \rangle$

\medskip

In Case 1, because of the transitivity of $<$ and the isomorphy between $\mathcal D_n$ and $\tilde{\mathbf t}_n$, we have $\langle n+1, a\rangle  = \langle n+1, b\rangle < \langle i,j\rangle$ for all $i,j \in [n]$. Therefore, within $\tilde{\mathbf t}_{n+1}$, the root $\emptyset$ must be labeled by $\langle n+1, a\rangle = \langle n+1, b\rangle$, and either the vertex $1$ or the vertex $0$ must be a leaf labeled by $n+1$. The former is the case if $w(n+1,a) = \curvearrowleft$, the latter if $w(n+1,a) = \curvearrowright$. In the former case we have   $\langle n+1, a\rangle = \langle n+1, b\rangle <_L \langle a, b \rangle$, and the addresses in $\tilde{\mathbf t}_{n+1}$ of the equivalence classes $\langle i, j\rangle$, $i,j \in [n+1]$,  are then given just as in Case 1a in the proof of Proposition~\ref{DDS_tree}. In the latter case we have $\langle n+1, a\rangle = \langle n+1, b\rangle <_R \langle a, b \rangle$, and the addresses in $\tilde{\mathbf t}_{n+1}$ of the equivalence classes $\langle i, j\rangle$, $i,j \in [n+1]$,  are then given just as in Case 1b in the proof of Proposition~\ref{DDS_tree}.

\medskip

Now we assume Case 2a and we will parallel the reasoning in Case 2 in the proof of Proposition~\ref{DDS_tree}. From Step 3 we have the property (F) in hand; this applied to the triplet $\{i,j,k\} = \{a,b,n+1\}$ ensures that the set of equivalence classes $\{\langle g,h\rangle: g,h \in \{a,b,n+1\}$ is isomorphic to a labeled binary tree $\tilde{\mathbf t}$ with 3 leaves, for which either $0$ or $1$ is a leaf. Assuming without loss of generality that $w(a,b) = \curvearrowright$ we see that the vertex $0$ in $\tilde{\mathbf t}$ must be labeled with $a$ and the  vertex $1$ in $\tilde{\mathbf t}$ must be labeled with $\langle b,n+1\rangle$. Thus, we arrive that  $\langle a,b\rangle = \langle a,n+1\rangle <_R \langle b,n+1\rangle$ and  $\langle a,b\rangle <_L a$.

Put $\mathcal L := \{h\in [n] : \langle a,b \rangle <_L h\}$ and $\mathcal R := \{h\in [n] : \langle a,b \rangle <_R h\} \cup\{n+1\}$.  Note that $\# \mathcal L \le n$ and $\# \mathcal R \le n$ because $a \in \mathcal L$ and $b \in \mathcal R$.  We may thus apply the induction assumption to the restrictions of $\equiv$, $<$ and $w$ to the two set $ \mathcal L$ and $ \mathcal R$, respectively. 
Let $\tilde{\mathbf t}_{\mathcal L}$ and $\tilde{\mathbf t}_{\mathcal R}$ be the corresponding leaf-labeled full binary trees, and let $\chi_{\mathcal L}$ and $\chi_{\mathcal R}$ be the corresponding isomorphisms.  
The desired leaf-labeled full binary tree 
$\tilde{\mathbf t}_{n+1}$
 has the vertex set $\{\emptyset\} \cup \{0v: v \in \tilde{\mathbf t}_{\mathcal L}\} \cup \{1v: v\in \tilde{\mathbf t}_{\mathcal R}\}$, 
and the addresses of the equivalence classes $\langle g,h\rangle$, $g,h\in [n+1]$, in the tree $\tilde{\mathbf t}_{n+1}$ are given by
\begin{itemize}
\item
$\psi(\langle g, h\rangle) := \emptyset $\quad  for $g\in \mathcal L$, $h\in \mathcal R$,
\item
$\psi(\langle g, h\rangle) := 0v$
  \quad for $g, h\in \mathcal L$, with $v:=\chi_{\mathcal L}(\langle g, h\rangle)$,
\item
 $\psi(\langle g, h\rangle) := 1v$
 \quad  for $g, h\in \mathcal R$, with $v:=\chi_{\mathcal R}(\langle g, h\rangle)$.
 \end{itemize}

 The partial orders $<_L$ and $<_R$ are then simply read off from the corresponding orders in the leaf-labeled full binary tree $\tilde{\mathbf t}_{n+1}$, and clearly also the properties (D) and (E) are inherited.

\medskip

 It remains to deal with the case 2b. This is, however, completely analogous to case 2a, and we refrain from giving the parallel arguments here. 
\end{proof}

We have seen that finite didendritic systems are essentially finite leaf-labeled full binary trees. One way to produce didendritic systems with infinite label sets is via a projective limit construction as detailed in the following two lemmas. 
We omit the (simple) proofs. The first lemma says that the natural ``projection'' of a didendritic system to a subset of its label set is again a didendritic system.

\begin{lemma}
\label{L:restriction}
Consider a didendritic system $(\mathcal{N}, \, \equiv, \, \langle \cdot, \cdot \rangle, \, <_L, \, <_R, \, <)$.
Let $\mathcal{N}'$ be a nonempty subset of $\mathcal{N}$.  Define an equivalence relation
$\equiv'$ on $\mathcal{N}' \times \mathcal{N}'$ by declaring that $(i,j) \equiv' (k,\ell)$
if and only if $(i,j) \equiv (k,\ell)$.  Write $\langle i, j \rangle'$ for the equivalence
class of $(i,j) \in \mathcal{N}' \times \mathcal{N}'$.  Define a partial order $<_L'$ on the
equivalence classes of $\equiv'$ by declaring that
$\langle i,j \rangle' <_L' \langle k,\ell \rangle'$ if and only if
$\langle i,j \rangle <_L \langle k,\ell \rangle$.  Define partial orders
$<_R'$ and $<'$ analogously.  Then $(\mathcal{N}', \, \equiv', \, \langle \cdot, \cdot \rangle', \, <_L', \, <_R', \, <')$
is a didendritic system.
\end{lemma}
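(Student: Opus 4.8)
## Proof Proposal for Lemma~\ref{L:restriction}

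The plan is to verify directly that the restricted structure $(\mathcal{N}', \equiv', \langle\cdot,\cdot\rangle', <_L', <_R', <')$ satisfies each of the axioms (A)--(E) of Definition~\ref{D:didendritic}. The key observation that makes everything routine is that the restricted relations are defined to be the original relations, merely with their domains cut down to pairs from $\mathcal{N}' \times \mathcal{N}'$; so every instance of an axiom for the restricted system involves only elements of $\mathcal{N}'$, and one can invoke the corresponding instance of the axiom in the original system. First I would note the elementary but essential point that since each axiom (A)--(E) is a universally quantified statement over finitely many elements of the label set (two for (A) and (B), three for (C) and (F)-type statements, four for (D), six for (E)), restricting all quantifiers to range over $\mathcal{N}'$ produces a statement that is simply a special case of the ambient axiom. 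Thus each axiom is inherited.

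The only genuine wrinkle is to check that $\equiv'$, $<_L'$, $<_R'$, $<'$ are well defined on the equivalence classes of $\equiv'$ rather than merely on pairs. For $\equiv'$ this is immediate, since it is the restriction of the equivalence relation $\equiv$. For the partial orders, one must confirm that if $\langle i,j\rangle' = \langle k,\ell\rangle'$ as subsets of $\mathcal{N}'\times\mathcal{N}'$, then $\langle i,j\rangle = \langle k,\ell\rangle$ as subsets of $\mathcal{N}\times\mathcal{N}$ — i.e. that two pairs from $\mathcal{N}'$ that are $\equiv'$-equivalent are also $\equiv$-equivalent. But that is exactly the defining clause ``$(i,j)\equiv'(k,\ell)$ if and only if $(i,j)\equiv(k,\ell)$,'' so there is no obstruction: the value of $<_L'$ (etc.) on a pair of classes depends only on the $\equiv$-classes of representatives, and those are well defined because $<_L$ was well defined on the $\equiv$-classes of the ambient system. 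Similarly one checks reflexivity is excluded / irreflexivity and transitivity of each of $<_L'$, $<_R'$, $<'$ by restricting the corresponding property of $<_L$, $<_R$, $<$.

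Concretely, I would write: (A) $(i,j)\equiv'(j,i)$ because $(i,j)\equiv(j,i)$ by ambient (A); (B) for distinct $i,j\in\mathcal{N}'$, one of the two disjunctive cases holds for $\langle i,j\rangle, \langle i,i\rangle, \langle j,j\rangle$ in the ambient system by ambient (B), and since all three classes are represented by pairs from $\mathcal{N}'$ the same disjunction transfers verbatim to the primed relations; (C) for distinct $i,j,k\in\mathcal{N}'$, the triplet alternative holds among $\langle i,j\rangle,\langle i,k\rangle,\langle j,k\rangle$ by ambient (C), and all equivalence classes and order relations involved are between classes with representatives in $\mathcal{N}'$, so the same alternative holds for the primed structure; (D) for $i,j,k,\ell\in\mathcal{N}'$, apply ambient (D) to $\langle i,j\rangle$ and $\langle k,\ell\rangle$ — the ``at most one of $<_L,<_R$'' clause and the ``$<$ iff $<_L$ or $<_R$'' equivalence descend immediately; (E) for $f,g,h,i,j,k\in\mathcal{N}'$, the implication $\langle f,g\rangle<_L\langle h,i\rangle<\langle j,k\rangle \Rightarrow \langle f,g\rangle<_L\langle j,k\rangle$ follows from ambient (E) since each hypothesis and the conclusion is a relation between classes with $\mathcal{N}'$-representatives, and the same for the $<_R$ version. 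There is no serious obstacle here; the ``main'' step is the well-definedness remark of the previous paragraph, which is a one-line consequence of the definition of $\equiv'$.
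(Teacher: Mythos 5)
Your proof is correct and is precisely the routine verification the authors have in mind: the paper explicitly omits the proofs of Lemmas~\ref{L:restriction} and \ref{L:projective} as ``simple,'' and your observation that each of (A)--(E) is a universally quantified statement whose restriction to $\mathcal{N}'$ is a special case of the ambient axiom, together with the well-definedness remark that $\equiv'$ is exactly the restriction of $\equiv$, is the whole argument.
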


\begin{definition}
\label{D:restriction}
Suppose that the didendritic systems $(\mathcal{N}, \, \equiv, \, \langle \cdot, \cdot \rangle, \,  <_L, \,  <_R, \, <)$ and
$(\mathcal{N}', \, \equiv', \, \langle \cdot, \cdot \rangle', \, <_L', \,  <_R', \, <')$ are as in Lemma~\ref{L:restriction}.
We then say that the latter didendritic system is the {\em restriction} of the former to $\mathcal{N}'$.
\end{definition}

The following lemma asserts the existence of a projective limit for a projective family of didendritic systems.

\begin{lemma}
\label{L:projective}
Suppose that $(\mathcal{N}^n, \equiv^n, \langle \cdot, \cdot \rangle^n, <_L^n, <_R^n, <^n)$, $n \in \mathbb{N}$,
are didendritic systems such that for $m < n$ we have $\mathcal{N}^m \subseteq \mathcal{N}^n$ and
$(\mathcal{N}^m, \equiv^m, \langle \cdot, \cdot \rangle^m, <_L^m, <_R^m, <^m)$
is the restriction of $(\mathcal{N}^n, \equiv^n, \langle \cdot, \cdot \rangle^n, <_L^n, <_R^n, <^n)$
to $\mathcal{N}^m$.  Put $\mathcal{N}^\infty := \bigcup_{n \in \mathbb{N}} \mathcal{N}^n$.  Then there is a unique
didendritic system 
$(\mathcal{N}^\infty, \equiv^\infty, \langle \cdot, \cdot \rangle^\infty, <_L^\infty, <_R^\infty, <^\infty)$
such that for each $n \in \mathbb{N}$ the didendritic system 
$(\mathcal{N}^n, \equiv^n, \langle \cdot, \cdot \rangle^n, <_L^n, <_R^n, <^n)$
is the restriction of 
$(\mathcal{N}^\infty, \equiv^\infty, \langle \cdot, \cdot \rangle^\infty, <_L^\infty, <_R^\infty, <^\infty)$ to
$\mathcal{N}^n$.
\end{lemma}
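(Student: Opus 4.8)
The plan is a routine projective-limit argument that exploits the fact that every axiom in Definition~\ref{D:didendritic} refers to only finitely many elements of the label set.

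First I would record the elementary observation that, since $\mathcal{N}^1 \subseteq \mathcal{N}^2 \subseteq \cdots$ and $\mathcal{N}^\infty = \bigcup_n \mathcal{N}^n$, every finite subset of $\mathcal{N}^\infty$ is contained in $\mathcal{N}^n$ for all sufficiently large $n$; in particular any finite collection of labels (we will need up to six at a time) lies in a common $\mathcal{N}^n$. I would then define the structure on $\mathcal{N}^\infty$ by ``reading it off'' at a sufficiently high finite level: declare $(i,j) \equiv^\infty (k,\ell)$ to hold precisely when $(i,j) \equiv^n (k,\ell)$ for some --- equivalently, by the assumed restriction compatibility, for every --- index $n$ with $i,j,k,\ell \in \mathcal{N}^n$; write $\langle i,j \rangle^\infty$ for the resulting class; and declare $\langle i,j \rangle^\infty <_L^\infty \langle k,\ell \rangle^\infty$ (respectively $<_R^\infty$, $<^\infty$) to hold when $\langle i,j \rangle^n <_L^n \langle k,\ell \rangle^n$ (respectively $<_R^n$, $<^n$) for some such $n$. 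The restriction hypothesis makes all of this independent of the chosen $n$, and it also guarantees that $<_L^\infty$, $<_R^\infty$, $<^\infty$ are well defined on equivalence classes rather than on representatives: if $(i,j) \equiv^\infty (i',j')$ and $(k,\ell) \equiv^\infty (k',\ell')$, pick an $n$ containing all eight labels and use that $<_L^n$ is well defined on $\equiv^n$-classes.

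Next I would verify that $(\mathcal{N}^\infty, \equiv^\infty, \langle \cdot, \cdot \rangle^\infty, <_L^\infty, <_R^\infty, <^\infty)$ really is a didendritic system. Reflexivity and symmetry of $\equiv^\infty$ are immediate; transitivity involves three pairs, hence at most six labels, so one chooses an $n$ containing all of them and invokes transitivity of $\equiv^n$. Irreflexivity and transitivity of $<_L^\infty$, $<_R^\infty$, $<^\infty$ are handled the same way (transitivity again touching at most six labels through three equivalence classes), and antisymmetry is automatic for strict orders. Properties (A)--(E) each mention only finitely many labels (between two and six), so every instance follows by choosing $n$ large enough that all the labels involved lie in $\mathcal{N}^n$ and quoting the corresponding property of the didendritic system $(\mathcal{N}^n, \equiv^n, \langle \cdot, \cdot \rangle^n, <_L^n, <_R^n, <^n)$.

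Finally I would establish the two assertions in the statement. For the restriction property: for fixed $n$ and $i,j,k,\ell \in \mathcal{N}^n$ one has $(i,j) \equiv^\infty (k,\ell) \Longleftrightarrow (i,j) \equiv^n (k,\ell)$ (taking the witnessing index to be $n$ itself) and the analogous equivalences for the three orders straight from the definitions, so the restriction of the infinite system to $\mathcal{N}^n$ in the sense of Definition~\ref{D:restriction} is exactly $(\mathcal{N}^n, \equiv^n, \langle \cdot, \cdot \rangle^n, <_L^n, <_R^n, <^n)$. For uniqueness: if $(\mathcal{N}^\infty, \equiv', \langle \cdot, \cdot \rangle', <_L', <_R', <')$ is any didendritic system restricting to the given one at every level, then for arbitrary $i,j,k,\ell \in \mathcal{N}^\infty$ one picks $n$ with all four labels in $\mathcal{N}^n$ and gets $(i,j) \equiv' (k,\ell) \Longleftrightarrow (i,j) \equiv^n (k,\ell) \Longleftrightarrow (i,j) \equiv^\infty (k,\ell)$, and likewise for $<_L'$, $<_R'$, $<'$, so the primed structure coincides with the one constructed above. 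I do not expect any genuine obstacle here; the only points needing a little care are the well-definedness of the structure on $\equiv^\infty$-classes and the bookkeeping remark that each axiom is ``local'' in the label set, which together explain why the authors are content to omit the proof.
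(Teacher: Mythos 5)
Your argument is correct and is exactly the routine direct-limit construction one would expect; the paper explicitly omits the proof of this lemma (and of Lemma~\ref{L:restriction}), describing it as simple. The key observations you rely on --- that every finite subset of $\mathcal{N}^\infty$ sits inside some $\mathcal{N}^n$ by nestedness, that the restriction hypothesis makes the definitions independent of the chosen level, and that each of the axioms (A)--(E), as well as the partial-order and equivalence-relation requirements, involves only finitely many labels at a time --- are precisely the points that make the lemma go through, and you have stated and used them accurately, including the slightly delicate well-definedness of $<_L^\infty$, $<_R^\infty$, $<^\infty$ on $\equiv^\infty$-classes.
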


\section{Infinite PATRICIA bridges and \\ exchangeable random didendritic systems}
\label{S:bridges_DDS}

It is shown in Section 5 of \cite{MR3601650} that if $(\bar R_n^\infty)_{n \in \mathbb{N}}$
is an infinite PATRICIA bridge, then there is a Markov chain
$(\tilde R_n^\infty)_{n \in \mathbb{N}}$
such that for each $n \in \mathbb{N}$ 
the random element $\tilde R_n^\infty$ is a leaf-labeled full binary
tree with $n$ leaves labeled by $[n]$
and the following hold.
\begin{itemize}
\item
The full binary tree obtained by removing the labels of $\tilde R_n^\infty$
is $\bar R_n^\infty$.
\item
For every $n \in \mathbb{N}$, the conditional distribution
of $\tilde R_n^\infty$ given $\bar R_n^\infty$ is uniform over the
$n!$ possible leaf-labelings of $\bar R_n^\infty$ by [n].
\item
In going backward from time $n+1$ to time $n$,
$\tilde R_{n+1}^\infty$ is transformed into $\tilde R_n^\infty$
according to the following deterministic procedure:
\begin{itemize}
\item
Delete the leaf labeled $n+1$, along with its sibling
(which may or may not be a leaf).
\item 
If the sibling of the leaf labeled $n+1$ is also a leaf, then 
assign the sibling's label to the common parent (which is now a leaf).
\item
If the sibling of the leaf labeled $n+1$ is not a leaf, then attach
the subtree below the sibling to the common parent with its 
leaf labels unchanged and leave all other leaf labels unchanged.
\end{itemize}
\end{itemize}

As we saw in Definition~\ref{D:finite_didendritic}, the leaf-labeled full binary tree $\tilde R_n^\infty$ defines
a random didendritic system with label set $[n]$ for any $n \in \mathbb{N}$.  Moreover, for any $n \in \mathbb{N}$ the didendritic system
defined by $\tilde R_n^\infty$ is the restriction to $[n]$ of the didendritic system defined by $\tilde R_{n+1}^\infty$.
It follows from Lemma~\ref{L:projective} that there is a random didendritic system $(\mathbb{N}, \, \equiv, \, \langle \cdot, \cdot \rangle, \, <_L, \, <_R, \, <)$
such that the restriction of this random didendritic system to $[n]$ is the random didendritic system defined by $\tilde R_n^\infty$ for all $n \in \mathbb{N}$.
Because of Proposition~\ref{DDS_tree} we can recover $\tilde R_n^\infty$ and hence $\bar R_n^\infty$ from this restriction and therefore we can recover
$(\tilde R_n^\infty)_{n \in \mathbb{N}}$ and $(\bar R_n^\infty)_{n \in \mathbb{N}}$ from
$(\mathbb{N}, \, \equiv, \, \langle \cdot, \cdot \rangle, \, <_L, \, <_R, \, <)$.  

The random didendritic system defined by $(\tilde R_n^\infty)_{n \in \mathbb{N}}$ is not arbitrary: it inherits distributional symmetries from
the uniform labeling in the construction of $(\tilde R_n^\infty)_{n \in \mathbb{N}}$ from $(\bar R_n^\infty)_{n \in \mathbb{N}}$.  We now develop
some terminology to describe these symmetries.

\begin{definition}\label{def5_8}
Given a didendritic system 
$\mathcal{D} = (\mathbb{N}, \, \equiv, \, \langle \cdot, \cdot \rangle, \, <_L, \, <_R, \, <)$ with label set $\mathbb{N}$
and a permutation $\sigma$ of $\mathbb{N}$ such
that $\sigma(i) = i$ for all but finitely many $i \in \mathbb{N}$,
the didendritic system 
$\mathcal{D}^\sigma = (\mathbb{N}, \equiv^\sigma, \, \langle \cdot, \cdot \rangle^\sigma, \, <_L^\sigma, \, <_R^\sigma, <^\sigma)$ is defined by
\begin{itemize}
\item
$(i',j') \equiv^\sigma (i'',j'')$ 
if and only if 
$(\sigma(i'), \sigma(j')) \equiv (\sigma(i''), \sigma(j''))$,
\item
$\langle i, j \rangle^\sigma$ is the equivalence class of
the pair $(i,j)$ for the equivalence relation $\equiv^\sigma$,
\item
$\langle h,i \rangle^\sigma <_L^\sigma \langle j,k \rangle^\sigma$
if and only if
$\langle \sigma(h), \sigma(i) \rangle <_L \langle \sigma(j), \sigma(k) \rangle$,
\item
$\langle h,i \rangle^\sigma <_R^\sigma \langle j,k \rangle^\sigma$
if and only if
$\langle \sigma(h), \sigma(i) \rangle <_R \langle \sigma(j), \sigma(k) \rangle$,
\item
\item
$\langle h,i \rangle^\sigma <^\sigma \langle j,k \rangle^\sigma$
if and only if
$\langle \sigma(h), \sigma(i) \rangle <R \langle \sigma(j), \sigma(k) \rangle$.
\end{itemize}
A random didendritic system 
$\mathcal{D} = (\mathbb{N}, \, \equiv, \, \langle \cdot, \cdot \rangle, \, <_L, \, <_R, \, <)$
with label set $\mathbb{N}$  
is {\em exchangeable} if for each permutation $\sigma$ of $\mathbb{N}$ such
that $\sigma(i) = i$ for all but finitely many $i \in \mathbb{N}$ 
the random didendritic system 
$\mathcal{D}^\sigma$
has the same distribution as
$\mathcal{D}$.
\end{definition}

The following result is Lemma~5.12 of \cite{MR3601650}.

\begin{lemma}
The random didendritic system on $\mathbb{N}$ corresponding to the labeled version
of an infinite PATRICIA bridge is exchangeable.  Conversely, if we have an exchangeable
random didendritic system (on $\mathbb{N}$), apply Lemma~\ref{L:restriction} to restrict it to
a sequence of random didendritic systems on $[n]$ for $n \in \mathbb{N}$, and apply Proposition~\ref{DDS_tree}
to construct a full binary tree that is leaf-labeled by $[n]$, then the resulting sequence of leaf-labeled
full binary trees is a PATRICIA bridge.
\end{lemma}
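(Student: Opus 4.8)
The plan is to establish the two directions separately, both by tracking how the backward transition mechanism of the PATRICIA chain interacts with the uniform leaf-labeling.

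\textbf{Forward direction: the random didendritic system is exchangeable.}
First I would fix a permutation $\sigma$ of $\mathbb{N}$ with $\sigma(i) = i$ for all $i > N$, and it suffices to show that the restriction to $[m]$ of $\mathcal{D}^\sigma$ has the same distribution as the restriction of $\mathcal{D}$ to $[m]$ for every $m \ge N$, since these restrictions determine the laws of both systems (by Lemma~\ref{L:projective} and Proposition~\ref{DDS_tree}). By the bijective correspondence of Proposition~\ref{DDS_tree}, the restriction of $\mathcal{D}$ to $[m]$ is just the random didendritic system coming from the leaf-labeled tree $\tilde R_m^\infty$, and applying $\sigma$ (viewed as a permutation of $[m]$ fixing $N+1, \ldots, m$) to the equivalence classes of $\mathcal{D}$ corresponds exactly to relabeling the leaves of $\tilde R_m^\infty$ by $\sigma^{-1}$. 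Thus what must be shown is that $\tilde R_m^\infty$ and its $\sigma$-relabeling have the same law. But by construction the conditional law of $\tilde R_m^\infty$ given $\bar R_m^\infty$ is uniform over the $m!$ leaf-labelings of $\bar R_m^\infty$ by $[m]$; relabeling by a fixed permutation leaves the uniform distribution invariant and does not touch $\bar R_m^\infty$, so the unconditional law is unchanged. This gives exchangeability.

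\textbf{Converse direction: an exchangeable random didendritic system yields a PATRICIA bridge.}
Given an exchangeable random didendritic system $\mathcal{D}$ on $\mathbb{N}$, restrict it (Lemma~\ref{L:restriction}) to $[n]$ for each $n$ and apply Proposition~\ref{DDS_tree} to obtain a leaf-labeled full binary tree $\tilde S_n$ on $[n]$; let $\bar S_n$ be $\tilde S_n$ with labels erased. I would verify two things about $(\bar S_n)_{n \in \mathbb{N}}$: that it is a Markov chain, and that its backward transition probabilities agree with those of $(\bar R_n)_{n \in \mathbb{N}}$ as given in Proposition~\ref{P:Markov_property}(ii), namely that conditionally on $\bar S_{n+1} = \bar{\mathbf{t}}$ the tree $\bar S_n$ is distributed as $\bar\kappa(\bar{\mathbf{t}}, V)$ for $V$ uniform on $\mathbf{L}(\bar{\mathbf{t}})$ and independent of the past. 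The key observation is that because the restriction maps in Lemma~\ref{L:restriction} are compatible, the leaf-labeled tree $\tilde S_n$ is obtained from $\tilde S_{n+1}$ precisely by the deterministic ``delete leaf $n+1$ and its sibling, then relabel/attach'' procedure recalled before Definition~\ref{def5_8} (this is because deleting label $n+1$ from $\tilde S_{n+1}$ corresponds, under Proposition~\ref{DDS_tree}, to restricting $\mathcal{D}$ from $[n+1]$ to $[n]$). Exchangeability under permutations fixing $\{n+2, n+3, \ldots\}$ then forces, for each fixed $n$, the conditional law of $\tilde S_{n+1}$ given $\bar S_{n+1}$ to be uniform over the $(n+1)!$ leaf-labelings; equivalently, the position of the leaf labeled $n+1$ in $\bar S_{n+1}$ is uniform over $\mathbf{L}(\bar S_{n+1})$ and conditionally independent of $\bar S_{n+2}, \bar S_{n+3}, \ldots$ (the latter because those trees depend on $\mathcal{D}$ only through its restriction to $[n+1]$ up to the labels we are averaging over, together with data on which $n+1$ does not act). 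Combining this with the identification of the deterministic backward map with $\bar\kappa$ from Remark~\ref{R:Remy} yields exactly the backward transition probabilities of Proposition~\ref{P:Markov_property}(ii), and Markovianity of the time-reversal (hence of the forward chain) follows as in the proof of that proposition; since $\bar S_1 = \emptyset$, $(\bar S_n)_{n \in \mathbb{N}}$ is an infinite PATRICIA bridge.

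\textbf{Main obstacle.}
The routine parts are the invariance of the uniform labeling and the arithmetic of the backward probabilities; the delicate point is the conditional-independence claim in the converse, i.e.\ that the position of leaf $n+1$ in $\bar S_{n+1}$ is independent not just of $\bar S_{n+1}$ but of the entire remote future $(\bar S_m)_{m > n+1}$. The clean way to handle this is to argue at the level of the whole random didendritic system $\mathcal{D}$: exchangeability under the transposition-type permutations moving only labels in $\{n+1, n+2\}$, $\{n+1, n+3\}$, etc., together with de Finetti-type symmetry, shows that the information distinguishing label $n+1$ from labels $n+2, n+3, \ldots$ within $\mathcal{D}\!\restriction_{[n+2]}, \mathcal{D}\!\restriction_{[n+3]}, \ldots$ is, conditionally on $\bar S_{n+1}$, a uniformly random insertion that is exchangeable with the later insertions and hence independent of the family of erased-label trees. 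I would spell this out by noting that the erased-label trees $\bar S_m$ for $m > n$ are measurable functions of $\mathcal{D}$ that are invariant under every permutation fixing $\{1, \ldots, n\}$ pointwise and permuting $\{n+1, \ldots, m\}$ arbitrarily, so conditioning on them does not bias the uniform choice of where leaf $n+1$ sits.
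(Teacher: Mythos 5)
The paper cites this as Lemma~5.12 of \cite{MR3601650} without reproducing a proof, so I am assessing your argument on its own terms. Your forward direction is correct, and the general strategy for the converse (compatible restrictions give the deterministic delete-leaf-$(n+1)$ backward map; exchangeability gives the conditional uniformity; identify with $\bar\kappa$ via Remark~\ref{R:Remy}) is sound.

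There is, however, a gap in the ``spell out'' of the conditional-independence claim. You assert that the erased-label trees $\bar S_m$, $m > n$, are ``invariant under every permutation fixing $\{1,\ldots,n\}$ pointwise and permuting $\{n+1,\ldots,m\}$ arbitrarily, so conditioning on them does not bias the uniform choice of where leaf $n+1$ sits.'' This is true of each individual $\bar S_m$, but \emph{false} for the joint sequence $(\bar S_m)_{m>n}$, which is what you are conditioning on: the transposition $(n+1\; n+2)$ preserves $\bar S_{n+2},\bar S_{n+3},\ldots$ but typically changes $\bar S_{n+1}$, since the latter is the unlabeled tree spanned by the leaves with labels $1,\ldots,n,n+1$, and after the swap it becomes the one spanned by $1,\ldots,n,n+2$, generically a different shape. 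So invariance under permutations of $\{n+1,\ldots,m\}$ does not give what you need.

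The fix is to use the \emph{opposite} family, which in fact you already name earlier in the paragraph and then abandon: finite permutations of $[n+1]$ (fixing $n+2,n+3,\ldots$). Such a $\sigma$ preserves $[m]$ for every $m \ge n+1$, hence preserves the entire joint sequence $(\bar S_m)_{m\ge n+1}$, while relabeling $\tilde S_{n+1}$ by $\sigma^{-1}$. Taking $\pi$ uniform on permutations of $[n+1]$ and independent of $\mathcal D$, exchangeability gives $(\pi^{-1}\cdot\tilde S_{n+1}, (\bar S_m)_{m\ge n+1})$ equal in distribution to $(\tilde S_{n+1}, (\bar S_m)_{m\ge n+1})$; averaging over $\pi$ shows that the conditional law of $\tilde S_{n+1}$ given the whole sequence $(\bar S_m)_{m\ge n+1}$ is uniform over the $(n+1)!$ labelings of $\bar S_{n+1}$. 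Since $\bar S_n = \bar\kappa(\bar S_{n+1}, V)$ with $V$ the position of the leaf labeled $n+1$, this single observation gives both that the conditional law of $\bar S_n$ given the future depends only on $\bar S_{n+1}$ (hence the backward Markov property) and that it agrees with the backward kernel of Proposition~\ref{P:Markov_property}(ii).
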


Our aim is to find concrete representations of the extremal infinite R\'emy bridges 
(recall that an infinite R\'emy bridge is extremal if it has a trivial tail $\sigma$-field). 
To this end, it will be useful to relate the extremality of an infinite R\'emy bridge to 
properties of the associated exchangeable random didendritic system.
We say that an exchangeable random didendritic system $\mathcal{D}$ is {\em ergodic} if 
\[
\mathbb{P}(\{\mathcal{D} \in A\} \triangle \{\mathcal{D}^\sigma \in A\}) = 0
\] 
for all permutations $\sigma$ of $\mathbb{N}$ such
that $\sigma(i) = i$ for all but finitely many $i \in \mathbb{N}$ implies that 
\[
\mathbb{P}\{\mathcal{D} \in A\} \in \{0,1\}.
\] 
The following result is Proposition~5.19 of \cite{MR3601650}.

\begin{proposition}
\label{P:extremal_vs_ergodic}
An infinite R\'emy bridge is extremal if and only if
the associated exchangeable random didendritic system is ergodic.
\end{proposition}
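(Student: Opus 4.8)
The plan is to establish the equivalence by translating the statement into the language of exchangeable sequences and then invoking the general theory that links extremality in a Doob--Martin setting to triviality of the tail $\sigma$-field, together with the dictionary between an infinite R\'emy bridge $(\bar R_n^\infty)_{n \in \mathbb{N}}$, its leaf-labeled refinement $(\tilde R_n^\infty)_{n \in \mathbb{N}}$, and the associated random didendritic system $\mathcal{D}$ on $\mathbb{N}$. First I would record the correspondence already assembled in this section: from $(\bar R_n^\infty)_{n \in \mathbb{N}}$ one builds $(\tilde R_n^\infty)_{n \in \mathbb{N}}$ by uniform leaf-labeling, from $(\tilde R_n^\infty)_{n \in \mathbb{N}}$ one obtains the projective family of finite didendritic systems and hence $\mathcal{D}$ via Lemma~\ref{L:projective}, and conversely by Proposition~\ref{DDS_tree} one recovers $\tilde R_n^\infty$ (and so $\bar R_n^\infty$) from the restriction of $\mathcal{D}$ to $[n]$. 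Thus $(\bar R_n^\infty)_{n \in \mathbb{N}}$ and $\mathcal{D}$ are measurable functions of each other, and exchangeability of $\mathcal{D}$ (the previous lemma) is exactly the distributional symmetry that makes this a genuine two-way correspondence.

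The key reduction is that by part~(i) of Theorem~\ref{T:PATRICIA_i_ii} an infinite PATRICIA (equivalently, R\'emy) bridge is extremal if and only if its tail $\sigma$-field is almost surely trivial. So it suffices to show that the tail $\sigma$-field of $(\bar R_n^\infty)_{n \in \mathbb{N}}$ is trivial precisely when $\mathcal{D}$ is ergodic in the sense defined just above. For the direction ``ergodic $\Rightarrow$ trivial tail'', I would argue that any tail event $A$ of $(\bar R_n^\infty)_{n \in \mathbb{N}}$ pulls back, via the measurable correspondence, to an event for $\mathcal{D}$; because the backward dynamics of $(\tilde R_n^\infty)_{n \in \mathbb{N}}$ only delete the leaf labeled $n+1$ and its sibling and relabel, changing finitely many labels of the inputs changes the chain only in finitely many coordinates, hence a tail event is, up to null sets, invariant under all finitely-supported permutations $\sigma$; ergodicity of $\mathcal{D}$ then forces the event to have probability $0$ or $1$. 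For the converse ``trivial tail $\Rightarrow$ ergodic'', I would show that a finitely-supported-permutation-invariant event for $\mathcal{D}$ can be represented (again up to null sets) as a tail event of $(\bar R_n^\infty)_{n \in \mathbb{N}}$: invariance under the permutation that transposes $n+1$ with something far away lets one express membership in the event without reference to the first $m$ steps for any $m$, so the event lies in $\bigcap_m \sigma(\bar R_m^\infty, \bar R_{m+1}^\infty, \ldots)$ modulo null sets, whence triviality of the tail gives the conclusion.

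The main obstacle, and the step I would spend the most care on, is making precise the claim that a finitely-supported-permutation-invariant event of the exchangeable didendritic system coincides, modulo null sets, with a tail event of the unlabeled bridge, and vice versa. This is a Hewitt--Savage-type statement but it does not follow from the classical Hewitt--Savage zero--one law directly, because the objects are not i.i.d.\ sequences; rather one needs the observation that (a) relabeling the inputs by a permutation $\sigma$ fixing $\{1,\dots,N\}^c$ pointwise leaves $\tilde R_n^\infty$ for $n \ge N$ unchanged (exactly the stability property used in Example~\ref{E:zig-zag_path} via the Hewitt--Savage law), so the $\sigma$-field of tail events is contained in the $\sigma$-field of exchangeable (finitely-supported-invariant) events, and (b) conversely, an exchangeable event can be approximated in $L^1$ by events depending on finitely many coordinates of the labeled chain, and invariance under transpositions moving those coordinates out to infinity pushes the approximating events into the tail. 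Once this equivalence of the two $\sigma$-fields (modulo null sets) is nailed down, the proposition is immediate. Concretely I would cite the proof of Proposition~5.19 of \cite{MR3601650} for the details, noting per Remark~\ref{R:EGW2corr} that the direct argument there (not the one via Lemma~5.3/Remark~5.20) is the one to follow.

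\begin{proof}
See Proposition~5.19 of \cite{MR3601650}. We sketch the argument. By part~(i) of
Theorem~\ref{T:PATRICIA_i_ii} it suffices to show that the tail $\sigma$-field of the
infinite PATRICIA (equivalently, R\'emy) bridge $(\bar R_n^\infty)_{n \in \mathbb{N}}$ is almost
surely trivial if and only if the associated exchangeable random didendritic system
$\mathcal{D} = (\mathbb{N}, \equiv, \langle \cdot, \cdot \rangle, <_L, <_R, <)$ is ergodic.

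Recall the two-way measurable correspondence assembled in this section: from
$(\bar R_n^\infty)_{n \in \mathbb{N}}$ one builds the leaf-labeled chain
$(\tilde R_n^\infty)_{n \in \mathbb{N}}$ by uniform relabeling, the $\tilde R_n^\infty$ determine a
projective family of finite didendritic systems whose limit is $\mathcal{D}$ by
Lemma~\ref{L:projective}, and conversely Proposition~\ref{DDS_tree} recovers
$\tilde R_n^\infty$, and hence $\bar R_n^\infty$, from the restriction of $\mathcal{D}$ to $[n]$.
The backward transition of $(\tilde R_n^\infty)_{n \in \mathbb{N}}$ from time $n+1$ to time $n$ only
deletes the leaf labeled $n+1$ together with its sibling and relabels locally; consequently,
if $\sigma$ is a permutation of $\mathbb{N}$ with $\sigma(i) = i$ for all $i > N$, then the values
of $\tilde R_n^\infty$, and therefore of $\bar R_n^\infty$, for $n \ge N$ are unchanged if the
underlying labels are permuted by $\sigma$. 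Hence every tail event of
$(\bar R_n^\infty)_{n \in \mathbb{N}}$ is, modulo null sets, invariant under all finitely supported
permutations of $\mathbb{N}$ acting on $\mathcal{D}$; ergodicity of $\mathcal{D}$ then forces such an event
to have probability $0$ or $1$, giving triviality of the tail.

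For the converse, let $A$ be an event for $\mathcal{D}$ with
$\mathbb{P}(\{\mathcal{D} \in A\} \triangle \{\mathcal{D}^\sigma \in A\}) = 0$ for every finitely supported
permutation $\sigma$. Approximate $\mathbf{1}\{\mathcal{D} \in A\}$ in $L^1$ by a function of the
restriction of $\mathcal{D}$ to $[m]$, equivalently of $\tilde R_m^\infty$; applying a transposition
that carries the labels $\{1,\dots,m\}$ far out to infinity and using the invariance of $A$,
one sees that $\{\mathcal{D} \in A\}$ agrees up to null sets with an event in
$\sigma(\bar R_{M}^\infty, \bar R_{M+1}^\infty, \ldots)$ for every $M$, hence with a tail event
of $(\bar R_n^\infty)_{n \in \mathbb{N}}$. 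Triviality of that tail $\sigma$-field then yields
$\mathbb{P}\{\mathcal{D} \in A\} \in \{0,1\}$, so $\mathcal{D}$ is ergodic.
\end{proof}
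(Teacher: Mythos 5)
The paper does not prove Proposition~\ref{P:extremal_vs_ergodic} itself; it simply records that the statement is Proposition~5.19 of \cite{MR3601650}. Your proposal does the same, so at that level the approaches agree. But the extra sketch you append has a gap in the converse direction, and the gap is worth flagging because Remark~\ref{R:EGW2corr} in this very paper warns that a Hewitt--Savage-flavored argument (Lemma~5.3 / Remark~5.20 of \cite{MR3601650}) was found to be faulty and that the valid proof of Proposition~5.19 proceeds ``along completely different lines.''

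Your direction ``ergodic $\Rightarrow$ trivial tail'' is sound: a tail event, viewed in $\sigma(\bar R_n^\infty : n \ge K)$, is unchanged when $\mathcal{D}$ is replaced by $\mathcal{D}^\sigma$ for any $\sigma$ supported in $[K]$, because the \emph{unlabeled} trees $\bar R_n^\infty$ for $n \ge K$ are unaffected; so tail events are finitely-supported-invariant and ergodicity forces them to be trivial. The problem is the other direction. Your approximate-and-push-to-infinity step produces functions of $\mathcal{D}\big|_{\{M+1,\ldots,M+m\}}$, which are measurable with respect to $\sigma(\tilde R_n^\infty : n \ge M)$ --- the tail filtration of the \emph{labeled} chain --- but not with respect to $\sigma(\bar R_n^\infty : n \ge M)$: extracting the sub-didendritic-system spanned by the labels $M+1,\ldots,M+m$ requires knowing which leaves of the tree carry those labels. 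Thus the argument shows that a finitely-supported-invariant event lies in $\mathcal{T}(\tilde R^\infty) := \bigcap_M \sigma(\tilde R_n^\infty : n \ge M)$ modulo null sets, not in $\mathcal{T}(\bar R^\infty)$ as claimed. The inclusion $\mathcal{T}(\bar R^\infty) \subseteq \mathcal{T}(\tilde R^\infty)$ is automatic, but to conclude you need the reverse inclusion modulo null sets --- that the conditionally uniform labeling contributes no tail information --- and that is a nontrivial assertion requiring its own proof (for example, via a symmetrization argument showing $\mathbb{E}[\,\cdot\mid \tilde R_m^\infty]$ of a finitely-supported-invariant indicator depends only on $\bar R_m^\infty$, combined with a backward-martingale step, or via whatever the ``completely different'' argument of \cite{MR3601650} actually is). As written, the sketch silently conflates the two tail $\sigma$-fields, which is exactly the kind of slip the paper's Remark~\ref{R:EGW2corr} is cautioning against.
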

\begin{example}\label{E:zigzagDDS}
We return to Example~\ref{E:zig-zag_path} and give a concrete representation of the corresponding random didendritic system. Let $U_1, U_2,\ldots$ be independent and uniformly distributed on $[0,1]$, and let $\varepsilon_1, \varepsilon_2, \ldots$ be i.i.d. with values in $\{\curvearrowright, \curvearrowleft\}$ and with $\mathbb{P}\{\varepsilon_i=\curvearrowright\}=\mathbb{P}\{\varepsilon_i=\curvearrowleft\}=\frac12$. We define the equivalence relation $\equiv$ on $\mathbb{N}\times \mathbb{N}$ by declaring that 
\[(i,j) \equiv (k,k) \Longleftrightarrow i=j=k\]
and if $i\neq j$ and $k\neq \ell$ then
\[(i,j)\equiv(k,\ell) \Longleftrightarrow U_i\wedge U_j = U_k\wedge U_\ell.\]
The partial order $<$ is defined by declaring that 
\begin{itemize}
\item for $i,j,k \in \mathbb N$, \, $\langle i,j\rangle < \langle k,k\rangle \Longleftrightarrow i\neq j \; \mbox{ and }\; U_i\wedge U_j \le U_k$,
\item  for $i\neq j$, $k\neq \ell$, \, 
$\langle i,j\rangle < \langle k, \ell\rangle  \Longleftrightarrow  i\neq j \; \mbox{ and } \; U_i\wedge U_j < U_k\wedge U_\ell.$
\end{itemize}
We say that $\langle i,j\rangle <_L \langle k, \ell\rangle$ (respectively, $\langle i,j\rangle <_R \langle k, \ell\rangle$) if and only if $\langle i,j\rangle < \langle k, \ell\rangle$ and $\varepsilon_h = \curvearrowleft$ (respectively, $\varepsilon_h = \curvearrowright$), where 
\[ h = \begin{cases} i, \mbox { if }  U_i < U_j, \\ j, \mbox { if }  U_j < U_i. \end{cases}
\]
One can check that the restriction of this random didendritic system to the label set $[n]$ defines a random full binary tree which has the same distribution as the random full binary tree $\bar R_n^\infty$ in \ref{E:zig-zag_path}. The labeling of this tree by $[n]$ is clearly uniform, and the result of passing from the restriction to $[n+1]$ to the restriction to $[n]$ is given by the deterministic transformation described at the beginning of this section. Together this shows that the corresponding full binary tree-valued process is distributed as the labeled version of the PATRICIA bridge from Example~\ref{E:zig-zag_path}.

It is clear by the symmetry inherent in the construction that the above random didendritic system is exchangeable.
We already saw in Example  \ref{E:zig-zag_path} that the above PATRICIA bridge is extremal, and hence by Proposition~\ref{P:extremal_vs_ergodic} the exchangeable random didendritic system is ergodic. The ergodicity can also be seen directly from the observation that the restrictions of the random didendritic system to disjoint finite subsets of $\mathbb{N}$ are independent 
(see Remark~5.18 of \cite{MR3601650}).
\end{example}

\section{Infinite PATRICIA bridges and real trees}
\label{S:DDS_from_real_tree}
The first part of Theorem 8.2 of \cite{MR3601650} constructs an ergodic exchangeable random didendritic system  $\mathcal D$ 
(and hence an extremal infinite PATRICIA bridge) from an \mbox{$\mathbb{R}$-tree}~$\mathbf{S}$ with root $\rho$, a probability measure $\mu$ on $\mathbf{S}$, 
and a function $W: \mathbf{S} \times [0,1] \times \mathbf{S} \times [0,1] \to \{\curvearrowright, \curvearrowleft\}$. For the purposes of this construction, we first fix some notation. 
For $x, y \in \mathbf{S}$ let $x\curlywedge y$ denote that element  in the segment $[x,y]$ that is closest to the root $\rho$ of $\mathbf{S}$
(equivalently, $[\rho, x \curlywedge y] = [\rho, x] \cap [\rho,y]$).
 We say that $x \prec y$ if $x \in [\rho, y)$. 

The following properties of $\mathbf{S}$, $\rho$, $\mu$, and $W$ are essential:

\medskip
\noindent
{\em (T) Let $(\xi_n)_{n \in \mathbb{N}}$ be i.i.d. with common distribution $\mu$.  Then 
almost surely for distinct $i,j,k \in \mathbb{N}$, one of 
\[
\xi_i \curlywedge \xi_j = \xi_i \curlywedge \xi_k \prec \xi_j \curlywedge \xi_k,
\]
\[
\xi_j \curlywedge \xi_k = \xi_j \curlywedge \xi_i \prec \xi_k \curlywedge \xi_i,
\]
or
\[
\xi_k \curlywedge \xi_i = \xi_k \curlywedge \xi_j \prec \xi_i \curlywedge \xi_j
\]
holds. }

\medskip
\noindent
{\em (LR) For 
an independent sequence of i.i.d. $[0,1]$-valued random variables $(\vartheta_n)_{n \in \mathbb{N}}$ with common uniform distribution one has almost surely
\begin{itemize}
\item
for $i \ne j$, 
$W(\xi_i, \vartheta_i, \xi_j, \vartheta_j) = \curvearrowright$
if and only if
$W(\xi_j, \vartheta_j, \xi_i, \vartheta_i) = \curvearrowleft$;
\item
for distinct $i,j,k$,
if $\xi_i \curlywedge \xi_j = \xi_i \curlywedge \xi_k \prec \xi_j \curlywedge \xi_k$,
then $W(\xi_i ,\vartheta_i, \xi_j, \vartheta_j) = W(\xi_i ,\vartheta_i, \xi_k, \vartheta_k)$.
\end{itemize}}

\medskip

Assume (T) and (LR) hold.
Let $(\xi_i, \vartheta_i)$ be i.i.d. copies of a random variable with distribution $\mu\otimes\lambda$, where $\lambda$ is the uniform distribution on $[0,1]$.  Using the random input  $(\xi_i, \vartheta_i)_{i\in \mathbb N}$, we define 
\begin{itemize}
\item
the equivalence relation $\equiv^{\mathbf{S}}$ on $\mathbb N\times \mathbb N$ by declaring \\ for $i,k,\ell \in \mathbb N$ that $(i,i)\equiv^\mathbf{S}(k,\ell)$ if and only if $i=k=\ell$, \\ and for $i\neq j$, $k\neq \ell$, that $(i,j)\equiv^\mathbf{S}(k,\ell)$ if and only if $\xi_i \curlywedge \xi_j = \xi_k\curlywedge \xi_\ell$,
\item
the partial order $<^{\mathbf{S}}$ on the equivalence classes $\langle \cdot,\cdot\rangle^\mathbf{S}$ of $\equiv^{\mathbf{S}}$ by declaring\\
for $i,k,\ell \in \mathbb N$ that $\langle i,j\rangle^\mathbf{S} <^{\mathbf{S}} \langle k,k\rangle^\mathbf{S}$ if and only if $i \neq j$ and $\xi \curlywedge \xi_j \preccurlyeq \xi_k$\\
and for $i\neq j$, $k\neq \ell$, that $\langle i,j \rangle <^\mathbf{S} \langle k,\ell \rangle$ if and only if $\xi_i \curlywedge \xi_j \prec \xi_k\curlywedge \xi_\ell$,
\item the mappings $w: \{(i,j): i,j\in \mathbb N, i\neq j\} \to \{\curvearrowright, \curvearrowleft\}$ by putting \\ $w(i,j) := W(\xi_i ,\vartheta_i, \xi_j, \vartheta_j)$.
\end{itemize}

Proposition~\ref{P:left_right} then extends $(\mathbb{N}, \, \equiv^{\mathbf{S}}, \, <^{\mathbf{S}}, \,w)$ into  a  random didendritic system $\mathcal D^{\mathbf{S}} =(\mathbb N, \, \equiv^{\mathbf{S}}, \, \langle \cdot, \cdot \rangle^{\mathbf{S}}, \, {<_L}^{\mathbf{S}}, \, {<_R}^{\mathbf{S}}, \, <^{\mathbf{S}})$. The exchangeability of $\mathcal D^\mathbf{S}$ is clear because the random variables $(\xi_i, \vartheta_i)$, $i \in \mathbb N$, are independent and identically distributed. 
The ergodicity of $\mathcal D$ holds because
 the restrictions of the random didendritic system to disjoint finite subsets of $\mathbb{N}$ are independent 
(see Remark 5.18 of \cite{MR3601650}).  
 
 We can now formulate the assertion of Theorem 8.2 of \cite{MR3601650} in a still more explicit and ``constructive'' manner.
 \begin{theorem}\label{T:main}
Let $\mathbf{S}$ be a complete separable  \mbox{$\mathbb{R}$-tree} with root $\rho$,  $\mu$ be a probability measure on $\mathbf{S}$, and  $W$ be a Borel measurable function from $\mathbf{S} \times [0,1] \times \mathbf{S} \times [0,1]$ to \mbox{$\{\curvearrowright, \curvearrowleft\}$}.  Suppose that the properties (T) and (LR) hold. Then $\mathcal D^{\mathbf{S}} =(\mathbb N, \, \mbox{$\equiv^{\mathbf{S}},$} \, \langle \cdot, \cdot \rangle^{\mathbf{S}}, \, {<_L}^{\mathbf{S}}, \, {<_R}^{\mathbf{S}}, \, <^{\mathbf{S}})$ is an ergodic exchangeable random didendritic system. Conversely, for any   ergodic exchangeable random didendritic system $\mathcal D$ there exists a $4$-tuple $(\mathbf{S}, \rho, \mu, W)$ with the abovementioned properties such that $\mathcal D^{\mathbf{S}}$ has the same distribution as $\mathcal D$.
 \end{theorem}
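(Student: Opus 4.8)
The plan is to treat the two implications separately: the forward one is a direct verification using the machinery already assembled in Sections~\ref{S:DDS} and~\ref{S:DDS_from_real_tree}, while the converse is the substantive part and is in essence a restatement of the representation half of Theorem~8.2 of \cite{MR3601650}. For the forward direction I would fix $n$, restrict the data $(\mathbb{N},\equiv^{\mathbf{S}},<^{\mathbf{S}},w)$ to $[n]$, and check the hypotheses of Proposition~\ref{P:left_right}. Property~(A) is immediate from $\xi_i\curlywedge\xi_j=\xi_j\curlywedge\xi_i$, and (after the routine observations that $\equiv^{\mathbf{S}}$ is an equivalence relation and that $<^{\mathbf{S}}$ is a well-defined strict partial order on its classes) the triplet property~(C) for distinct $i,j,k$ is exactly the trichotomy in assumption~(T), translated through the definitions of $\equiv^{\mathbf{S}}$ and $<^{\mathbf{S}}$. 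Property~(B') holds because $\xi_i\curlywedge\xi_j\preccurlyeq\xi_i$ always, while the $\mu$-almost-sure distinctness of $\xi_i$ and $\xi_j$ for $i\ne j$ --- which is forced by~(T), since a positive-probability coincidence would defeat the trichotomy for a suitable triple, so that in particular $\mu$ must be diffuse --- upgrades this to strict $\prec$. Properties~(B'') and~(E') are precisely the two bullet points of assumption~(LR). Proposition~\ref{P:left_right} then promotes the restriction to a didendritic system on $[n]$; since restricting the input $(\xi_i,\vartheta_i)_{i\in\mathbb{N}}$ commutes with the construction of the orders, the uniqueness clause of Proposition~\ref{P:left_right}, together with Lemma~\ref{L:restriction}, shows that these systems form a projective family, and Lemma~\ref{L:projective} glues them into $\mathcal{D}^{\mathbf{S}}$. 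Exchangeability is immediate from the fact that $(\xi_i,\vartheta_i)_{i\in\mathbb{N}}$ is i.i.d.\ and the construction is equivariant under relabeling; ergodicity follows because the restrictions of $\mathcal{D}^{\mathbf{S}}$ to disjoint finite subsets of $\mathbb{N}$ are functions of disjoint, hence independent, blocks of the input, so a Hewitt-Savage-type zero-one law applies exactly as in Remark~5.18 of \cite{MR3601650}.

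For the converse I would invoke the representation half of Theorem~8.2 of \cite{MR3601650} and argue that the construction of Section~\ref{S:DDS_from_real_tree} reproduces it. The scheme is: from the ergodic exchangeable random didendritic system $\mathcal{D}$ one reads off, via Proposition~\ref{DDS_tree} and Lemma~\ref{L:restriction}, the sequence of leaf-labeled full binary trees $\tilde R_n^\infty$ and hence the associated PATRICIA (equivalently, R\'emy) bridge, whose tail $\sigma$-field is trivial by Proposition~\ref{P:extremal_vs_ergodic}. One then invokes a de Finetti/Aldous-Hoover-type representation for the ergodic exchangeable hierarchy underlying $\mathcal{D}$, which furnishes a complete separable $\mathbb{R}$-tree $\mathbf{S}$ with root $\rho$, a diffuse probability measure $\mu$ on $\mathbf{S}$, and i.i.d.\ $\mathbf{S}$-valued variables $\xi_i$ with common law $\mu$ such that the random meet structure $(\mathbb{N},\equiv^{\mathbf{S}},<^{\mathbf{S}})$ it produces agrees in law with the $\equiv$-and-$<$ part of $\mathcal{D}$; the residual left/right information of $\mathcal{D}$ is then captured by adjoining independent uniforms $\vartheta_i$ and choosing a Borel function $W$ so that $w(i,j):=W(\xi_i,\vartheta_i,\xi_j,\vartheta_j)$ recovers the $<_L,<_R$ structure. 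Finally one checks that $(\mathbf{S},\rho,\mu,W)$ satisfies~(T) and~(LR) and that feeding it back through the recipe of Section~\ref{S:DDS_from_real_tree} --- at the last step through Proposition~\ref{P:left_right} --- yields a didendritic system with the same law as $\mathcal{D}$. All of this is the content of Theorem~8.2 of \cite{MR3601650}, so the real work here is to match the notation of Section~\ref{S:DDS_from_real_tree} with that of Section~8 of \cite{MR3601650}.

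I expect the main obstacle to lie in the converse, specifically in passing from the abstract ergodic exchangeable hierarchy carried by $\mathcal{D}$ to a genuine complete separable $\mathbb{R}$-tree: one must show that the combinatorial meet structure is faithfully realizable as a sample from such a tree (which requires the Aldous-Hoover machinery together with care about completeness and about the diffuseness of $\mu$), and that the additional left/right data is jointly measurable in a way that lets it be recovered from a single Borel function $W$ of the sampled points and the auxiliary uniforms. Since all of this has been carried out in \cite{MR3601650}, the role of Theorem~\ref{T:main} is chiefly to record that the explicit construction of Section~\ref{S:DDS_from_real_tree} is a faithful restatement of that result, and the proof accordingly reduces to verifying that translation.
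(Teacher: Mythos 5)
Your proposal is correct and follows essentially the same route as the paper: the forward direction verifies the hypotheses of Proposition~\ref{P:left_right} (with (T) giving the triplet property and (LR) giving (B\textquotesingle\textquotesingle) and (E\textquotesingle)), passes to a projective limit via Lemmas~\ref{L:restriction} and~\ref{L:projective}, and gets exchangeability and ergodicity from the i.i.d.\ structure of the input, while the converse delegates to Sections~6--8 of \cite{MR3601650} exactly as the paper's own ``brief review'' does. One small inaccuracy worth noting: property~(B\textquotesingle) holds automatically by the definition of $<^{\mathbf{S}}$ (which uses $\preccurlyeq$, not $\prec$, when comparing an interior class to a leaf class), so diffuseness of $\mu$ is not actually needed for (B\textquotesingle) --- though your separate observation that (T) forces $\mu$ to be diffuse is correct and relevant elsewhere.
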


 In short:  The construction described at the beginning of the section builds 
an ergodic exchangeable random didendritic system
(and hence an
extremal infinite PATRICIA bridge) from a rooted $\mathbb R$-tree endowed with a sampling measure $\mu$ and a ``left-right prescription'' $W$ that obey the consistency properties (T) and (LR).  
Conversely, 
any ergodic exchangeable random didendritic system
(and hence any extremal infinite PATRICIA bridge)
arises from such a construction.

The first part of Theorem~\ref{T:main} and its proof has already been explained. 
We now briefly review the proof of the second part.
The arguments in Section~6 of \cite{MR3601650} construct from a given ergodic exchangeable random didendritic system $\mathcal{D} = (\mathbb{N}, \, \equiv, \, \langle \cdot, \cdot \rangle, \, <_L, \, <_R, \, <)$  a~complete separable (ultrametric) $\mathbb{R}$-tree $\mathbf T$ with a distinguished point $\rho \in \mathbf T$, and with an injective mapping from the set of equivalence classes $\{\langle i,j\rangle: i,j \in \mathbb N\}$  into~$\mathbf T$ such that the partial order on~$\mathbf T$ defined by the root $\rho$ extends the partial order $<$ on~$\{\langle i,j\rangle: i,j \in \mathbb N\}$.  The  {\em core} of $\mathbf T$, denoted by $\Gamma(\mathbf T)$, is the closure of the set of points of attachment  the leaves of $\mathbf T$. Here, $\Pi(x)$, the point of attachment of a leaf $x$, equals $x$ if the  leaf is not isolated, whereas for an isolated leaf it is such that the line segment $[x, \Pi(x))$ is the maximal one among all line segments $[x,y)$ that arise as intersections of $\mathbf T$ with open balls centered at $x$. 
Now Proposition~7.4 of \cite{MR3601650}  constructs on the complete separable $\mathbb{R}$-tree $\mathbf{S}:= \Gamma(\mathbf T)$ with root $\rho$ a diffuse probability measure $\mu$ having property (T), and such that $(\mathbb{N}, \, \equiv, \langle \cdot, \cdot \rangle \, \, <)$ has the same distribution as $(\mathbb{N}, \, \equiv^{\mathbf{S}}, \, \langle \cdot, \cdot \rangle^{\mathbf{S}}, \, <^{\mathbf{S}})$.

In Section~8 of \cite{MR3601650}, using the Aldous-Hoover-Kallenberg theory on exchangeable random arrays, a Borel measurable function $W: \mathbf{S} \times [0,1] \times \mathbf{S} \times [0,1] \to \{\curvearrowright, \curvearrowleft\}$ is constructed which has the above property (LR), and which is such that the resulting ergodic exchangeable random didendritic system 
$\mathcal D$ has the same distribution as 
the resulting ergodic exchangeable random didendritic system
$\mathcal D^{\mathbf{S}}$.

\begin{example}\label{E:zigzagfinish}
We continue Examples  \ref{E:zig-zag_path} and \ref{E:zigzagDDS}. Here, the ultrametric $\mathbb{R}$-tree $\mathbf T$ may be taken to be the interval $[0, \frac 12]$, along with disjoint segments of length $\frac 12(1-U_i)$ attached to the points $\xi_i = \frac 12 U_i$, and with the root $\rho = 0$. This is the same description of $\mathbf T$ as in Example 6.7 of \cite{MR3601650}, except that the roles of $0$ and $\frac12$ have been interchanged, in order to tie in with the way the order $<$  is constructed in Example~\ref{E:zigzagDDS}. The core $\mathbf{S} = \Gamma(\mathbf T)$ is then the interval $[0, \frac 12]$, and the sampling measure $\mu$ is the uniform distribution on this interval. The prescription of ``left versus right'' is then determined by the function
\[
W(x,s,y,t) 
= 
\begin{cases}
\curvearrowright,& \quad \text{if $x < y$ and $s < \frac{1}{2}$}, \\
\curvearrowleft,& \quad \text{if $x < y$ and $s > \frac{1}{2}$}, \\
\curvearrowright,& \quad \text{if $y < x$ and $t < \frac{1}{2}$}, \\
\curvearrowleft,& \quad \text{if $y < x$ and $t > \frac{1}{2}$}, \\
\curvearrowleft,& \quad \text{otherwise}.
\end{cases}
\]
\end{example}
\begin{example}
We know from Remark~\ref{R:ePb} that $(\prescript{\nu}{}{\bar R}_n)_{n \in \mathbb{N}}$ is an extremal infinite PATRICIA bridge for each each diffuse probability measure $\nu$ on $\{0,1\}^\infty$. Here the ultrametric $\mathbb{R}$-tree $\mathbf T$ may be taken as follows (cf. Examples 6.8 \& 8.4 of \cite{MR3601650}):
Take the complete binary tree $\{0,1\}^*$, join two elements of the form
$v_1 \ldots v_k$ and $v_1 \ldots v_k v_{k+1}$ with a segment of length
$\frac{1}{2^{k+2}}$, and let  $\mathbf T$ be the completion of this $\mathbb{R}$-tree. The root  $\rho \in \mathbf T$  is the point corresponding to the root $\emptyset \in \{0,1\}^*$. The core $\mathbf{S}=\Gamma(\mathbf T)$ is just $\mathbf T$ itself.
There is a bijective correspondence between $\{0,1\}^\infty$ and the points ``added'' in passing to the completion. The sampling measure $\mu$ on $\mathbf{S}$ is identified via this correspondence with the probability measure $\nu$ on $\{0,1\}^\infty$. Let $x$ and $y$ be two points in the support of
$\mu$ that correspond to the points $u$ and $v$ in $\{0,1\}^\infty$.  The most recent common ancestor of $x$ and $y$ in $\mathbf T$ (that is, the point $z$ such that $[\rho,z] = [\rho,x]\cap[\rho,y]$) is the point in $\mathbf T$ corresponding to the most recent common ancestor of $u$ and $v$ in $\{0,1\}^*$. The ``left versus right'' rule  is then given by 
\[
W(x,s,y,t) 
= 
\begin{cases}
\curvearrowright,& \quad \text{if $u\wedge v  <_L u$ and $u\wedge v  <_R v$}, \\
\curvearrowleft,& \quad \text{if $u\wedge v  <_L v$ and $u\wedge v  <_R u$}.
\end{cases}
\]

\end{example}
\bigskip
\noindent
{\bf Acknowledgement.} We thank the referee of \cite{MR3734107} for suggesting 
that we extend our analysis of the radix sort chain to the PATRICIA chain and
the referee of this paper for helpful and stimulating comments. We are grateful
to Ralph Neininger for sharing his example in Section~\ref{S:heights}.
We acknowledge the hospitality of  the Institute for Mathematical Sciences, 
National University of Singapore, where part of this paper was written.

\def\cprime{$'$}

\end{document}